\crefname{section}{Section}{Sections}
\crefname{subsection}{\S}{\S\S}
\theoremstyle{plain}
\newtheorem{lemma}{Lemma}[section]
\newtheorem{proposition}[lemma]{Proposition}
\newtheorem{theorem}[lemma]{Theorem}
\theoremstyle{nonumberplain}
\theoremstyle{plain}
\newtheorem{definition}[lemma]{Definition}
\newtheorem{example}[lemma]{Example}
\newtheorem{remark}[lemma]{Remark}
\crefname{definition}{definition}{definitions}
\crefname{ex}{example}{examples}
\crefname{remark}{remark}{remarks}
\crefname{convention}{convention}{conventions}
\crefname{claim}{claim}{claims}
\crefname{conjecture}{conjecture}{conjectures}
\crefname{lemma}{lemma}{lemmas}
\crefname{proposition}{proposition}{propositions}
\crefname{question}{question}{questions}
\crefname{corollary}{corollary}{corollaries}
\crefname{theorem}{theorem}{theorems}
\crefname{assumption}{assumption}{Assumptions}
\crefname{equation}{}{}
\theoremstyle{nonumberplain}
\newtheorem{proof}{Proof}
\def\polhk#1{\setbox0=\hbox{#1}{\ooalign{\hidewidth
    \lower1.5ex\hbox{`}\hidewidth\crcr\unhbox0}}}
\newcommand{\bes}{\begin{equation*}}
\newcommand{\ees}{\end{equation*}}
\newcommand{\be}{\begin{equation}}
\newcommand{\ee}{\end{equation}}
\begin{document}

\baselineskip=15pt

\title{Gluing topological graph C*-algebras}
\author{
  Atul Gothe, John Quigg, and Mariusz Tobolski
}

\date{}

\newcommand{\Addresses}{{
  \bigskip
  \footnotesize  
  
  \textsc{Instytut Matematyczny Polskiej Akademii Nauk, ul. \'Sniadeckich 8, 00-656 Warszawa, Poland}\par\nopagebreak
  \textsc{Instytut Matematyki, Uniwersytet Warszawski, ul. Stefana Banacha 2, 02-097 Warszawa, Poland}\par\nopagebreak
  \textit{E-mail adress}:
  \texttt{a.gothe@uw.edu.pl}
  
  \bigskip

  \textsc{School of Mathematical and Statistical Sciences, Arizona State University, Tempe, AZ 85284}\par\nopagebreak \textit{E-mail address}:
  \texttt{quigg@asu.edu}

\bigskip

  \textsc{Instytut Matematyczny, Uniwersytet Wroc\l{}awski, pl. Grunwaldzki 2/4, 50-384 Wroc\l{}aw, Poland}\par\nopagebreak \textit{E-mail address}:
  \texttt{mariusz.tobolski@math.uni.wroc.pl}
}}

\maketitle

\begin{abstract}
We introduce regular closed subgraphs of Katsura's topological graphs and use them to generalize the notion of an adjunction space from topology. Our construction attaches a~topological graph onto another via a regular factor map. We prove that under suitable assumptions the C*-algebra of the adjunction graph is a pullback of the C*-algebras of the topological graphs being glued. Our results generalize certain pushout-to-pullback theorems proved in the context of discrete directed graphs. Our theorem applied to homeomorphism C*-algebras recovers a special case of the well-known result stating that pullbacks of $\mathbb{Z}$-C*-algebras induce pullbacks of the respective crossed product C*-algebras. 
Furthermore, we show that the C*-algebras of odd-dimensional quantum balls of Hong and Szyma\'nski (which are known not to be graph C*-algebras) are topological graph C*-algebras and we recover the pullback structure of C*-algebras of odd-dimensional quantum spheres by gluing the topological graphs associated to the C*-algebras of the corresponding odd-dimensional quantum balls.
\end{abstract}

\noindent {\em Key words: topological graph, Cuntz--Pimsner algebra, factor map, pullback C*-algebra, quantum sphere, quantum ball} 

\vspace{.5cm}

\noindent{MSC: 46L05, 46L55, 46L85}

\section{Introduction}

The idea of understanding complex objects by analyzing their simpler components is pervasive in mathematics. In topology and differential geometry, many interesting spaces  are constructed by gluing together well-understood spaces.  For example, CW-complexes are formed by attaching topological balls of various dimensions in a controlled manner. Such constructions are considered tractable because many of the resulting spaces' features can be derived from the corresponding properties of their constituent parts. In particular, the homology and K-theory groups of adjunction spaces (or one-injective pushouts) can be obtained from those of the spaces being glued, using tools such as the Mayer--Vietoris sequence. In the theory of C*-algebras, often called noncommutative topology, the aforementioned gluing constructions translate to pullback diagrams via the Gelfand--Naimark duality. Remarkably, both the K-theory and  the Mayer--Vietoris six-term exact sequence remain applicable in this setting. 

In recent years, the following problem in the field of graph C*-algebras gained attention: Let $E$ be a directed graph obtained by a gluing construction. Under which conditions is the associated C*-algebra $C^*(E)$ a~pullback C*-algebra? We call results of this kind {\em pushout-to-pullback theorems}. The question was motivated by examples due to Hong and Szymański~\cite{hs-02} unraveling the quantum CW-complex structure (in the sense of~\cite{dhmsz-20}) of C*-algebras of even-dimensional quantum spheres. Conditions permitting the pullback structure of C*-algebras of unions of graphs without breaking vertices were given by Hajac, Reznikhoff, and the third author in~\cite{hrt-20}. Brooker and Spielberg proved the result in a~more general setting of unions of relative graphs (incorporating breaking vertices) in~\cite{bs-24}. Then, Hajac and the third author proved a general pushout-to-pullback theorem going beyond unions of graphs~\cite{th-24}. Results of the same kind were considered also for higher-rank graph C*-algebras~\cite{kpsw-16} and Cuntz--Pimsner algebras~\cite{rs-11}.

In this paper, we extend the above framework to the setting of topological graphs in the sense of Katsura~\cite{katsura1}, which generalize directed graphs by permitting arbitrary locally compact Hausdorff topologies on their vertex and edge spaces. We introduce the notion of {\em adjunction topological graphs}, inspired by the topological concept of adjunction spaces, to serve as analogues of pushouts in the category of graphs. Our construction involves attaching one topological graph to another along a~closed subgraph via a regular factor map, as defined by Katsura~\cite{katsura2}. The main theorems establish sufficient conditions under which this gluing process induces a pullback diagram at the level of associated topological graph C*-algebras. Our results generalize some existing theorems concerning graph C*-algebras found in the literature.

Beyond the general interest in extending known results, this project is motivated by developments in noncommutative topology and the theory of $q$-deformed spaces. In the case of even-dimensional quantum spheres, prior work has shown that gluing constructions for graphs correspond strikingly well with the quantum CW-complex structures of their C*-algebras. Motivated by this insight, our aim was to establish an analogous correspondence for odd-dimensional quantum spheres. A central challenge in this context is that the fundamental building blocks, namely, the C*-algebras of odd-dimensional quantum balls, cannot be realized as graph C*-algebras. In this paper, we show that these C*-algebras are in fact isomorphic to topological graph C*-algebras arising from explicit topological graphs constructed via a suspension procedure. This result enables a visualization of the quantum CW-complex structure of odd-dimensional quantum spheres through the lens of topological graphs, which forms the principal application of our work.

The introduction of non-discrete topologies naturally gives rise to more intricate gluing constructions. These, in turn, lead to a pushout-to-pullback theorem that may offer greater utility than in the purely discrete setting. For example, in the case of directed graphs, the K-theory of graph C*-algebras can be effectively computed using the Raeburn--Szymański theorem~\cite{rsz-04} (see also~\cite{kprr-97}), which provides explicit formulas in terms of the graph's adjacency matrix. This approach is so efficient that there is typically no need to invoke the Mayer--Vietoris six-term exact sequence arising from a known pullback structure. In contrast, for topological graphs, the situation is more subtle. The K-theory of a topological graph C*-algebra $C^*(E)$ can still be computed via a Pimsner--Voiculescu six-term exact sequence, which involves the K-theory of commutative C*-algebras associated to the vertex space and its subspace of regular vertices. However, this method lacks the algorithmic simplicity of the adjacency matrix approach. Consequently, alternative computational tools might become desirable in certain cases. In particular, a detailed understanding of the gluing structure of a topological graph may yield new insights and computational advantages. 

In the preliminaries, we state the basic definitions and facts regarding the Cuntz--Pimsner algebras and topological graphs needed in the sequel. In Section~3, we define regular closed subgraphs and then use them to introduce the notion of an adjuction topological graph enabling us to glue topological graphs. Then, we study various regularity conditions one might require from such gluings. Section~4 contains the pushout-to-pullback theorem for topological graphs and a discussion on how it generalizes certain results for directed graphs. Finally, in Section~5, we exemplify the main result by studying two classes of examples: homeomorphism C*-algebras and odd-dimensional quantum spheres.


\section{Preliminaries}

A {\em (strongly continuous) action} of the circle group $\mathbb{T}$ on a C*-algebra $A$ is a group ho\-mo\-mor\-phism $\alpha:\mathbb{T}\to {\rm Aut}(A)$ such that the map $\mathbb{T}\times A\ni (g,a)\mapsto\alpha_g(a):=\alpha(g)(a)\in A$ is continuous. A~C*-algebra equipped with a $\mathbb{T}$-action is called a {\em $\mathbb{T}$-C*-algebra}. Now, let $A$ and $B$ be $\mathbb{T}$-C*-algebras equipped with actions $\alpha$ and $\beta$, respectively. A $*$-homomorphism $\pi:A\to B$ is called {\em $\mathbb{T}$-equivariant} if $\pi\circ \alpha_g=\beta_g\circ \pi$ for all $g\in G$.

\subsection{Cuntz--Pimsner algebras}

General theory of Hilbert modules and C*-correspondences 
can be found in~\cite{enchilada}. The pioneering idea of 
associating C*-algebras to C*-correspondences is due to 
Pimsner~\cite{pimsner}. Here we follow~\cite{katsura0}.

A {\em C*-correspondence} over a C*-algebra $A$ is a 
(right) Hilbert $A$-module $X$ together with a~left action 
implemented by a $*$-homomorphism 
$\varphi_X:A\to \mathcal{L}(X)$, where $\mathcal{L}(X)$ is 
the C*-algebra of adjointable operators on $X$. We will often write 
$a\cdot \xi$ to denote $\varphi_X(a)(\xi)$, where $a\in A$ 
and $\xi\in X$. Next, consider the set $\mathcal{K}(X):=\overline{\rm span}\{\theta_{\xi,\eta}~:~\xi,\eta\in X\}$, where
\[
\theta_{\xi,\eta}(\zeta):=\xi\cdot\langle\eta,\zeta\rangle,\qquad \zeta\in X.
\]
Then $\mathcal{K}(X)$ is a closed ideal of $\mathcal{L}(X)$ and elements of $\mathcal{K}(X)$ are called {\em compact operators on $X$}. 
\begin{definition}\label{correp}
(\cite[Definition~2.1]{katsura0})
A~{\em representation} of a C*-correspondence $X$ over $A$ on a C*-algebra $B$ is a pair $(\psi^1,\psi^0)$ consisting of a linear map $\psi^1:X\to B$ and a $*$-homomorphism $\psi^0:A\to B$ satisfying
\begin{enumerate}
\item[(R1)] $\psi^1(a\cdot \xi)=\psi^0(a)\psi^1(\xi)$ for all $a\in A$ and $\xi\in X$.
\item[(R2)] $\psi^0(\langle\xi,\eta\rangle)=\psi^1(\xi)^*\psi^1(\eta)$ for all $\xi,\eta\in X$.
\end{enumerate} 
\end{definition}
For a C*-correspondence $X$ over $A$, the Katsura ideal of $A$ is given by
\[
J_X:=\{a\in A~:~\varphi_X(a)\in\mathcal{K}(X)\text{ and }ab=0\text{ for all $b\in\ker\varphi$}\}.
\]
Next, for every representation $(\psi^1,\psi^0)$ of $X$ on $B$, we define a $*$-homomorphism $\psi^{(1)}:\mathcal{K}(X)\to B$ by the formula 
\[
\psi^{(1)}(\theta_{\xi,\eta}):=\psi^1(\xi)\psi^1(\eta)^*
\]
for all $\xi,\eta\in X$.
\begin{definition}(\cite[Definition~3.4]{katsura0})
A representation $(\psi^1,\psi^0)$ of a C*-correspondence $X$ over $A$ on a C*-algebra $B$ is {\em covariant} if $\psi^0(a)=\psi^{(1)}(\varphi_X(a))$ for all $a\in J_X$. 
\end{definition}

\begin{definition}(\cite[Definition~3.5]{katsura0}, cf.~\cite{pimsner})
The {\em Cuntz--Pimsner algebra} $\mathcal{O}_X$ associated to a correspondence $X$ over $A$ is the C*-algebra $C^*(t^1_X,t^0_X)$ generated by the universal covariant representation $(t^1_X,t^0_X)$ of $(X,A)$. 
\end{definition}
By universality, for every covariant representation $(\psi^1,\psi^0)$ of $X$ on $B$, there is a~unique $*$-ho\-mo\-mor\-phism $\psi:\mathcal{O}_X\to B$ onto the C*-algebra $C^*(\psi^1,\psi^0)$ (generated by the images of $\psi^1$ and $\psi^0$) such that $\psi\circ t^1_X=\psi^1$ and $\psi\circ t^0_X=\psi^0$. Next, the formulas
\begin{equation}\label{cuntzgauge}
\alpha_z(t^0_X(a)):=t^0_X(a),\qquad \alpha_z(t^1_X(\xi)):=zt^1_X(\xi),\qquad z\in\mathbb{T},\quad a\in A,\quad \xi\in X,
\end{equation}
give rise to an action $\alpha:\mathbb{T}\to{\rm Aut}(\mathcal{O}_X)$, called the~{\em gauge action}. A  $*$-homomorphism between Cuntz--Pimsner algebras which is $\mathbb{T}$-equivariant with respect to gauge actions is called {\em gauge-equivariant}. If $I$ is an ideal of $\mathcal{O}_X$ such that $\alpha_z(I)\subseteq I$ for all $z\in\mathbb{T}$, then $I$ is {\em gauge-invariant}.

\subsection{Topological graphs}

We refer the reader to~\cite{katsura1,katsura2,katsura3} for an extensive study of topological graphs and their C*-algebras (note, however, that we use a different source-range convention). A {\em topological graph} $E$ is a~quadruple $(E^0,E^1,s_E,r_E)$, where $E^0$ and $E^1$ are locally compact Hausdorff spaces and $s_E,r_E:E^1\to E^0$ are continuous maps such that $r_E$ is a local homeomorphism. We call $E^0$ the {\em vertex space}, $E^1$ the {\em edge space}, and $s_E$ and $r_E$ the {\em source} and the {\em range} map, respectively.

Let us recall the construction of the C*-algebra of a topological graph. Let $A_E:=C_0(E^0)$ and for continuous functions $\xi,\eta\in C(E^1)$ define the following function on $E^0$:
\begin{equation}\label{inner}
\langle \xi,\eta\rangle(v):=\sum_{e\in r^{-1}_E(v)}\overline{\xi(e)}\eta(e).
\end{equation}
Next, define the following subspace of the space of bounded functions on $E^1$:
\[
X_E:=\left\{\xi\in C_b(E^1)~:~\langle\xi,\xi\rangle\in A_E\right\}.
\] 
Then $(X_E,A_E)$ is a C*-correspondence with the $A_E$-valued inner-product~\eqref{inner} and the actions
\[
(\xi\cdot f)(e):=\xi(e)f(r_E(e)),\qquad (f\cdot\xi)(e):=f(s_E(e))\xi(e),\qquad f\in A_E,\quad \xi\in X_E.
\]
The {\em C*-algebra $C^*(E)$ of a topological graph $E$} is the Cuntz--Pimsner algebra of the C*-cor\-re\-spon\-dence $X_E$ over $A_E$~(see~\cite[Section~2]{katsura1}).  The class of C*-algebras of topological graphs includes graph C*-algebras and homeomorphism C*-algebras. 
\vspace*{2mm}

\noindent{\bf Notation}. In what follows, let $\varphi_E$ denote the map defining the left action on $X_E$ and let $(t_E^1,t_E^0)$ denote the universal covariant representation.
\vspace*{2mm}

One distinguishes the following subsets of the vertex space $E^0$:
\begin{align*}
E_{\rm fin}^0&:=\{v\in E^0~:~\text{there is a neighborhood $V$ of $v$ such that $s_E^{-1}(V)$ is compact}\},\\
E^0_{\rm inf}&:=E^0\setminus E^0_{\rm fin},\qquad E_{\rm sink}^0:=E^0\setminus \overline{s_E(E^1)},\\
E_{\rm reg}^0&:=E^0_{\rm fin}\setminus \overline{E^0_{\rm sink}},\qquad E_{\rm sing}^0:=E^0\setminus E^0_{\rm reg}.
\end{align*}
The elements of $E_{\rm sink}^0$ are called {\em sinks}, the elements of $E^0_{\rm inf}$ are called {\em infinite emitters}, the elements of $E_{\rm reg}^0$ are called {\em regular}, and those of $E^0_{\rm sing}$ are called {\em singular}. Recall that a topological graph $F$ is called {\em row-finite} if $s_F(F^1)=F^0_{\rm reg}$~\cite[Definition~3.22]{katsura3}. Note that this notion generalizes row-finiteness of discrete graphs~(see, e.g.~\cite{bprs00}) but is not equivalent to the condition $F^0=F^0_{\rm fin}$ for general topological graphs (see Examples~3.21 and~3.23 in~\cite{katsura3}).

A subset $Y^0$ of $E^0$ is {\em positively invariant} if $r_E(e)\in Y^0$ implies $s_E(e)\in Y^0$ for every $e\in E^1$ and {\em negatively invariant} if for all $v\in Y^0\cap E^0_{\rm reg}$ there is $e\in E^1$ satisfying $s_E(e)=v$ and $r_E(e)\in Y^0$. We say that $Y^0$ is {\em invariant} if it is both positively and negatively invariant.
\begin{definition}
(\cite[Definition~2.3]{katsura3})
An {\em admissible pair} $\rho$ is a pair $(Y^0,Z^0)$ of closed subsets of $E^0$ such that $Y^0$ is invariant and $Y^0_{\rm sing}\subseteq Z^0\subseteq E^0_{\rm sing}\cap Y^0$.
\end{definition}

Let $I$ be a gauge-invariant ideal of $C^*(E)$. By~\cite[Proposition~2.8]{katsura3}, there is an admissible pair $\rho_I=(Y^0_I,Z^0_I)$ given by
\begin{align}\label{admideal}
Y^0_I&:=\{v\in E^0~:~f(v)=0\text{ for all }f\in C_0(E^0)\text{ such that }t^0_E(f)\in I\},\\
Z^0_I&:=\{v\in E^0~:~f(v)=0\text{ for all }f\in C_0(E^0)\text{ such that }t^0_E(f)\in I+(t^1_E)^{(1)}(\mathcal{K}(X_E))\}.\nonumber
\end{align}
Conversely, given an admissible pair $\rho=(Y^0,Z^0)$, there exists an associated gauge-invariant ideal $I_\rho$  (see~\cite[Proposition~3.5]{katsura3}). We do not need the invoke the explicit form of this ideal here. Instead, we will rely on the following theorem.

\begin{theorem}\label{idealclass}
{\rm (\cite[Theorem~3.19]{katsura3})}
For a topological graph $E$, the assignment $I\mapsto \rho_I=(Y^0_I,Z^0_I)$ and $\rho=(Y^0,Z^0)\mapsto I_\rho$ give rise to an inclusion-reversing one-to-one correspondence between the set of gauge-invariant ideals of $C^*(E)$ and the set of admissible pairs in $E^0$.
\end{theorem}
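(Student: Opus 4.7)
The plan is to establish the correspondence in both directions via a gauge-invariant uniqueness theorem for (relative) topological-graph C*-algebras, following the strategy Katsura pioneered for ordinary graphs and Cuntz--Pimsner algebras.

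First, I would construct $I_\rho$ explicitly. Given an admissible pair $\rho=(Y^0,Z^0)$, positive invariance of $Y^0$ makes $U^0:=E^0\setminus Y^0$ behave like the vertex space of a ``restriction'' subgraph and lets one form a restriction C*-correspondence, while $Z^0$ encodes those singular vertices of $Y^0$ at which one still forces the Cuntz covariance condition in the quotient. From this data one builds a relative topological graph $(E,\rho)$ whose C*-algebra $C^*(E,\rho)$ receives a canonical surjection from $C^*(E)$; define $I_\rho$ to be the kernel. Negative invariance of $Y^0$ and the containment $Y^0_{\rm sing}\subseteq Z^0$ are exactly the compatibility conditions ensuring that the pair $(\pi\circ t^1_E,\pi\circ t^0_E)$ is a well-defined covariant representation of the relative correspondence, so that the quotient is non-degenerate.

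Second, I would check that $I_\rho$ is gauge-invariant. This is immediate once the quotient construction is seen to be equivariant: the restricted/relative representation manifestly satisfies the defining equations \eqref{cuntzgauge}, which descends the $\mathbb{T}$-action to $C^*(E,\rho)$.

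Third, I would verify $\rho_{I_\rho}=\rho$. Using the explicit form of the quotient, a function $f\in C_0(E^0)$ satisfies $t^0_E(f)\in I_\rho$ exactly when $f$ vanishes on $U^0=E^0\setminus Y^0$, which identifies $Y^0_{I_\rho}=Y^0$ via the formula~\eqref{admideal}. An analogous but more delicate argument, using that $(t^1_E)^{(1)}(\mathcal{K}(X_E))$ contributes the projections onto the ``range of incoming edges'' and hence captures the regular vertices, identifies $Z^0_{I_\rho}=Z^0$ after passing to the quotient.

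Fourth, I would verify $I_{\rho_I}=I$ for each gauge-invariant ideal $I$. The quotient $C^*(E)/I$ inherits a gauge action, so it is generated by a covariant representation of the relative topological graph encoded by $\rho_I=(Y^0_I,Z^0_I)$ (the definition~\eqref{admideal} is tailored for this). Universality produces a gauge-equivariant surjection $C^*(E,\rho_I)\twoheadrightarrow C^*(E)/I$, and the gauge-invariant uniqueness theorem for topological graphs forces this surjection to be an isomorphism, giving $I_{\rho_I}=I$. Inclusion-reversal is then a formal consequence: larger $I$ kills more functions in $C_0(E^0)$, hence yields larger $Y^0_I$ and $Z^0_I$.

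The principal obstacle is the construction in the first step, together with the gauge-invariant uniqueness theorem on which the fourth step rests. In particular, verifying that the pair $(Y^0,Z^0)$ really does parametrize \emph{all} gauge-invariant covariant quotient representations --- that the ``relative'' data $Z^0$ sitting strictly between $Y^0_{\rm sing}$ and $E^0_{\rm sing}\cap Y^0$ is both necessary and sufficient, and that no further invariants of the quotient appear --- is the technical heart of the result. Once a gauge-invariant uniqueness theorem for relative topological graphs is in hand, the bijection falls out cleanly; establishing that uniqueness theorem is where most of the work lies.
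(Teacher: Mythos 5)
This statement is not proved in the paper at all: it is imported verbatim as \cite[Theorem~3.19]{katsura3}, so there is no in-paper argument to compare yours against. The only trace of the underlying machinery in the paper is the construction of the auxiliary graph $E_\rho$ in \eqref{katspush}, which the authors note is the tool Katsura uses to classify gauge-invariant ideals. Your outline is faithful to that known strategy: building $I_\rho$ as the kernel of a surjection onto a ``relative'' quotient algebra (Katsura packages this as the honest topological graph $E_\rho$ rather than a relative Cuntz--Pimsner algebra, but the two formulations are interchangeable here), checking $\rho_{I_\rho}=\rho$, and recovering $I_{\rho_I}=I$ from a gauge-invariant uniqueness theorem. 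So as a roadmap it points in the right direction.

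As a proof, however, it has a genuine gap, and you identify it yourself: every decisive step is named rather than carried out. Concretely, (i) the gauge-invariant uniqueness theorem for the quotient/relative graphs, on which step four rests entirely, is assumed rather than established, and its hypothesis --- injectivity of the induced representation on the coefficient algebra of $E_{\rho_I}$ --- is exactly what the definition \eqref{admideal} must be shown to guarantee, not merely asserted to be ``tailored for''; (ii) the identification $Z^0_{I_\rho}=Z^0$ is where the interaction between $\mathcal{K}(X_E)$, the regular vertices, and the boundary set $W^0_\rho=Z^0\cap Y^0_{\rm reg}$ (and its closure and boundary, cf.\ \eqref{katspush}) actually enters, and this is dismissed as ``analogous but more delicate''; (iii) the surjectivity claim --- that \emph{every} gauge-invariant ideal is of the form $I_\rho$, i.e.\ that no further invariants of the quotient exist --- is the hardest part of Katsura's argument and is only flagged as the ``technical heart.'' A referee would accept this as a correct summary of why one believes the theorem, not as a proof of it; to make it self-contained you would need to either execute these steps or, as the paper does, cite Katsura's result directly.
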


Next, we need morphisms between topological graphs. 
Recall that a continuous map $f:X\to Y$ between locally compact Hausdorff spaces is {\em proper} if $f^{-1}(K)$ is compact for every compact subset $K$ of $Y$.
\begin{definition}\label{factorkatsura}
(\cite[Definition~2.1, Definition~2.6]{katsura2})
A {\em regular factor map} $m:E\to F$ between topological graphs $E$ and $F$ is a pair of proper continuous maps $m^1:E^1\to F^1$ and $m^0:E^0\to F^0$ such that
\begin{enumerate}
\item[(F1)] $r_F(m^1(e))=m^0(r_E(e))$ and $s_F(m^1(e))=m^0(s_E(e))$ for all $e\in E^1$.
\item[(F2)] If $x\in F^1$ and $v\in E^0$ satisfies $r_F(x)=m^0(v)$, then there exists a unique element $e\in E^1$ such that $m^1(e)=x$ and $r_E(e)=v$.
\item[(F3)] $m^0(E^0_{\rm sing})\subseteq F^0_{\rm sing}$.
\end{enumerate}
\end{definition}
\begin{remark}
The definition of regularity in~\cite[Definition~2.6]{katsura2} is slightly different. However, it is equivalent to our definition due to~\cite[Lemma~2.7]{katsura2}.
\end{remark}
Given a regular factor map $m:E\to F$, one defines the maps
\begin{equation}\label{precomp}
\mu^0:A_F\to A_E,\qquad (\mu^0(f))(v):=f(m^0(v)),\qquad
\mu^1:X_F\to X_E,\qquad (\mu^1(\xi))(e):=\xi(m^1(e)).
\end{equation}
The following result justifies the definition of a regular factor map.
\begin{proposition}\label{functc*}
{\rm (\cite[Proposition~2.9]{katsura2})}
Let $m:E\to F$ be a regular factor map. Then there exists a unique gauge-equivariant $*$-homomorphism $\mu:C^*(F)\to C^*(E)$ such that $\mu\circ t_F^i=\mu^i$, $i=0,1$, where $\mu^0$ and $\mu^1$ are defined by~\eqref{precomp}.
\end{proposition}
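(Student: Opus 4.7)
The plan is to build a covariant representation of the C*-correspondence $(X_F, A_F)$ inside $C^*(E)$ by composing the universal representation of $E$ with the pullback maps $\mu^0$ and $\mu^1$ from~\eqref{precomp}, and then invoke the universal property of $\mathcal{O}_{X_F} = C^*(F)$.

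First I would verify that $\mu^0$ and $\mu^1$ land in $A_E$ and $X_E$ respectively. Properness of $m^0$ ensures that pullback preserves vanishing at infinity, giving a $*$-homomorphism $\mu^0 \colon A_F \to A_E$. For $\mu^1$, condition (F2) implies that $e \mapsto m^1(e)$ restricts to a bijection $r_E^{-1}(v) \to r_F^{-1}(m^0(v))$ for every $v \in E^0$. Expanding~\eqref{inner} through this fiberwise bijection yields
\[
\langle \mu^1(\xi), \mu^1(\eta) \rangle = \mu^0(\langle \xi, \eta \rangle), \qquad \xi, \eta \in X_F,
\]
so in particular $\mu^1(X_F) \subseteq X_E$. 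Setting $\psi^0 := t_E^0 \circ \mu^0$ and $\psi^1 := t_E^1 \circ \mu^1$, axiom (R2) of \Cref{correp} is immediate, while (R1) reduces to the pointwise identity $\mu^1(a \cdot \xi) = \mu^0(a) \cdot \mu^1(\xi)$, a consequence of the source compatibility $s_F \circ m^1 = m^0 \circ s_E$ in (F1).

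Next I would verify covariance of $(\psi^1, \psi^0)$. For a topological graph, the Katsura ideal equals $C_0(E^0_{\rm reg})$, and the contrapositive of (F3) reads $(m^0)^{-1}(F^0_{\rm reg}) \subseteq E^0_{\rm reg}$, whence $\mu^0(J_{X_F}) \subseteq J_{X_E}$. For $a \in J_{X_F}$, covariance of $(t_E^1, t_E^0)$ therefore gives $\psi^0(a) = (t_E^1)^{(1)}(\varphi_E(\mu^0(a)))$, and directly from the definitions $\psi^{(1)}(\theta_{\xi, \eta}) = (t_E^1)^{(1)}(\theta_{\mu^1(\xi), \mu^1(\eta)})$. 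The covariance condition $\psi^0(a) = \psi^{(1)}(\varphi_F(a))$ thus reduces to the left-action compatibility $\varphi_E(\mu^0(a)) = \lim_i \sum_j \theta_{\mu^1(\xi_{i,j}), \mu^1(\eta_{i,j})}$ in $\mathcal{K}(X_E)$ whenever $\varphi_F(a) = \lim_i \sum_j \theta_{\xi_{i,j}, \eta_{i,j}}$ in $\mathcal{K}(X_F)$.

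Granted covariance, the universal property of $\mathcal{O}_{X_F}$ produces a unique $*$-homomorphism $\mu \colon C^*(F) \to C^*(E)$ with $\mu \circ t_F^i = \psi^i$ for $i = 0, 1$; uniqueness is automatic since the images of $t_F^0$ and $t_F^1$ generate $C^*(F)$. Gauge-equivariance is immediate from~\eqref{cuntzgauge}, as $\mu$ sends the weight-$0$ generators $t_F^0(A_F)$ to the weight-$0$ generators $t_E^0(A_E)$ and the weight-$1$ generators $t_F^1(X_F)$ to the weight-$1$ generators $t_E^1(X_E)$. The main obstacle is verifying the left-action compatibility above: since $\varphi_F(a)$ is in general only a norm limit of finite sums of rank-one operators, one must exploit that $r_F$ is a local homeomorphism and $a$ is supported on $F^0_{\rm reg}$ to represent $\varphi_F(a)$ locally as a finite sum of $\theta_{\xi, \eta}$ built from compactly supported partition-of-unity refinements of local sections of $r_F$, pull these sections back along the fiberwise bijection supplied by (F2), compare with the analogous local expression for $\varphi_E(\mu^0(a))$, and pass to the limit using the inner-product identity to control norms.
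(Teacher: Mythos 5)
The paper offers no proof of this proposition: it is quoted verbatim from Katsura (Proposition~2.9 of the cited paper \emph{katsura2}), so there is no in-paper argument to compare against. Your outline is the standard route and, as far as one can reconstruct, essentially Katsura's own: the fiberwise bijection $r_E^{-1}(v)\to r_F^{-1}(m^0(v))$ supplied by (F2) gives $\langle\mu^1(\xi),\mu^1(\eta)\rangle=\mu^0(\langle\xi,\eta\rangle)$, (F1) gives (R1), the contrapositive of (F3) gives $\mu^0(J_{X_F})\subseteq J_{X_E}$ via $J_{X}=C_0(\cdot^0_{\rm reg})$, and universality plus the generator-weight argument yields existence, uniqueness, and gauge-equivariance. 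The one step you leave as a sketch, the left-action compatibility $\varphi_E(\mu^0(a))=\lim_i\sum_j\theta_{\mu^1(\xi_{i,j}),\mu^1(\eta_{i,j})}$, is correctly identified as where the real work lies, and your plan is the right one: the injectivity half of (F2) on range-fibers is precisely what eliminates the cross terms when you pull back the local-section/partition-of-unity approximations of $\varphi_F(a)$, and the inner-product identity controls the norms in the limit. I see no errors; the proposal is a correct (if partly sketched) proof of the cited result.
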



\section{Attaching subgraphs along factor maps}

Adjunction spaces are fundamental in topology, in particular, in the theory of CW-complexes (see, e.g.~\cite{t-td08}). In this section, we introduce the notion of an adjunction graph which enables us to glue  topological graphs.

\subsection{Regular closed subgraphs}
Open subgraphs of topological graphs were defined in~\cite[Definition~5.1]{katsura2}. Here we study well-behaved closed subgraphs. 
\begin{definition}
We say that a topological graph $G$ is a {\em subgraph} of a topological graph $F$, denoted $G\subseteq F$, if there are topological embeddings $G^1\subseteq F^1$ and $G^0\subseteq F^0$ such that $s_F|_{G^1}=s_G$ and $r_F|_{G^1}=r_G$. A~subgraph $G$ of $F$ is called {\em closed} if the subspaces $G^1\subseteq F^1$ and $G^0\subseteq F^0$ are closed. Finally, a~closed subgraph $G$ of $F$ is called {\em regular} if $G^0$ is a negatively invariant subset of $F^0$ and $r_F^{-1}(G^0)\subseteq G^1$.
\end{definition}

Next, we prove several useful results regarding regular closed subgraphs.
\begin{proposition}\label{injreg}
Let $G$ be a closed subgraph of a topological graph $F$. The following are equivalent:
\begin{enumerate}
\item[{\rm (1)}] $G\subseteq F$ is  regular,
\item[{\rm (2)}] the embeddings $G^1\subseteq F^1$ and $G^0\subseteq F^0$ give rise to a regular factor map $m:G\to F$.
\end{enumerate}
\end{proposition}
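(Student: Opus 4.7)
The plan is to unpack both definitions and match the clauses one-to-one. The properness and continuity of the maps $m^i$ (i.e.\ the inclusions $G^i \hookrightarrow F^i$) are automatic in both directions, since closed embeddings into Hausdorff spaces are always proper. Condition (F1) is likewise immediate from the very definition of subgraph, which builds in $s_F|_{G^1}=s_G$ and $r_F|_{G^1}=r_G$. So in each direction the real content lies in matching the remaining regularity conditions.

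For (1)$\Rightarrow$(2), the hypothesis $r_F^{-1}(G^0) \subseteq G^1$ directly yields (F2): given $x \in F^1$ with $r_F(x) = v \in G^0$, we have $x \in G^1$ and we take $e := x$, with uniqueness clear because $m^1$ is an inclusion. The substantive part is (F3), which I would prove in the contrapositive form $G^0 \cap F^0_{\rm reg} \subseteq G^0_{\rm reg}$. Given $v \in G^0 \cap F^0_{\rm reg}$, local compactness of $F^0$ together with $v \in F^0_{\rm fin}$ lets me choose a compact neighborhood $K$ of $v$ in $F^0$ with $s_F^{-1}(K)$ compact; then $K \cap G^0$ is a closed neighborhood of $v$ in $G^0$, and $s_G^{-1}(K \cap G^0) = s_F^{-1}(K \cap G^0) \cap G^1$ is a closed subset of the compact set $s_F^{-1}(K)$, hence compact, giving $v \in G^0_{\rm fin}$. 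For the sinks condition I would use the relatively open set $W := G^0 \cap F^0_{\rm reg}$ as a neighborhood of $v$ in $G^0$: for any $u \in W$, negative invariance of $G^0$ yields an edge $e \in F^1$ with $s_F(e) = u$ and $r_F(e) \in G^0$, and then $r_F^{-1}(G^0) \subseteq G^1$ upgrades this to $e \in G^1$, so $W \subseteq s_G(G^1)$ and therefore $v \notin \overline{G^0_{\rm sink}}$.

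For (2)$\Rightarrow$(1), the inclusion $r_F^{-1}(G^0) \subseteq G^1$ is immediate from the existence clause of (F2) applied to $v := r_F(x)$. For negative invariance, given $v \in G^0 \cap F^0_{\rm reg}$, condition (F3) applied contrapositively forces $v \in G^0_{\rm reg}$; in particular $v \in \overline{s_G(G^1)}$ and $v$ admits a neighborhood $V$ in $G^0$ with $s_G^{-1}(V)$ compact. Taking a net $e_\alpha \in G^1$ with $s_G(e_\alpha) \to v$, eventually $e_\alpha \in s_G^{-1}(V)$, and compactness provides a subnet converging to some $e \in G^1$ with $s_G(e) = v$. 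Viewed in $F^1$, this $e$ satisfies $s_F(e) = v$ and $r_F(e) = r_G(e) \in G^0$, as required.

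The main obstacle is the verification of (F3) in the direction (1)$\Rightarrow$(2), since it is the only point where both clauses defining regular closed subgraph are needed simultaneously — negative invariance to supply edges with source near $v$, and $r_F^{-1}(G^0) \subseteq G^1$ to ensure those edges actually lie in $G^1$ — and both pieces of the definition of $G^0_{\rm reg}$ must be produced.
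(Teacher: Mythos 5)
Your proof is correct, and its skeleton coincides with the paper's: properness and (F1) are dismissed as automatic, (F2) is identified with the condition $r_F^{-1}(G^0)\subseteq G^1$, and (F3) is identified with negative invariance of $G^0$. The one genuine divergence is in how that last identification is handled. The paper simply invokes Katsura's result (\cite[Proposition~2.2]{katsura3}) asserting that, for a closed positively invariant subset, negative invariance is equivalent to the singular-vertex condition (F3), whereas you prove both implications from scratch: in the direction (1)$\Rightarrow$(2) you verify $G^0\cap F^0_{\rm reg}\subseteq G^0_{\rm fin}$ by shrinking to a compact neighborhood $K$ with $s_F^{-1}(K)$ compact and intersecting with the closed set $G^1$, and you rule out $v\in\overline{G^0_{\rm sink}}$ by showing the relatively open set $G^0\cap F^0_{\rm reg}$ lies inside $s_G(G^1)$ (this is exactly the point where negative invariance and $r_F^{-1}(G^0)\subseteq G^1$ must be used together, as you note); in the direction (2)$\Rightarrow$(1) your net/compactness argument producing an edge with source $v$ is in effect a re-proof of \cite[Proposition~2.8]{katsura1} ($G^0_{\rm reg}\subseteq s_G(G^1)$), which the paper uses elsewhere (\Cref{rowsub}). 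All of these steps check out. What your route buys is a self-contained argument that makes visible exactly which hypotheses drive which clause of regularity; what the paper's route buys is brevity and consistency with the rest of its reliance on Katsura's structural results. Neither is a gap.
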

\begin{proof}
Note that the associated closed embeddings $m^1$ and $m^0$ are automatically proper. Similarly, the condition (F1) for $m$ is satisfied due to the assumption that the source and range maps on $G$ are the restrictions of the source and range maps on $F$.  Let us show that $m$ satisfies (F2) if and only if $r_F^{-1}(G^0)\subseteq G^1$. First, if $r_F(e)\in G^0$ for some $e\in F^1$, then $e\in G^1$ by (F2). Conversely, assume that $r_F^{-1}(G^0)\subseteq G^1$. Since $G^1=r_G^{-1}(G^0)\subseteq r_F^{-1}(G^0)$, we obtain that $G^1=r_F^{-1}(G^0)$. Hence, if $e\in F^1$ satisfies $r_F(e)\in G^0$, then $e\in r_F^{-1}(G^0)=G^1$, which shows the condition (F2) for $m$. Notice that the condition $r_F^{-1}(G^0)\subseteq G^1$ is stronger than positive invariance of $G^0\subseteq F^0$. By~\cite[Proposition~2.2]{katsura3}, the negative invariance of $G^0$ is equivalent to the condition (F3) for $m$.
\end{proof}

\begin{proposition}\label{facregsub}
Let $m:G\to E$ be a regular factor map. Then
\[
m(G):=(m^1(G^1),m^0(G^0),s_E|_{m^1(G^1)},r_E|_{m^1(G^1)})
\]
is a regular closed subgraph of $F$.
\end{proposition}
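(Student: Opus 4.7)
The plan is to verify in three steps that $m(G)$ is (i) a topological graph, (ii) a closed subgraph of $E$ (the statement's $F$ should read $E$), and (iii) regular. Throughout, the main technical input is that $m^0$ and $m^1$ are proper continuous maps between locally compact Hausdorff spaces, hence closed; consequently $m^0(G^0)\subseteq E^0$ and $m^1(G^1)\subseteq E^1$ are closed, and so locally compact Hausdorff in the subspace topology. Condition (F1) of \Cref{factorkatsura} shows that $s_E$ and $r_E$ restrict to continuous maps into $m^0(G^0)$, which handles the obvious compatibilities of source and range.

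The central computation I would do first is the identity
\[
m^1(G^1)\;=\;r_E^{-1}\bigl(m^0(G^0)\bigr).
\]
The inclusion $\subseteq$ is (F1); the reverse inclusion is an immediate application of (F2), since if $x\in E^1$ satisfies $r_E(x)=m^0(v)$ for some $v\in G^0$, then (F2) produces $e\in G^1$ with $m^1(e)=x$. This identity has two consequences at once. First, because $r_E\colon E^1\to E^0$ is a local homeomorphism and $m^1(G^1)$ is the full $r_E$-preimage of $m^0(G^0)$, the restriction $r_E|_{m^1(G^1)}\colon m^1(G^1)\to m^0(G^0)$ is itself a local homeomorphism, which completes the topological-graph axioms for $m(G)$ and shows it is a closed subgraph of $E$. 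Second, the containment $r_E^{-1}(m^0(G^0))\subseteq m^1(G^1)$ appearing in the definition of a regular closed subgraph holds by equality, for free.

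What remains is negative invariance of $m^0(G^0)\subseteq E^0$. Given $v\in m^0(G^0)\cap E^0_{\rm reg}$, I would choose $w\in G^0$ with $m^0(w)=v$; the contrapositive of (F3) yields $w\in G^0_{\rm reg}$. I expect this to be the main obstacle, because one has to pass from ``$w$ is regular in $G$'' to the existence of an actual edge in $G^1$ with source $w$. This uses a short compactness argument: by definition of $G^0_{\rm reg}$ one can find an open neighborhood $V$ of $w$ with $s_G^{-1}(V)$ compact and $V\subseteq\overline{s_G(G^1)}$, whereupon $s_G(s_G^{-1}(V))=s_G(G^1)\cap V$ is both closed in $V$ (continuous image of a compact set into a Hausdorff space) and dense in $V$, hence equal to $V$, so $w\in s_G(G^1)$. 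Picking any edge $e\in G^1$ with $s_G(e)=w$, the edge $m^1(e)\in E^1$ satisfies $s_E(m^1(e))=v$ and, by (F1), $r_E(m^1(e))=m^0(r_G(e))\in m^0(G^0)$, completing the verification of negative invariance and hence of regularity.
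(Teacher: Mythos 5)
Your proof is correct and follows essentially the same route as the paper: (F1) gives the subgraph compatibilities, properness gives closedness, (F2) gives $r_E^{-1}(m^0(G^0))\subseteq m^1(G^1)$, and the contrapositive of (F3) reduces negative invariance to producing an edge with source a regular vertex of $G$. The only difference is that you prove the containment $G^0_{\rm reg}\subseteq s_G(G^1)$ by hand via your compactness/density argument, whereas the paper silently invokes this known fact (cited elsewhere in the paper as Proposition~2.8 of Katsura's first paper); you also correctly flag that the statement's ``$F$'' should read ``$E$''.
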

\begin{proof}
To prove that $m(G)$ is a subgraph, note that $r_E(m(G)^1)\subseteq m(G)^0$ and $s_E(m(G)^1)\subseteq m(G)^0$ due to the condition (F1) for $m$. Since $m^0$ is a~proper map, we infer that $m^0(G^0)$ is closed in~$E^0$. Therefore, $m(G)$ is a~closed subgraph of $E$. To conclude regularity of $m(E)$, we have to prove that $r_E^{-1}(m^0(G^0))\subseteq m^1(G^1)$ and that $m^0(G^0)$ is negatively invariant. Let $e\in E^1$ be such that $r_E(e)=m^0(g)$ for some $g\in G^0$. Then there is $a\in G^1$ such that $m^1(a)=e$ by (F2). To prove negative invariance, note that if $v\in m^0(G^0)\cap E^0_{\rm reg}$ then there is $g\in G^0$ such that $m^0(g)=v$. Since $m$ is regular, it follows that $g\in G^0_{\rm reg}$. In turn, there exists $a\in G^1$ such that $s_G(a)=g$. Hence, $s_E(m^1(a))=m^0(s_G(a))=m^0(g)=v$. Furthermore, $r_E(m^1(a))\in m^0(E^0)$, so $m^0(E^0)$ is negatively invariant.
\end{proof}

We end with two immediate results for row-finite graphs.
\begin{proposition}\label{rowsub}
Every regular closed subgraph $G$ of a row-finite graph $F$ is row-finite.
\end{proposition}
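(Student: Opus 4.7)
The statement to prove is that $s_G(G^1)=G^0_{\rm reg}$ for a regular closed subgraph $G$ of a row-finite topological graph $F$. My plan is to prove the two containments separately, leveraging \cref{injreg} to convert the abstract regularity hypothesis into the concrete conditions (F1)--(F3) for the closed embedding $G\hookrightarrow F$ viewed as a regular factor map.

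For the inclusion $s_G(G^1)\subseteq G^0_{\rm reg}$ I would chain two easy observations. Since $s_G=s_F|_{G^1}$ and $G^1\subseteq F^1$, we have $s_G(G^1)\subseteq s_F(F^1)=F^0_{\rm reg}$ by row-finiteness of $F$. Next, condition (F3) applied to the inclusion gives $G^0_{\rm sing}\subseteq F^0_{\rm sing}$, equivalently $F^0_{\rm reg}\cap G^0\subseteq G^0_{\rm reg}$. Combining, $s_G(G^1)\subseteq F^0_{\rm reg}\cap G^0\subseteq G^0_{\rm reg}$.

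For the reverse inclusion $G^0_{\rm reg}\subseteq s_G(G^1)$ I would argue by contrapositive: assume $v\in G^0\setminus s_G(G^1)$ and show $v\in G^0_{\rm sing}$, splitting into two cases according to whether $v\in s_F(F^1)$ or not. If $v\notin s_F(F^1)$, then $v\in F^0_{\rm sink}=F^0\setminus\overline{s_F(F^1)}$. Since $s_G(G^1)\subseteq s_F(F^1)$ and $G^0$ is closed in $F^0$, the point $v$ also lies outside the closure of $s_G(G^1)$ taken in $G^0$, so $v\in G^0_{\rm sink}\subseteq G^0_{\rm sing}$. If instead $v\in s_F(F^1)$, then row-finiteness yields $v\in F^0_{\rm reg}$; now negative invariance of $G^0\subseteq F^0$ produces an edge $e\in F^1$ with $s_F(e)=v$ and $r_F(e)\in G^0$, and the other defining condition of regularity, $r_F^{-1}(G^0)\subseteq G^1$, forces $e\in G^1$. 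Hence $v=s_G(e)\in s_G(G^1)$, contradicting our assumption and ruling out this case.

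The place where the definition of a regular closed subgraph is used in an essential way is this second case, where we convert ``$v$ is emitted by an edge in $F$'' into ``$v$ is emitted by an edge in $G$'' by producing an edge via negative invariance and then trapping it inside $G^1$ via the range condition. Row-finiteness of $F$ plays a correspondingly modest role: it is what promotes $v\in s_F(F^1)$ to $v\in F^0_{\rm reg}$, which is precisely what is needed to apply negative invariance at $v$. I expect no technical obstacle beyond keeping careful track of which closures and preimages are taken in $G^0$ versus $F^0$, which is harmless since $G^0\subseteq F^0$ is closed.
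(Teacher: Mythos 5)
Your first inclusion, $s_G(G^1)\subseteq G^0_{\rm reg}$, is correct and is exactly the paper's argument (row-finiteness of $F$ gives $s_G(G^1)\subseteq s_F(F^1)=F^0_{\rm reg}$, and condition (F3) for the inclusion converts $F^0_{\rm reg}\cap G^0$ into $G^0_{\rm reg}$). The gap is in the reverse inclusion, specifically in your first case. From $v\notin s_F(F^1)$ you conclude $v\in F^0_{\rm sink}=F^0\setminus\overline{s_F(F^1)}$; this conflates the complement of $s_F(F^1)$ with the complement of its \emph{closure}. A point can fail to lie in $s_F(F^1)$ while still lying in $\overline{s_F(F^1)}$, even in a row-finite graph: take $F^0=[0,1]$, $F^1=(0,1]$, $r_F=s_F=$ the inclusion, and $G=F$. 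Here $s_F(F^1)=(0,1]=F^0_{\rm reg}$, so $F$ is row-finite, but $v=0$ satisfies $v\notin s_F(F^1)$ while $F^0_{\rm sink}=\emptyset$, so your case~1 places $0$ in the empty set. If you repair the case split to ``$v\in\overline{s_F(F^1)}$ or not,'' the sink case goes through, but you are left with the boundary case $v\in\overline{s_F(F^1)}\setminus s_F(F^1)$, where negative invariance is unavailable (such a $v$ lies in $F^0_{\rm sing}$, and (F3) only gives $G^0_{\rm sing}\subseteq F^0_{\rm sing}$, not the converse). Closing that case amounts to showing $G^0_{\rm fin}\setminus G^0_{\rm sink}\subseteq s_G(G^1)$, i.e.\ to reproving \cite[Lemma~1.22]{katsura1}.

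The paper avoids all of this: the inclusion $G^0_{\rm reg}\subseteq s_G(G^1)$ is an intrinsic property of \emph{every} topological graph (\cite[Proposition~2.8]{katsura1}, applied to $G$ alone), so it needs neither row-finiteness of $F$ nor regularity of the embedding, and the paper simply cites it. Your instinct that negative invariance plus $r_F^{-1}(G^0)\subseteq G^1$ should produce the required edge is sound where it applies (your case~2 is correct), but it only covers the vertices of $G^0$ that are regular \emph{in $F$}, which is not enough to establish the contrapositive on all of $G^0\setminus s_G(G^1)$.
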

\begin{proof}
We need to prove that $s_G(G^1)=G^0_{\rm reg}$. The containment $G^0_{\rm reg}\subseteq s_G(G^1)$ follows from~\cite[Proposition~2.8]{katsura1}. Now, let $v\in s_G(G^1)$. Then $v=s_G(e)=s_F(e)$ for some $e\in G^1$. Since $F$ is row-finite, $v\in F^0_{\rm reg}$. Hence, $v\in G^0_{\rm reg}$ by regularity of $G\subseteq F$, and we are done.
\end{proof}

\begin{proposition}\label{rowreg}
Let $m:G\to E$ be a regular factor map between row-finite graphs. Then $m^0(G^0_{\rm reg})\subseteq E^0_{\rm reg}$.
\end{proposition}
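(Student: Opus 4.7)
The plan is very short: use row-finiteness to characterise regular vertices as sources of edges, push the witnessing edge forward through $m^1$, and invoke (F1).

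More concretely, fix $v \in G^0_{\rm reg}$. Since $G$ is row-finite, the defining condition $s_G(G^1)=G^0_{\rm reg}$ gives us at least one $e \in G^1$ with $s_G(e)=v$. Applying the source-compatibility clause of (F1) yields
\[
s_E(m^1(e)) \;=\; m^0(s_G(e)) \;=\; m^0(v),
\]
so $m^0(v) \in s_E(E^1)$. Row-finiteness of $E$ means exactly $s_E(E^1) = E^0_{\rm reg}$, hence $m^0(v) \in E^0_{\rm reg}$, as required.

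There is essentially no obstacle here: the statement is a one-line consequence of (F1) once row-finiteness is read as the identification $s(F^1) = F^0_{\rm reg}$. Notably, neither the properness of the components of $m$ nor the condition (F3) (which instead controls the \emph{singular} direction) is needed; only (F1) and the two row-finiteness hypotheses enter. In particular, the analogue for non-row-finite graphs would require extra work, since a regular vertex need only satisfy the local finiteness condition near itself, which is not preserved under arbitrary continuous maps; but under row-finiteness the characterisation of regularity via emitted edges trivialises the question.
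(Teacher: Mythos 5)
Your proof is correct and coincides with the paper's own argument: both pick a witnessing edge $e\in G^1$ with $s_G(e)=v$ via row-finiteness of $G$, push it forward using the source clause of (F1), and conclude from $s_E(E^1)=E^0_{\rm reg}$ for the row-finite graph $E$. Nothing further is needed.
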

\begin{proof}
Take $w\in G^0_{\rm reg}$. Then $w=s_G(a)$ for some $a\in G^1$ because $G$ is row-finite. Next, $m^0(w)=m^0(s_G(a))=s_E(m^1(a))$, which implies that $m^0(w)\in E^0_{\rm reg}$ by row-finiteness of $E$.
\end{proof}

\subsection{Adjunction graphs}
Now, we turn our attention to pushouts of topological graphs. In the category of (discrete) directed graphs and graph homomorphisms, pushouts come from the corresponding pushouts of the vertex and edge spaces in the category of sets and maps (see, e.g.~\cite{ek-79}). The situation is more complex in the context of topological graphs. For starters,  pushouts of locally compact Hausdorff spaces might be ill-behaved, i.e. result in spaces which are neither locally compact nor Hausdorff. 
Therefore, we consider the following special case. Let $X$ and $Y$ be topological spaces, let $A$ be a closed subspace of $Y$, and let $f:A\to X$ be a~continuous map. The {\em adjunction space} $X\cup_f Y$ (see, e.g.~\cite[p.~7]{t-td08}) is the quotient space of the disjoint union $X\sqcup Y$, where the quotient map is given by the relation $f(a)\sim a$ for all $a\in A$. The map $f$ is often called the {\em attaching map}. Note that $X\cup_fY=X\sqcup (Y\setminus A)$ as sets. 
The canonical map $X\to X\cup_f Y$ is a~closed embedding, so we can view $X$ as a~closed subspace of $X\cup_f Y$. Similarly, one defines the map
\begin{equation}\label{canadjmap}
p:Y\hookrightarrow X\sqcup Y\longrightarrow X\cup_fY,
\end{equation}
where the second arrow is the defining quotient map. Note that $p$ restricted to $Y\setminus A$ is an open embedding.

If $X$ and $Y$ are Hausdorff, then $X\cup_f Y$ is again Hausdorff (see, e.g.~\cite[Proposition 1.2.4(3)]{t-td08}). 
The following result is probably well known but since we could not find a reference, we provide a~proof.
\begin{proposition}\label{adjprop}
Let $X$ and $Y$ be locally compact Hausdorff spaces, let $A$ be a closed subspace of~$Y$, and let $f:A\to X$ be a proper map. Then $X\cup_f Y$ is a locally compact Hausdorff space.
\end{proposition}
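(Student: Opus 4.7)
The Hausdorff conclusion is already cited from \cite{t-td08}, so the task is local compactness. I would work with the canonical quotient map $q:X\sqcup Y\to X\cup_f Y$, and treat the two kinds of points of $X\cup_f Y=X\sqcup (Y\setminus A)$ separately.

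First, I would record a preliminary closedness fact that will drive the rest of the argument: since $f:A\to X$ is proper and $X$ is locally compact Hausdorff, $f$ is a closed map. Combined with $A$ being closed in $Y$, a direct computation of saturations shows that $q$ is a closed map: for $F$ closed in $X\sqcup Y$, the saturation $q^{-1}(q(F))$ decomposes as $\bigl((F\cap X)\cup f(F\cap Y\cap A)\bigr)\sqcup \bigl((F\cap Y)\cup f^{-1}(F\cap X)\bigr)$, both of which are closed thanks to $f$ being closed and $A$ being closed.

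For a point $z$ in the image of $Y\setminus A$, note that $A$ closed implies $p(Y\setminus A)$ is open in $X\cup_fY$ (check that $q^{-1}$ of this set equals $Y\setminus A\subseteq X\sqcup Y$), and a similar check shows $p|_{Y\setminus A}$ is an open embedding. Since $Y\setminus A$ is locally compact Hausdorff, one obtains a compact neighborhood of $z$ inside $p(Y\setminus A)$, which is therefore also one in $X\cup_fY$.

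The main work is for $x\in X\subseteq X\cup_fY$. The plan is to build a saturated open set around $x$ with compact closure. Choose an open $U\subseteq X$ with $x\in U$ and $\overline U$ compact. Since $f$ is proper, $f^{-1}(\overline U)$ is compact in $Y$, so by local compactness of $Y$ there is an open $V_0\subseteq Y$ with $f^{-1}(\overline U)\subseteq V_0$ and $\overline{V_0}$ compact. Set
\[
V:=V_0\setminus f^{-1}(X\setminus U),
\]
which is open in $Y$ because $f^{-1}(X\setminus U)$ is closed in $A$ and $A$ is closed in $Y$. A short verification gives $V\cap A=f^{-1}(U)$ and $f^{-1}(U)\subseteq V$, so $U\sqcup V\subseteq X\sqcup Y$ is saturated under the relation $f(a)\sim a$. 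Hence $W:=q(U\sqcup V)$ is open in $X\cup_fY$ and contains $x$. Finally, since $q$ is closed, $q(\overline U\cup \overline V)$ is a closed set containing $W$, so $\overline W\subseteq q(\overline U)\cup q(\overline V)$; the right-hand side is compact as a finite union of continuous images of compact sets, and a closed subset of a compact set in the Hausdorff space $X\cup_fY$ is compact. Thus $\overline W$ is a compact neighborhood of $x$.

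The delicate step is the construction of $V$: one needs a saturated open neighborhood, which forces the condition $V\cap A\subseteq f^{-1}(U)$, and one simultaneously wants $f^{-1}(U)\subseteq V$ together with $\overline V$ compact. Properness of $f$ is used exactly to trap $f^{-1}(\overline U)$ inside a relatively compact open set, and closedness of $A$ in $Y$ is needed to carve out the obstruction $f^{-1}(X\setminus U)$ while keeping the result open. Everything else is routine.
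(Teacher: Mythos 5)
Your argument is correct, and it departs from the paper's proof precisely at the point where the real difficulty sits. The paper likewise disposes of the points of $X\setminus f(A)$ and $Y\setminus A$ by noting they are open, but for a point $f(a)\in f(A)$ it takes a compact neighborhood $K$ of $f(a)$ in $X$ and asserts that the image of $K\sqcup f^{-1}(K)$ under the quotient map is a compact neighborhood of $f(a)$ in $X\cup_f Y$. Since $f^{-1}(K)\subseteq A$, that image contains no nearby points coming from $Y\setminus A$, so the neighborhood property is exactly the nontrivial claim; your construction of the saturated open set $U\sqcup V$, with $V=V_0\setminus f^{-1}(X\setminus U)$ open in $Y$ (not merely in $A$) and $V\cap A=f^{-1}(U)$, supplies precisely the verification that is missing there, while using properness in the same essential way (to trap $f^{-1}(\overline U)$ inside a relatively compact open set). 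Two minor points. First, your displayed formula for the saturation $q^{-1}(q(F))$ omits the term $f^{-1}(f(F\cap Y\cap A))$ from the second component (it matters when $f$ is not injective); this does not harm the conclusion, since that set is also closed because $f$ is closed and continuous. Second, you do not actually need the closedness of $q$: at the last step $q(\overline U\sqcup\overline V)$ is compact as a continuous image of a compact set, hence closed in the Hausdorff space $X\cup_f Y$, which already gives $\overline W\subseteq q(\overline U)\cup q(\overline V)$ and therefore the compactness of $\overline W$.
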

\begin{proof}
We already know that $X\cup_f Y$ is Hausdorff so we only need to check whether every point has a~compact neighborhood. The fact that $f$ is closed combined with the definition of the topology on $X\cup_f Y$ implies that the subspaces $X\setminus f(A)$ and $Y\setminus A$ can be identified with open subsets of locally compact Hausdorff spaces $X$ and $Y$, respectively. Hence, it suffices to find compact neighborhoods of the points $f(a)\in f(A)\subseteq X\cup_f Y$. Let $K$ be a compact neighborhood of $f(a)$ in $X$. We can choose $K$ such that $K\subseteq f(A)$ because $f(A)$ is closed in $X$. Next, properness of $f$ implies that $f^{-1}(K)$ is a compact neighborhood of $a$ in $A\subseteq Y$.  Therefore, the image of the compact set $K\sqcup f^{-1}(K)$ under the quotient map is a compact neighborhood of $f(a)$ in $X\cup_f Y$.
\end{proof}
\begin{remark}
If $f$ is not proper, then $X\cup_f Y$ might not be locally compact even when $X$ is compact Hausdorff  and $Y$ is second-countable locally compact Hausdorff as the following example\footnote{We owe this example to Alexander Ravsky.} shows: $X=\{0\}\sqcup\{1/n~:~n\in\mathbb{N}\}$, $A=\{0\}\times\mathbb{N}\subseteq X\times\mathbb{N}=Y$, and $f:A\ni (0,n)\mapsto 1/n\in X$.
\end{remark}

Let us introduce an analog of the adjunction space for topological graphs using the notions of a~closed subgraph and a~factor map. Let $E$ and $F$ be topological graphs. Next, let $G$ be a closed subgraph of $F$ and let $m^0:G^0\to E^0$ and $m^1:G^1\to E^1$ be continuous maps. Then, we can consider adjunction spaces $E^0\cup_{m^0}F^0$ and $E^1\cup_{m^1}F^1$ and their defining quotient maps
\begin{equation}\label{quotadj}
q^0:E^0\sqcup F^0\longrightarrow E^0\cup_{m^0}F^0,\qquad q^1:E^1\sqcup F^1\longrightarrow E^1\cup_{m^1}F^1.
\end{equation}
Furthermore, we can view $E^0$ and $E^1$ as closed subspaces of $E^0\cup_{m^0}F^0$ and $E^1\cup_{m^1}F^1$, respectively, and we have the canonical maps
\begin{equation}\label{canmapp}
p^0:F^0\longrightarrow E^0\cup_{m^0}F^0,\qquad p^1:F^1\longrightarrow E^1\cup_{m^1}F^1,
\end{equation}
defined analogously to~\eqref{canadjmap}. Let us also mention that the diagrams 
    \[
    \begin{tikzcd}[ampersand replacement=\&]
	\& {(E\cup_mF)^0} \\
	{E^0} \&\& {F^0} \\
	\& {G^0}
	\arrow[, right hook->, from=2-1, to=1-2]
	\arrow["m^0", <-, from=2-1, to=3-2]
	\arrow["p^0", <-, from=1-2, to=2-3]
	\arrow[, right hook->, from=3-2, to=2-3]
\end{tikzcd}\qquad  \text{and}\qquad 
\begin{tikzcd}[ampersand replacement=\&]
	\& {(E\cup_mF)^1} \\
	{E^1} \&\& {F^1} \\
	\& {G^1}
	\arrow[, right hook->, from=2-1, to=1-2]
	\arrow["m^1", <-, from=2-1, to=3-2]
	\arrow["p^1", <-, from=1-2, to=2-3]
	\arrow[, right hook->, from=3-2, to=2-3]
\end{tikzcd}
\]
are pushout diagrams in the category of topological spaces and continuous maps.

To obtain a topological graph from the above data, we need the source and range maps. In the definition below, we provide sufficient assumptions to ensure that the resulting spaces are locally compact Hausdorff and the range map is a local homeomorphism.

\begin{definition}\label{adjgraph}
Let $E$ and $F$ be topological graphs and let $G$ be a closed subgraph of $F$ such that $r_F^{-1}(G^0)\subseteq G^1$. 
Furthermore, let $m^0:G^0\to E^0$ and $m^1:G^1\to E^1$ be a~pair of proper maps satisfying (F1). The {\em adjunction graph} $E\cup_m F$ is defined as follows:
\[
(E\cup_mF)^0:=E^0\cup_{m^0}F^0,\qquad (E\cup_mF)^1:=E^1\cup_{m^1}F^1,
\]
\[
s_\cup(e):=\begin{cases}s_E(e) & e\in E^1\\ p^0(s_F(e)) & e\in F^1\setminus G^1\end{cases},\qquad\text{and}\qquad r_\cup(e):=\begin{cases}r_E(e) & e\in E^1\\ r_F(e) & e\in F^1\setminus G^1\end{cases},
\]
where $p^0:F^0\to (E\cup_{m}F)^0$ is defined by~\eqref{canmapp}. When $m$ is injective, we write $E\cup F:=E\cup_mF$ and call $E\cup F$ the {\em union graph} of $E$ and $F$. In this case, we also call $G$ the {\em intersection graph} and denote it by $E\cap F$.
\end{definition}
We emphasize that the need for $E\cap F$ to be a closed subgraph in both $E$ and $F$ is dictated by the fact that general unions might not be pushouts.
\begin{proposition}
The adjunction graph $E\cup_mF$ is a~topological graph. 
\end{proposition}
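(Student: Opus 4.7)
The plan is to verify in turn the three defining properties of a topological graph: that the vertex and edge spaces are locally compact Hausdorff, that the source and range maps are continuous, and that the range map is a local homeomorphism.

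For local compactness and Hausdorffness, since $m^0$ and $m^1$ are proper maps between locally compact Hausdorff spaces, applying \Cref{adjprop} twice yields that $(E\cup_m F)^0 = E^0 \cup_{m^0} F^0$ and $(E\cup_m F)^1 = E^1 \cup_{m^1} F^1$ are locally compact Hausdorff.

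For continuity of $s_\cup$ and $r_\cup$, I would appeal to the universal property of the topological pushout. Each map is built from a pair of continuous maps out of $E^1$ and $F^1$ that agree on $G^1$ under the identification $g \sim m^1(g)$ by virtue of~(F1). Concretely, for $s_\cup$ the map from $E^1$ is $s_E$ followed by the closed inclusion $E^0 \hookrightarrow (E\cup_m F)^0$, while the map from $F^1$ is $p^0 \circ s_F$; on $G^1$ they agree because $s_E(m^1(g)) = m^0(s_G(g))$, which by definition of the adjunction space equals $p^0(s_G(g))$ in $(E\cup_m F)^0$. The same argument handles $r_\cup$, using the range part of (F1).

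The main step, which I expect to be the principal obstacle, is showing that $r_\cup$ is a local homeomorphism. I would cover $(E\cup_m F)^1$ by two classes of open sets. On the open subspace $p^1(F^1\setminus G^1)$ the assumption $r_F^{-1}(G^0) \subseteq G^1$ forces $r_F(F^1\setminus G^1) \subseteq F^0\setminus G^0$, and $p^0$ restricts to an open embedding on $F^0\setminus G^0$; combined with the local-homeomorphism property of $r_F$, this yields the conclusion there. For a point $x \in E^1 \subseteq (E\cup_m F)^1$, the strategy is to pick an open neighborhood $U_E \ni x$ in $E^1$ on which $r_E$ is a homeomorphism, then for each $g \in (m^1)^{-1}(U_E) \subseteq G^1$ select $N_g$ open in $F^1$ on which $r_F$ is a homeomorphism and satisfying $m^1(N_g \cap G^1) \subseteq U_E$, and form $V := \bigcup_g N_g$. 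One verifies $V \cap G^1 = (m^1)^{-1}(U_E)$, so that $W := U_E \cup p^1(V \setminus G^1)$ is open in the pushout; then $r_\cup$ sends $U_E$ and $V\setminus G^1$ into the disjoint subsets $E^0$ and $F^0\setminus G^0$ of $(E\cup_m F)^0$, so injectivity of $r_\cup|_W$ reduces to injectivity of $r_E|_{U_E}$ and of $r_F|_{V\setminus G^1}$. Arranging the latter is where properness of $m^1$ enters: it makes $(m^1)^{-1}(\overline{U_E})$ compact, allowing a finite subcover argument together with further shrinking of $U_E$ to rule out collisions between the different $N_g$'s.
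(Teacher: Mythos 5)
Your first two steps are correct and essentially identical to the paper's: local compactness and Hausdorffness of the two adjunction spaces follow from \Cref{adjprop} applied to the proper maps $m^0$ and $m^1$, and continuity of $s_\cup$ and $r_\cup$ follows from the universal property of the pushout together with (F1), exactly as in the paper. Your treatment of $r_\cup$ on the open set $p^1(F^1\setminus G^1)$ is also fine: $r_F^{-1}(G^0)\subseteq G^1$ forces $r_F(F^1\setminus G^1)\subseteq F^0\setminus G^0$, and both $p^1|_{F^1\setminus G^1}$ and $p^0|_{F^0\setminus G^0}$ are open embeddings.

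The gap is precisely in the step you flag as the principal obstacle. The neighborhood $W=U_E\cup p^1(V\setminus G^1)$ is the right shape, and the reduction of injectivity of $r_\cup|_W$ to injectivity of $r_E|_{U_E}$ and of $r_F|_{V\setminus G^1}$ is correct; but the final claim --- that properness of $m^1$ plus a finite subcover and a further shrinking of $U_E$ rules out collisions between distinct $N_g$'s --- cannot be established from the hypotheses of \Cref{adjgraph}, which impose only (F1) on $m$. If two distinct edges $g_1\neq g_2\in(m^1)^{-1}(x)$ have the \emph{same} range in $G^0$, the collisions are unavoidable no matter how one shrinks. Concretely: take $F^0=\{0\}\cup\{1/n:n\in\mathbb{N}\}$, let $F^1$ be two disjoint copies of $F^0$ with $r_F$ the identity on each copy and $s_F\equiv 0$, let $G^0=\{0\}$ and $G^1=r_F^{-1}(0)=\{g_1,g_2\}$, let $E$ have one vertex $u$ and one loop $x$, and let $m$ collapse $G$ onto $E$. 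Then $G$ is a closed subgraph with $r_F^{-1}(G^0)\subseteq G^1$, $m$ is proper and satisfies (F1), yet every neighborhood of $x$ in $(E\cup_mF)^1$ contains, for all large $n$, the two distinct edges lying over $1/n$, which have equal $r_\cup$-image; so $r_\cup$ is not locally injective at $x$ and the conclusion itself fails. What excludes this is condition (F2) for $m$ (uniqueness of the lift with prescribed range), which forces the edges of $(m^1)^{-1}(x)$ to have pairwise distinct ranges and is what makes a compactness-and-shrinking argument of the kind you sketch viable; but (F2) is part of a regular factor map, not of \Cref{adjgraph}. You should either add (F2) as a hypothesis or restrict to regular adjunction graphs. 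For what it is worth, the paper's own proof disposes of this point in one sentence (``since $G^1$ is closed and $r_E,r_F$ are local homeomorphisms''), so your more careful attempt has in fact exposed a hypothesis that is needed but not stated.
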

\begin{proof}
The spaces $(E\cup_mF)^0$ and $(E\cup_mF)^1$ are locally compact Hausdorff due to Proposition~\ref{adjprop}. Since $r_F^{-1}(G^0)\subseteq G^1$, we infer that $r_F(F^1\setminus G^1)\subseteq F^0\setminus G^0$. Recall that we can view $E^0$ and $F^0\setminus G^0$ as subspaces of $(E\cup_{m}F)^0$. Hence, the image of $r_\cup$ is contained in $(E\cup_{m}F)^0$. The source map is automatically well defined but one has to use $p^0$ because there are edges in $F^1$ that start in $G^0$ and do not belong to $G^1$. 

Continuity of $s_\cup$ is equivalent to the continuity of the maps
\[
E^1\hookrightarrow E^1\cup_{m^1}F^1\overset{s_\cup}{\longrightarrow} E^0\cup_{m^0}F^0\qquad\text{and}\qquad  F^1\overset{p^1}{\longrightarrow} E^1\cup_{m^1}F^1\overset{s_\cup}{\longrightarrow} E^0\cup_{m^0}F^0.
\]
The first map coincides with $s_E$ by definition. As for the second map, it coincides with $p^0\circ s_F$ due to the condition (F1) for $m$. A similar argument proves the continuity of $r_\cup$.

Let $U\subseteq E^1\cup_{m^1}F^1$ be an open subset. It is clear that
\[
r_\cup(U)=r_E((q^1)^{-1}(U)\cap E^1)\sqcup r_F((q^1)^{-1}(U)\cap (F^1\setminus G^1))\subseteq E^0\cup_{m^0}F^0,
\]
where $q^1:E^1\sqcup F^1\to E^1\cup_{m^1}F^1$ is the quotient map~\eqref{quotadj}.
Recall that $U$ is open in $E^1\cup_{m^1}F^1$ if and only if $(q^1)^{-1}(U)\cap E^1$ is open in $E^1$ and $(q^1)^{-1}(U)\cap F^1$ is open in $F^1$. Since $G^1$ is a closed subset of $F^1$ and both $r_E$ and $r_F$ are local homeomorphisms, we conclude that $r_\cup$ is also a local homeomorphism. 
\end{proof}

Finally, let us compare our Definition~\ref{adjgraph} with a similar construction coming from~\cite{katsura2,katsura3} which we will need in the next section. Let 
$\rho=(Y^0,Z^0)$ be an admissible pair in a~topological graph $E$. 
First, define
\[
Y:=\left(r_E^{-1}(Y^0),Y^0,s_E|_{r_E^{-1}(Y^0)},r_E|_{r_E^{-1}(Y^0)}\right)\qquad\text{and}\qquad W^0_{\rho}:=Z^0\cap Y^0_{\rm reg}\,.
\] 
Note that $Y$ is a~regular closed subgraph of $E$, which implies that $W_\rho^0$ is well defined. Next, let $\partial W^0_\rho$ denote the boundary $\overline{W^0_\rho}\setminus W^0_\rho$, where the closure of $W^0_\rho$ is taken in $Y^0$. Then $\partial W^0_\rho$ is closed in $\overline{W^0_\rho}$ and we can consider the closed embeddings 
$\partial^0:\partial W^0_\rho\to Y^0$ and $\partial^1:r_Y^{-1}(\partial W^0_\rho)\to Y^1$. Finally, put $\overline{W_\rho^1}:=r_Y^{-1}(\overline{W^0_\rho})$ and $\partial W^1_\rho:=r_Y^{-1}(\partial W^0_\rho)$. From the aforementioned data, one constructs a~topological graph $E_\rho$~\cite[Definition~3.13]{katsura3} (c.f.~\cite[p.~799]{katsura2}) as follows:
\begin{equation}\label{katspush}
E^0_\rho:=Y^0\cup_{\partial^0}\overline{W_\rho^0},\qquad E^1_\rho:=Y^1\cup_{\partial^1}\overline{W_\rho^1},
\end{equation}
\[
s_{E_\rho}(e):=\begin{cases}s_Y(e) & e\in Y^1\\ s_Y(e)\in Y^0 & e\in \overline{W_\rho^1}\setminus \partial W^1_\rho\end{cases}\qquad\text{and}\qquad r_{E_\rho}(e):=\begin{cases}r_Y(e) & e\in Y^1\\ r_Y(e)\in W^0_\rho & e\in\overline{W_\rho^1}\setminus \partial W^1_\rho\end{cases}.
\]
Although there are many similarities between $E_\rho$ and the adjunction graph $Y\cup_\partial\overline{W_\rho}$ (assuming the latter exists), the graph $E_\rho$ is not an adjunction graph because its source map is defined differently. In particular, $s_{E_\rho}^{-1}(v)=\emptyset$ for every $v\in W^0_\rho\subseteq E^0_\rho$. Such a construction was possible due to the fact that $W^0_\rho$ is a subset of $Y^0$. The class of graphs $E_\rho$ is used in~~\cite[Proposition~3.16 and Theorem~3.19]{katsura3} to classify gauge-equivariant ideals of topological graph C*-algebras.

\subsection{Regular adjunction graphs}

Finally, we consider attaching regular closed subgraphs along regular factor maps.
\begin{definition}\label{regadj}
An adjunction graph $E\cup_mF$ is called {\em regular} if
\begin{enumerate}
\item $G$ is a regular closed subgraph of $F$ and $E$ is a regular closed subgraph of $E\cup_mF$,
\item $m:G\to E$ and $p:F\to E\cup_mF$, where $p$ is given by~\eqref{canmapp}, are regular factor maps.
\end{enumerate}
\end{definition}

Let us now examine the following question: Given a regular closed subgraph $G$ of $F$ and a regular factor map $m:G\to E$, does it follow that $E$ is necessarily a regular subgraph of $E\cup_mF$? Moreover, is the map $p:F\to E\cup_mF$, as described in the preceding lemma, necessarily a regular factor map? Such a result would generalize~\cite[Lemma~3.9]{th-24} proved in the context of discrete directed graphs. Although most of the arguments used in the discrete case can be generalized to the topological setting, the proof breaks at some point because topological graphs may have singular vertices that are neither sinks nor infinite emitters. In what follows, we obtain various partial results and provide additional conditions under which the answer to the posed question is positive. 

\begin{proposition}\label{pprop}
Let $E\cup_mF$ be an adjunction topological graph. Then $E$ is a closed subgraph of $E\cup_mF$. Furthermore, the maps $p^0:F^0\to (E\cup_mF)^0$ and $p^1:F^1\to (E\cup_mF)^1$, defined as in~\eqref{canmapp}, are proper and satisfy the condition {\rm (F1)}.
\end{proposition}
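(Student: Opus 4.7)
The plan is to verify the three claims in turn. For the first --- that $E$ is a closed subgraph of $E\cup_m F$ --- the closed embeddings $E^0\hookrightarrow (E\cup_m F)^0$ and $E^1\hookrightarrow (E\cup_m F)^1$ are instances of the general fact, recorded just before \Cref{adjprop}, that the canonical map $X\to X\cup_f Y$ is a closed embedding. The compatibility of the source and range maps, $s_\cup|_{E^1}=s_E$ and $r_\cup|_{E^1}=r_E$, then reads off directly from the first branch of the case-split in \Cref{adjgraph}.

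For condition (F1) for $p$, I would split on whether $e\in G^1$ or $e\in F^1\setminus G^1$. If $e\in G^1$, then $p^1(e)$ is represented by $m^1(e)\in E^1$, so \Cref{adjgraph} gives $r_\cup(p^1(e))=r_E(m^1(e))$, which by (F1) for $m$ equals $m^0(r_F(e))=p^0(r_F(e))$; the source identity is the same argument. If $e\in F^1\setminus G^1$, then $p^1(e)=e$, the definition in \Cref{adjgraph} gives $s_\cup(e)=p^0(s_F(e))$ directly and $r_\cup(e)=r_F(e)$; the hypothesis $r_F^{-1}(G^0)\subseteq G^1$ ensures $r_F(e)\in F^0\setminus G^0$, on which $p^0$ acts as the identity, so $p^0(r_F(e))=r_F(e)$ matches.

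The substantive part is showing that $p^0$ and $p^1$ are proper; the two arguments are formally identical, so I would focus on $p^0$. Since $F^0$ and $(E\cup_m F)^0$ are locally compact Hausdorff, it suffices to prove that $p^0$ is closed and has compact fibres, because a closed continuous map with compact fibres is automatically proper. Compact fibres are immediate from the set-theoretic description $(E\cup_m F)^0=E^0\sqcup(F^0\setminus G^0)$: the preimage of a point of $F^0\setminus G^0$ is a singleton, and the preimage of $v\in E^0$ is $(m^0)^{-1}(v)$, compact by properness of $m^0$. For closedness, given $C\subseteq F^0$ closed, I would use the definition of the quotient topology on $(E\cup_m F)^0$ and verify that $(q^0)^{-1}(p^0(C))$ is closed in $E^0\sqcup F^0$. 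Its $E^0$-part equals $m^0(C\cap G^0)$ and is closed because the proper map $m^0$ is closed and $G^0$ is closed in $F^0$; its $F^0$-part equals $C\cup(m^0)^{-1}\bigl(m^0(C\cap G^0)\bigr)$, closed as a union of two closed sets.

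The principal obstacle is precisely this closedness computation, since one must carefully track the identifications imposed by $m^0$: a vertex in $C\cap G^0$ pulls in the entire $m^0$-fibre through its image, which is why the $F^0$-part acquires the extra term $(m^0)^{-1}(m^0(C\cap G^0))$. Once that accounting is done correctly, everything else is a direct unwinding of definitions together with (F1) for $m$, and the verbatim same argument applied to $m^1$, $G^1\subseteq F^1$ and $q^1$ handles $p^1$.
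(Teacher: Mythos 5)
Your proof is correct and follows essentially the same route as the paper's: the closed-subgraph claim and (F1) are read off from the definitions, and properness of $p^0$ is reduced to closedness plus compact fibres, with the identical fibre description. The only (minor) difference is in the closedness step, where the paper observes that the quotient map $q^0$ is itself closed (since $G^0$ is closed and $m^0$ is a closed map) and that $p^0$ is its restriction to the closed subset $F^0$, whereas you carry out the equivalent preimage computation $(q^0)^{-1}(p^0(C))$ by hand.
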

\begin{proof}
We already know that $E^1\subseteq E^1\cup_{m^1}F^1$ and $E^0\subseteq E^0\cup_{m^0}F^0$ are closed subsets. 
By definition of $s_\cup$ and $r_\cup$, we conclude that $E$ is a closed subgraph of $E\cup_mF$.
Further, the condition (F1) for $p$ is immediate from the definitions of the maps involved. We only prove that $p^0$ is proper because the proof for $p^1$ is analogous. It suffices to show that $p^0$ is closed and that $(p^0)^{-1}(v)$ is compact for every $v\in (E\cup_mF)^0$. Since $G^0$ is closed in $F^0$ and $m^0$ is a closed map, the quotient map $q^0:E^0\sqcup F^0\to (E\cup_mF)^0$ is closed. Therefore, $p^0$ is closed as the restriction of $q^0$ to a closed subset. Next, note that
\[
(p^0)^{-1}(v)=\begin{cases}\emptyset & v\in E^0\setminus m^0(G^0)\\
(m^0)^{-1}(v) & v\in m^0(G^0)\\
\{v\} & v\in F^0\setminus G^0\end{cases}.
\]
Since $(m^0)^{-1}(v)$ is compact for every $v\in m^0(G^0)$, we conclude that $p^0$ is proper.
\end{proof}

Due to the fact that $r$ is a local homeomorphism,  the condition (F2) is the same for topological graphs and discrete directed graphs (see~\cite[Definition~2.2]{th-24}). Therefore, by repeating a part of the proof of~\cite[Lemma~3.9]{th-24}, we obtain the following result.

\begin{proposition}\label{adjtb}
Let $E\cup_mF$ be an adjunction topological graph such that $r_F^{-1}(G^0)\subseteq G^1$ and $m:G\to E$ satisfies the condition {\rm (F2)}. Then $r_\cup^{-1}(E^0)\subseteq E^1$ and $p:F\to E\cup_mF$ satisfies the condition {\rm (F2)}.
\end{proposition}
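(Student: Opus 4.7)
My plan is to prove both assertions by a direct case analysis that leverages the set-theoretic decomposition $(E\cup_m F)^0=E^0\sqcup(F^0\setminus G^0)$ coming from the construction of the adjunction space, together with the standing hypothesis $r_F^{-1}(G^0)\subseteq G^1$. The latter is the crucial input: it guarantees that every edge of $F$ whose range lands in the ``glued'' part $G^0$ already lies in $G^1$, which is precisely what keeps the pushout compatible with the range maps. For the first claim, I would take $e\in r_\cup^{-1}(E^0)$ and aim for a contradiction if $e\in F^1\setminus G^1$: by definition $r_\cup(e)=r_F(e)$, and the hypothesis forces $r_F(e)\notin G^0$, so $r_F(e)$ belongs to the open piece $F^0\setminus G^0\subseteq(E\cup_mF)^0$, which is disjoint from $E^0$; this contradicts $r_\cup(e)\in E^0$, forcing $e\in E^1$.

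For condition (F2) for $p$, I would take $x\in(E\cup_m F)^1$ and $v\in F^0$ with $r_\cup(x)=p^0(v)$, and split on whether $v\in G^0$. If $v\notin G^0$, then $p^0(v)$ sits in $F^0\setminus G^0$, which is disjoint from $E^0$, so by the first claim $x\in F^1\setminus G^1$; then $r_\cup(x)=r_F(x)\in F^0\setminus G^0$ gets identified with $v$, and $e:=x$ is the required lift. Uniqueness is immediate because $p^1$ is injective on $F^1\setminus G^1$ and any other preimage $e'$ would have $r_F(e')=v\notin G^0$, hence $e'\notin G^1$. If instead $v\in G^0$, then $p^0(v)=m^0(v)\in E^0$, and a symmetric argument using the first claim rules out $x\in F^1\setminus G^1$, so $x\in E^1$ with $r_E(x)=m^0(v)$. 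Applying condition (F2) for the factor map $m\colon G\to E$ produces a unique $a\in G^1$ with $m^1(a)=x$ and $r_G(a)=v$; setting $e:=a\in F^1$ gives $p^1(e)=m^1(a)=x$ and $r_F(e)=v$. For uniqueness here, any other lift $e'$ satisfies $r_F(e')=v\in G^0$, so $e'\in G^1$ by hypothesis, and then the uniqueness in (F2) for $m$ finishes the job.

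I do not anticipate a serious obstacle; the proof is essentially bookkeeping, and the paper already flags that the argument parallels \cite[Lemma~3.9]{th-24} from the discrete setting. The only thing that requires a bit of care is consistently tracking the three pieces $E^0$, $F^0\setminus G^0$, and $m^0(G^0)\subseteq E^0$ inside the pushout $(E\cup_mF)^0$, so that one knows exactly which edges can have range in $E^0$ versus in $F^0\setminus G^0$. Once this identification is fixed, the hypothesis $r_F^{-1}(G^0)\subseteq G^1$ does all the work, and (F2) for $m$ is invoked once, in the one case where the range $v$ lies on the attaching locus.
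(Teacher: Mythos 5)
Your proof is correct and is essentially the argument the paper has in mind: the paper does not write it out, instead observing that since $r$ is a local homeomorphism the condition (F2) is verbatim the discrete one and deferring to the proof of \cite[Lemma~3.9]{th-24}, which proceeds by exactly your case split on whether the range lies in $E^0$ or in $F^0\setminus G^0$ inside the pushout. The only cosmetic slip is that in the case $v\notin G^0$ you cite ``the first claim'' to get $x\in F^1\setminus G^1$, whereas what is actually used is the trivial converse $r_\cup(E^1)=r_E(E^1)\subseteq E^0$; this does not affect the validity of the argument.
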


Next, we prove a result concerning regularity.
\begin{proposition}\label{adjequi}
Let $E\cup_mF$ be an adjunction graph such that $G^0$ is negetively invariant in $F^0$ and $m:G\to E$ satisfies the condition {\rm (F3)}. 
Then the following conditions are equivalent:
\begin{enumerate}
\item[{\rm (1)}] $E^0$ is negatively invariant in $(E\cup_mF)^0$, 
\item[{\rm (2)}] $p=(p^1,p^0)$, where $p^1:F^1\to (E\cup_mF)^1$ and $p^0:F^0\to (E\cup_mF)^0$ are defined by~\eqref{canmapp}, satisfies the condition {\rm (F3)}.
\end{enumerate}
\end{proposition}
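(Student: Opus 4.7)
The plan is to prove both implications by case analysis, splitting on whether the vertex in question lies inside $m^0(G^0)$ (or $G^0$) or in its complement. Two local observations will be used repeatedly. First, $F^0\setminus G^0$ is open in $(E\cup_m F)^0$, and any edge of $E\cup_m F$ whose source lies there must come from $F^1\setminus G^1$ (edges of $E^1$ have source in $E^0$, and edges of $G^1$ have source in $G^0$); hence for $w\in F^0\setminus G^0$ the statuses \emph{regular} and \emph{singular} coincide in $F^0$ and in $(E\cup_m F)^0$. Symmetrically, $E^0\setminus m^0(G^0)$ is open in both $E^0$ and $(E\cup_m F)^0$ with matching source structure.

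For $(2)\Rightarrow(1)$, take $v\in E^0\cap (E\cup_m F)^0_{\rm reg}$. If $v\notin m^0(G^0)$, the symmetric observation gives $v\in E^0_{\rm reg}$, and \cite[Proposition~2.8]{katsura1} furnishes an edge $e\in E^1$ with $s_E(e)=v$; since $e\in E^1$, automatically $r_E(e)\in E^0$. Otherwise $v=m^0(w)$ for some $w\in G^0$; the contrapositive of (2) forces $w\in F^0_{\rm reg}$, and negative invariance of $G^0\subseteq F^0$ yields $a\in F^1$ with $s_F(a)=w$ and $r_F(a)\in G^0$. The condition $r_F^{-1}(G^0)\subseteq G^1$ built into the adjunction graph places $a\in G^1$, whereupon $m^1(a)\in E^1$ has source $v$ and range $m^0(r_G(a))\in m^0(G^0)\subseteq E^0$, as desired.

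For $(1)\Rightarrow(2)$, take $w\in F^0_{\rm sing}$ and aim at $p^0(w)\in (E\cup_m F)^0_{\rm sing}$. The case $w\in F^0\setminus G^0$ is immediate from the first observation. For $w\in G^0$ I split on the type of singularity. If $w\in F^0_{\rm inf}$, the identity $s_\cup\circ p^1=p^0\circ s_F$ (condition (F1) for $p$, proved in \Cref{pprop}) combined with properness of $p^1$ shows that any open neighborhood $V$ of $m^0(w)$ with $s_\cup^{-1}(V)$ compact would pull back to a compact $s_F^{-1}((p^0)^{-1}(V))$, contradicting $w\in F^0_{\rm inf}$; hence $m^0(w)\in (E\cup_m F)^0_{\rm inf}\subseteq (E\cup_m F)^0_{\rm sing}$. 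If instead $w\in\overline{F^0_{\rm sink}}^{F^0}$, choose a net $w_i\to w$ with $w_i\in F^0_{\rm sink}$ and, by subnetting, assume all $w_i$ lie either in $F^0\setminus G^0$ or all in $G^0$. In the former case the first observation gives $p^0(w_i)=w_i\in (E\cup_m F)^0_{\rm sink}$, so continuity of $p^0$ puts $m^0(w)$ into $\overline{(E\cup_m F)^0_{\rm sink}}\subseteq (E\cup_m F)^0_{\rm sing}$.

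The main obstacle is the remaining subcase where all $w_i\in G^0$. A brief neighborhood argument shows $G^0\cap F^0_{\rm sink}\subseteq G^0_{\rm sink}$, so $w_i\in G^0_{\rm sing}$, and (F3) for $m$ yields $m^0(w_i)\in E^0_{\rm sing}=E^0_{\rm inf}\cup\overline{E^0_{\rm sink}}^{E^0}$. The difficulty is that $E^0_{\rm sink}$ need not sit inside $(E\cup_m F)^0_{\rm sink}$, because sources of $F^1$-edges can approach an $E^0$-sink from the $G^0$-side. The key trick is to invoke hypothesis~(1) itself: any $v\in E^0_{\rm sink}$ admits no outgoing $E^1$-edge, so (1) forces $v\in (E\cup_m F)^0_{\rm sing}$, whence $E^0_{\rm sink}\subseteq (E\cup_m F)^0_{\rm sing}$. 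Combined with the straightforward inclusion $E^0_{\rm inf}\subseteq (E\cup_m F)^0_{\rm inf}$ (from closedness of $E^1$ in $(E\cup_m F)^1$, proved in \Cref{pprop}) and the closedness of $(E\cup_m F)^0_{\rm sing}$, one obtains $E^0_{\rm sing}\subseteq (E\cup_m F)^0_{\rm sing}$; therefore $m^0(w_i)\in (E\cup_m F)^0_{\rm sing}$ for every $i$, and closedness of this set passes the conclusion to $m^0(w)$.
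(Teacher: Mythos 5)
Your proof is correct, but it follows a genuinely different route from the paper's. The paper proves both implications by transporting along $p$ the compact neighborhoods $W$ that witness regularity (those with $s^{-1}(W)$ compact and $s(s^{-1}(W))=W$), so each direction is a single neighborhood-chasing argument phrased on the regular side. You instead work on the singular side, decomposing $F^0_{\rm sing}=F^0_{\rm inf}\cup\overline{F^0_{\rm sink}}$, splitting on the position of the vertex relative to the gluing locus, and passing to limits along nets; the pivotal intermediate fact you isolate is that hypothesis (1) yields the global inclusion $E^0_{\rm sing}\subseteq (E\cup_mF)^0_{\rm sing}$, through which the $G^0$-supported sinks are pushed via (F3) for $m$. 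Your $(2)\Rightarrow(1)$ is also lighter than the paper's: rather than proving $v\in E^0_{\rm reg}$ for every $v\in E^0\cap(E\cup_mF)^0_{\rm reg}$, you directly exhibit the outgoing edge with range in $E^0$, which is all that negative invariance asks for. The paper's method buys uniformity and brevity; yours buys explicit, reusable auxiliary facts (regular/singular statuses agree away from the gluing locus; $E^0_{\rm sing}\subseteq(E\cup_mF)^0_{\rm sing}$ under (1), which is essentially one direction of the equivalence from \cite[Proposition~2.2]{katsura3} invoked in Proposition~\ref{injreg}). One step deserves an explicit line: when you argue that $v\in E^0_{\rm sink}$ forces $v\in(E\cup_mF)^0_{\rm sing}$, the edge supplied by negative invariance has range in $E^0$ and must therefore lie in $E^1$ because edges of $F^1\setminus G^1$ have range in $F^0\setminus G^0$ (a consequence of $r_F^{-1}(G^0)\subseteq G^1$ from Definition~\ref{adjgraph}); you cannot appeal to Proposition~\ref{adjtb} here, since (F2) for $m$ is not among the hypotheses of this proposition.
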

\begin{proof}
(1) $\Rightarrow$ (2). Let $p^0(w)\in (E\cup_mF)^0_{\rm reg}$ for some $w\in F^0$. There exists a compact neighborhood $W\subseteq (E\cup_mF)^0_{\rm reg}$ of $p^0(w)$ such that $s_\cup^{-1}(W)$ is compact and $s_\cup(s_\cup^{-1}(W))=W$. Consider the neighborhood $(p^0)^{-1}(W)$ of $w$. Then, $s_F^{-1}((p^0)^{-1}(W))=(p^1)^{-1}(s_\cup^{-1}(W))$ because $p$ satisfies (F1). By properness of $p^1$, we get that $s_F^{-1}((p^0)^{-1}(W))$ is compact. 

We need to prove that $(p^0)^{-1}(W)\subseteq s_F(s_F^{-1}((p^0)^{-1}(W)))$. Take any $w\in F^0$ such that $p^0(w)\in W=s_\cup(s_\cup^{-1}(W))\subseteq (E\cup_mF)^0_{\rm reg}$. Then, there is $e\in (E\cup_mF)^1$ such that $s_\cup(e)=p^0(w)$. If $e\in E^1$, then $s_\cup(e)=s_E(e)=p^0(w)$. This implies that $w\in G^0$ and $m^0(w)=s_E(e)$. Since $E$ is a regular closed subgraph and $m$ satisfies (F3), we conclude that $w\in G^0_{\rm reg}$. Therefore, there is $a\in G^1$ such that $s_F(a)=s_G(a)=w$. If $e\in F^1\setminus G^1$, then $s_\cup(e)=p^0(s_F(e))=p^0(w)$. Then, either both $s_F(e)$ and $w$ belong to $ F^0\setminus G^0$ or they both belong to $G^0$. In the former case, $s_F(e)=w$ and we are done. In the later case, $m^0(w)=p^0(s_F(e))$. Again, since $E$ is a regular closed subgraph and $m$ satisfies (F3), we conclude that $w\in G^0_{\rm reg}$.

(2) $\Rightarrow$ (1). Let $v\in E^0\cap (E\cup_mF)^0_{\rm reg}$ and let $V$ be a compact neighborhood of $v$ in $(E\cup_mF)^0$ such that $V\subseteq (E\cup_mF)^0_{\rm reg}$, $s_\cup^{-1}(V)$ is compact, and $s_\cup(s_\cup^{-1}(V))=V$. 
Consider the neighborhood $V\cap E^0$  of $v$ in $E^0$. It is clear that $V\cap E^0$ and $s_E^{-1}(V\cap E^0)$ are compact as closed subsets of compact sets $V$ and $s^{-1}_\cup(V)$, respectively. 
Hence, to prove that $v\in E^0_{\rm reg}$, it suffices to prove that $V\cap E^0\subseteq s_E(s_E^{-1}(V\cap E^0))$. 

From $s_\cup(s_\cup^{-1}(V))=V$, we infer that for every $w\in V\cap E^0$ there is $e\in (E\cup_mF)^1$ such that $s_\cup(e)=w$. If $e\in E^1$, then $w=s_\cup(e)=s_E(e)$. 
If $e\in F^1\setminus G^1$ then $w=s_\cup(e)=p^0(s_F(e))$, which implies that $s_F(e)\in G^0$. Since we assume that $p$ satisfies (F3), we get that $s_F(e)\in F^0_{\rm reg}$.
By negative invariance of $G^0$, there exist $x\in F^1$ such that $s_F(x)=s_F(e)$ and $r_F(x)\in G^1$. Hence, $x\in G^1$ due to positive invariance of~$G^0$, and we obtain that
$w=p^0(s_F(e))=p^0(s_F(x))=m^0(s_G(x))=s_E(m^1(x))$.
\end{proof}

Finally, we provide sufficient conditions leading to a positive answer to the question posed right before the statement of Proposition~\ref{pprop}.
First, we consider unions of topological graphs.
\begin{proposition}\label{adjfinal1}
Let $E\cup F$ be a union graph in which $E\cap F$ is a regular closed subgraph of both $E$ and $F$. Then $E$ and $F$ are regular closed subgraphs of $E\cup F$.
\end{proposition}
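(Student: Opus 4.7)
The plan is to verify, for both $E$ and $F$, the two defining conditions of a regular closed subgraph inside $E \cup F$: being a closed subgraph whose vertex set is negatively invariant, and satisfying the range restriction $r_\cup^{-1}(E^0) \subseteq E^1$ (and the analogue for $F$). Since $m$ is injective the adjunction graph is symmetric in $E$ and $F$, so it suffices to treat $E$ in detail; the assertions for $F$ follow by interchanging the two.

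That $E$ is a closed subgraph of $E \cup F$ is \cref{pprop}; the same argument applies to $F$ once one observes that injectivity of $m$ makes $p^0$ and $p^1$ closed embeddings. For the range conditions I would split any $e \in (E\cup F)^1$ by whether $e \in E^1$ or $e \in F^1 \setminus G^1$. If $e \in F^1 \setminus G^1$ and $r_\cup(e) \in E^0$, then $r_F(e)$ would have to lie both in $F^0 \setminus G^0$ (forced by $r_F^{-1}(G^0) \subseteq G^1$) and in $E^0$, which is impossible. The symmetric statement $r_\cup^{-1}(F^0) \subseteq F^1$ is the substantive one: if $e \in E^1$ with $r_E(e) \in G^0$, regularity of $G$ in $E$ gives $e \in r_E^{-1}(G^0) \subseteq G^1 \subseteq F^1$.

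The main work is negative invariance of $E^0$. Fix $v \in E^0 \cap (E\cup F)^0_{\rm reg}$; by the range condition any witness must lie in $E^1$, so the goal is to produce $a \in E^1$ with $s_E(a) = v$. When $v \in E^0 \setminus G^0$, which is open in $(E\cup F)^0$ as the complement of the closed set $p^0(F^0)$, the edge and sink structures of $E \cup F$ agree locally with those of $E$, so $v \in E^0_{\rm reg}$ and \cite[Proposition~2.8]{katsura1} supplies the edge. The case $v \in G^0$ is the main obstacle: neither $v \in E^0_{\rm reg}$ nor $v \in F^0_{\rm reg}$ can be deduced locally from $v \in (E\cup F)^0_{\rm reg}$, so the hypotheses have to be combined globally. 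I would argue by contradiction: assume $S_v := s_\cup^{-1}(\{v\})$ is disjoint from $E^1$, so $S_v \subseteq F^1 \setminus G^1$. Then $v$ has no out-edge in $G$, and negative invariance of $G^0$ in $F^0$ rules out $v \in F^0_{\rm reg}$; a standard properness argument for $p^1$ gives $v \in F^0_{\rm fin}$, hence $v \in \overline{F^0_{\rm sink}}^{F^0}$. Pick $y_\beta \in F^0_{\rm sink}$ with $y_\beta \to v$. If $y_\beta \in F^0 \setminus G^0$ cofinally, a sink-neighborhood of $y_\beta$ in $F^0$ is already open in $(E\cup F)^0$ and contains no edge-source, which puts $y_\beta$ in $(E\cup F)^0_{\rm sink}$ and contradicts $v \in (E\cup F)^0_{\rm reg}$. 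Otherwise $y_\beta \in G^0$ eventually; if cofinally $y_\beta$ is also an $E$-sink, one glues open sink-neighborhoods of $y_\beta$ from $E^0$ and $F^0$ sharing an identical trace on $G^0$ to produce an open subset of $(E\cup F)^0$ around $y_\beta$ disjoint from $s_\cup((E\cup F)^1)$, contradicting regularity of $v$ again. In the remaining case each $y_\beta$ has an out-edge $e_\beta \in E^1$; since $y_\beta \in F^0_{\rm sink}$ the edge $e_\beta$ cannot lie in $G^1 \subseteq F^1$, and then regularity of $G$ in $E$ forces $r_E(e_\beta) \in E^0 \setminus G^0$. Because $s_E(e_\beta) = y_\beta \to v$, the $e_\beta$ eventually lie in the compact set $s_\cup^{-1}(V)$ witnessing $v \in (E\cup F)^0_{\rm reg}$, so a subnet converges to some $e \in (E\cup F)^1$; closedness of $E^1$ forces $e \in E^1$, and $s_\cup(e) = v$ places $e$ in $S_v \cap E^1$---the desired contradiction. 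This compactness-descent argument, which simultaneously uses properness of $p^1$, regularity of $G$ inside $E$, and closedness of $E^1$ in $(E\cup F)^1$, is the technical heart of the proof.
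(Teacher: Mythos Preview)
Your overall strategy is sound and leads to a correct proof once one step is repaired; the route you take is genuinely different from the paper's.

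\textbf{The gap.} In your final subcase you assert that ``each $y_\beta$ has an out-edge $e_\beta\in E^1$''. The preceding case split only yields $y_\beta\notin E^0_{\rm sink}$, i.e.\ $y_\beta\in\overline{s_E(E^1)}$, which does not place $y_\beta$ in $s_E(E^1)$. The claim is nevertheless true, for a reason you already have available: eventually $y_\beta\in V\subseteq (E\cup F)^0_{\rm reg}$, so \cite[Proposition~2.8]{katsura1} gives some $e\in(E\cup F)^1$ with $s_\cup(e)=y_\beta$; since $y_\beta\in F^0_{\rm sink}$ and $p^0$ is injective, $e$ cannot lie in $F^1\setminus G^1$, hence $e\in E^1$. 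This observation in fact makes the ``$y_\beta$ is an $E$-sink'' subcase vacuous once $y_\beta$ enters $V$, so your compactness extraction handles the entire $y_\beta\in G^0$ branch. (Your ``gluing sink-neighbourhoods with identical trace on $G^0$'' is also heavier than needed: since $E^0$ and $F^0$ are closed in $(E\cup F)^0$ one has $\overline{s_\cup((E\cup F)^1)}=\overline{s_E(E^1)}^{E^0}\cup\overline{s_F(F^1)}^{F^0}$, so any vertex that is a sink in both $E$ and $F$ is automatically a sink in $E\cup F$.)

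\textbf{Comparison with the paper.} The paper first invokes \cref{adjtb} for the range conditions and then \cref{adjequi}, which already shows that negative invariance of $E^0$ and of $F^0$ are equivalent; thus it proves only one of them. It further uses \cite[Proposition~2.2]{katsura3} to rewrite negative invariance of the positively invariant set $F^0$ as the statement $F^0_{\rm sink}\cap(E\cup F)^0_{\rm reg}=\emptyset$. Starting from a hypothetical $w$ in this intersection, the paper reduces to $w\in G^0_{\rm sink}\cap(\overline{E^0_{\rm sink}}\setminus E^0_{\rm sink})\cap F^0_{\rm sink}$ and then finds a nearby vertex in $E^0_{\rm sink}\cap F^0_{\rm sink}\cap(E\cup F)^0_{\rm reg}$---a sink in both constituents yet regular in the union---which is impossible. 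Your argument instead manufactures an actual limit edge via compactness of $s_\cup^{-1}(V)$ and closedness of $E^1$. Both routes end in the same contradiction; the paper's is shorter because it offloads work to \cref{adjequi} and \cite[Proposition~2.2]{katsura3}, while yours is more self-contained but requires the net/subnet bookkeeping.
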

\begin{proof}
By Propositions~\ref{pprop} and~\ref{adjtb}, we can view $E$ and $F$ as closed subgraphs of $E\cup F$ satisfying $r_\cup^{-1}(E^0)\subseteq E^1$ and $r_\cup^{-1}(F^0)\subseteq F^1$, respectively. By Proposition~\ref{adjequi}, it suffices to prove that $F^0$ is negatively invariant in $(E\cup F)^0$. Since $F^0$ is positively invariant, its negative invariance is equivalent to the condition $F^0_{\rm sink}\cap (E\cup F)^0_{\rm reg}=\emptyset$ by~\cite[Proposition~2.2]{katsura3}. Suppose that $w\in F^0_{\rm sink}\cap (E\cup F)^0_{\rm reg}$. Then, there exists $e\in (E\cup F)^1$ such that $s_\cup(e)=w$. If $e\in F^1$, then we get a contradiction, so we suppose that $e\in E^1\setminus G^1$. Then $s_\cup(e)=s_E(e)=w$ and we conclude that $w\in G^0$. Regularity of $w$ in $E\cup F$ implies that $w\in E^0_{\rm fin}$. In turn, we get that $w\in G^0_{\rm fin}$. If $w\notin G^0_{\rm sink}$ then there exists $a\in G^1$ such that $s_F(a)=s_G(a)=w$, leading to another contradiction. Hence, we assume that $s_E(e)=w\in G^0_{\rm sink}\cap E^0_{\rm fin}\cap F^0_{\rm sink}$. If $w$ is regular in $E$, then invariance of $G$ in $E$ contradicts the assumption that $w\in F^0_{\rm sink}$. We infer that $s_E(e)=w\in G^0_{\rm sink}\cap (\overline{E^0_{\rm sink}}\setminus E^0_{\rm sink})\cap F^0_{\rm sink}$. Let $W_F$ be a compact neighborhood of $w$ in $F^0$ such that $s_F^{-1}(W_F)=\emptyset$ and let $V\subseteq (E\cup F)^0_{\rm reg}$ be a compact neighborhood of $w$ in $(E\cup F)^0$ such that $s_\cup^{-1}(V)$ is compact and $s_\cup(s_\cup^{-1}(V))=V$. Since $w\in \overline{F^0_{\rm sink}}\setminus F^0_{\rm sink}$ and $W_E\cap V\cap F^0$ is a compact neighborhood of $w$ both in $E^0$ and $F^0$, we obtain that there exists a vertex $v\in V\cap E^0_{\rm sink}\cap F^0_{\rm sink}$, i.e. a regular vertex in $E\cup F$ which is a sink both in $E$ and $F$. We thus arrive at a contradiction and conclude that $F^0_{\rm sink}\cap (E\cup F)^0_{\rm reg}=\emptyset$.
\end{proof}
Proposition~\ref{adjfinal1} motivates the following definition, which we will use in the sequel.
\begin{definition}
A union $E\cup F$ of topological graphs $E$ and $F$ is called {\em regular} if $E\cap F$ is a regular closed subgraph of both $E$ and $F$.
\end{definition}

Next, we come back to general adjunction graphs.
\begin{proposition}\label{adjfinal2}
Let $E\cup_mF$ be an adjunction graph such that $G$ is a closed regular subgraph of $F$ and $m:G\to E$ is a regular factor map. If we have that
\begin{equation}\label{boundcond}
G^0_{\rm sink}\cap \left(\overline{F^0_{\rm sink}}\setminus F^0_{\rm sink}\right)\cap F^0_{\rm fin}=\emptyset,
\end{equation}
then $E$ is a regular closed subgraph of $E\cup_mF$ and $p:F\to E\cup_mF$ is a regular factor map.
\end{proposition}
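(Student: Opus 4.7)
The plan is to invoke Propositions~\ref{pprop}, \ref{adjtb}, and~\ref{adjequi} to reduce the statement to a single regularity assertion about $p$, and then to establish that assertion by a case analysis in the spirit of Proposition~\ref{adjfinal1}, with the boundary hypothesis supplying the decisive step in the hard case.

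Since $G$ is a regular closed subgraph of $F$, we have $r_F^{-1}(G^0)\subseteq G^1$ and $G^0$ is negatively invariant in $F^0$; since $m:G\to E$ is a regular factor map, $m$ satisfies {\rm(F1)}--{\rm(F3)}. Propositions~\ref{pprop} and~\ref{adjtb} then give that $E$ is a closed subgraph of $E\cup_mF$ with $r_\cup^{-1}(E^0)\subseteq E^1$ and that $p^0,p^1$ are proper and satisfy {\rm(F1)} and {\rm(F2)}. Proposition~\ref{adjequi} now applies and asserts that the remaining two conditions---regular closedness of $E$ in $E\cup_mF$ (i.e.\ negative invariance of $E^0$) and {\rm(F3)} for $p$---are equivalent, so proving either establishes the full statement.

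I would verify {\rm(F3)} for $p$: namely, that $p^0(w)\in(E\cup_mF)^0_{\rm reg}$ forces $w\in F^0_{\rm reg}$. Finiteness of $w$ is routine, since properness of $p^0,p^1$ together with {\rm(F1)} for $p$ lets a compact neighborhood $V$ of $p^0(w)$ with compact $s_\cup^{-1}(V)$ be pulled back to analogous data at $w$. The heart of the argument is to show $w\notin\overline{F^0_{\rm sink}}$. Arguing by contradiction, take a net $w_\alpha\in F^0_{\rm sink}$ converging to $w$, shrink $V\subseteq(E\cup_mF)^0_{\rm reg}$ so that eventually $w_\alpha\in(p^0)^{-1}(V)$, and split into two cases. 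If $w_\alpha\in F^0\setminus G^0$, then an $F$-sink in this open subspace of $(E\cup_mF)^0$ is automatically a sink of $E\cup_mF$ (no edge of $E^1$ or $F^1\setminus G^1$ can begin at a point of $F^0\setminus G^0$ unless an $F$-edge does), so $p^0(w_\alpha)=w_\alpha\in V\cap(E\cup_mF)^0_{\rm sink}$, contradicting $V\subseteq(E\cup_mF)^0_{\rm reg}$. If $w_\alpha\in G^0$ instead, then $w_\alpha\in G^0_{\rm sink}\cap F^0_{\rm fin}$, and the sublemma $G^0\cap F^0_{\rm reg}\subseteq s_G(G^1)$---a direct consequence of $r_F^{-1}(G^0)\subseteq G^1$ combined with the negative invariance of $G^0$ in $F^0$---rules out $F$-regularity of $w_\alpha$, so $w_\alpha\in\overline{F^0_{\rm sink}}$; the boundary hypothesis then pins $w_\alpha\in F^0_{\rm sink}$ itself, and a neighborhood analysis exploiting properness of $p^0,p^1$ transports singularity across $m^0$ to place either a sink or an infinite emitter of $E\cup_mF$ arbitrarily close to $p^0(w)$, contradicting its regularity.

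The main obstacle is this second case. An $F$-sink $w_\alpha$ in $G^0$ need not make $m^0(w_\alpha)\in E^0$ a sink in $E\cup_mF$, since $E$-edges at $m^0(w_\alpha)$ or $F^1\setminus G^1$-edges at other preimages in $(m^0)^{-1}(m^0(w_\alpha))$ may still emanate from $m^0(w_\alpha)$ in $E\cup_mF$. The boundary hypothesis $G^0_{\rm sink}\cap(\overline{F^0_{\rm sink}}\setminus F^0_{\rm sink})\cap F^0_{\rm fin}=\emptyset$ is calibrated precisely to eliminate the configurations in which such auxiliary edges would keep $m^0(w_\alpha)$ out of the singular set of $E\cup_mF$; combined with the sublemma, it allows the net argument to close. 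The bookkeeping around nets, open neighborhoods, and preimages under $m^0$ is the technical core of the argument.
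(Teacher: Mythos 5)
Your reduction via Propositions~\ref{pprop} and~\ref{adjtb} to the equivalence in Proposition~\ref{adjequi} is correct and matches the paper, but you then elect to prove condition (2) of that equivalence ((F3) for $p$) directly, whereas the paper proves condition (1) (negative invariance of $E^0$), and your route has a genuine gap exactly where the hypothesis~\eqref{boundcond} must do its work. Negative invariance of $E^0$ reduces, by \cite[Proposition~2.2]{katsura3}, to the pointwise statement $E^0_{\rm sink}\cap(E\cup_mF)^0_{\rm reg}=\emptyset$, which the paper settles with a single-edge argument: a vertex $v\in E^0_{\rm sink}\cap(E\cup_mF)^0_{\rm reg}$ emits some $e$, necessarily in $F^1\setminus G^1$, and one checks that $s_F(e)$ is forced into $G^0_{\rm sink}\cap(\overline{F^0_{\rm sink}}\setminus F^0_{\rm sink})\cap F^0_{\rm fin}$, contradicting~\eqref{boundcond}. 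Your contrapositive of (F3) instead requires showing $w\notin\overline{F^0_{\rm sink}}$, hence a net argument, and your second case (approximating $F$-sinks $w_\alpha$ lying in $G^0$) is not actually carried out.

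Concretely: in that case $w_\alpha\in G^0_{\rm sink}\cap F^0_{\rm sink}$, so~\eqref{boundcond} says nothing about $w_\alpha$ (it concerns points of $\overline{F^0_{\rm sink}}\setminus F^0_{\rm sink}$), and your remark that the hypothesis ``pins $w_\alpha\in F^0_{\rm sink}$'' is vacuous, as is the use of your (correct) sublemma to ``rule out $F$-regularity'' of a vertex that is a sink by choice of the net. All that (F3) for $m$ gives you is $m^0(w_\alpha)\in E^0_{\rm sing}$, and $E$-singularity does not transfer to $(E\cup_mF)$-singularity: $m^0(w_\alpha)$ may emit $E$-edges, or edges of $F^1\setminus G^1$ issuing from other points of $(m^0)^{-1}(m^0(w_\alpha))$, so it need be neither a sink nor an infinite emitter of $E\cup_mF$. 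Tracing the remaining possibilities (the subcase $m^0(w_\alpha)\in E^0_{\rm inf}$ transfers by closedness of $E^1$ in $(E\cup_mF)^1$; the subcase $m^0(w_\alpha)\in\overline{E^0_{\rm sink}}$ does not) shows that closing your argument requires precisely the statement $E^0_{\rm sink}\cap(E\cup_mF)^0_{\rm reg}=\emptyset$, i.e.\ the other, easier side of the equivalence in Proposition~\ref{adjequi}, established by exhibiting the edge $e\in F^1\setminus G^1$ described above. That step, where~\eqref{boundcond} actually bites, is the one you have replaced with the assertion that a neighborhood analysis ``transports singularity across $m^0$''; as written the proof is incomplete. (Your first case, sinks in $F^0\setminus G^0$, and the finiteness part are fine.)
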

\begin{proof}
From Propositions~\ref{pprop} and~\ref{adjtb}, we conclude that $E$ is a closed subgraph satisfying the condition $r_\cup^{-1}(E^0)\subseteq E^1$ and $p$ consists of proper maps satisfying the conditions (F1) and (F2). 
Since $E^0$ is positively invariant in $(E\cup_mF)^0$, its negative invariance is equivalent to the condition $E^0_{\rm sink}\cap (E\cup_mF)^0_{\rm reg}=\emptyset$ by~\cite[Proposition~2.2]{katsura3}. 
We prove that that the condition~\eqref{boundcond} implies negative invariance of $E^0$ in $(E\cup_mF)^0$. Then, the desired result will follow from Proposition~\ref{adjequi}. 

Suppose that $v\in E^0_{\rm sink}\cap (E\cup_mF)^0_{\rm reg}$. Then there exist $e\in (E\cup_mF)^1$ such that $s_\cup(e)=v$. If $e\in E^1$, we get a contradiction. If $e\in F^1\setminus G^1$, then $s_\cup(v)=p^0(s_F(e))$. This means that $s_F(e)\in G^0$. If $s_F(e)$ is regular in $F$ or $s_F(e)\notin G^0_{\rm sink}$, we get a contradiction. Hence, we suppose that $s_F(e)\in G^0_{\rm sink}\cap F^0_{\rm sing}$. It follows that $s_F(e)\notin F^0_{\rm sink}$. Next, by properness of $p^1$ and $p^0$ and regularity of $v$ in $E\cup_mF$, we conclude that $s_F(e)\in F^0_{\rm fin}$. Therefore, $s_F(e)\in G^0_{\rm sink}\cap (\overline{F^0_{\rm sink}}\setminus F^0_{\rm sink})\cap F^0_{\rm fin}$, which contradicts the condition~\eqref{boundcond}.
\end{proof}

Clearly, every adjunction graph $E\cup_m F$ with non-injective $m$ and discrete $F$ is regular. This means that there are many examples of adjunction graphs satisfying the condition~\eqref{boundcond} which are not unions. Next, we give an example of a regular union graph violating the condition~\eqref{boundcond} of Proposition~\ref{adjfinal2}.
\begin{example}
Let $F^0=[0,2]$ and let $F^1=(0,1)\cup\{2\}\subseteq [0,2]$. Next, let $r_F$ be the inclusion of $F^1$ in $F^0$ and let
\[
s_F(x):=\begin{cases}\frac{x}{2} & x\in (0,1)\\1 & x=2\end{cases}.
\]
Note that $s_F(F^1)$ is not preopen, so $F$ is not a row-finite graph. Put $G^0=\{1\}$ and $G^1=\emptyset$. It is straightforward to check that $G$ is a regular closed subgraph of $F$ and that
\[
G^0_{\rm sink}\cap \left(\overline{F^0_{\rm sink}}\setminus F^0_{\rm sink}\right)\cap F^0_{\rm fin}=\{1\}\neq \emptyset,
\]
violating the condition~\eqref{boundcond} of Proposition~\ref{adjfinal2}. Next, let $E^0=\{1,3\}$ and $E^1=\emptyset$. Then, clearly $G$ is a regular closed subgraph of $E$. Hence, $E\cup F$ is a regular union.
\end{example}

We end this section by identifying another class of adjunction graphs satisfying the condition~\eqref{boundcond} of Proposition~\ref{adjfinal2}. Let us now show that row-finitness implies the condition~\eqref{boundcond} of Proposition~\ref{adjfinal2}. In fact, a weaker condition suffices.
\begin{proposition}\label{rowimply}
Let $F$ be a topological graph in which $s_F(F^1)$ is a preopen subset of $F^0$. Then $(\overline{F^0_{\rm sink}}\setminus F^0_{\rm sink})\cap F^0_{\rm fin}=\emptyset$.
\end{proposition}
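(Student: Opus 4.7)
My approach would be by contradiction. Suppose $v \in (\overline{F^0_{\rm sink}} \setminus F^0_{\rm sink}) \cap F^0_{\rm fin}$, and use $v \in F^0_{\rm fin}$ to pick a neighborhood $V$ of $v$ with $s_F^{-1}(V)$ compact; write $V^\circ$ for its interior. Since $s_F$ is continuous and $F^0$ is Hausdorff, the image $s_F(s_F^{-1}(V)) = V \cap s_F(F^1)$ is compact, hence closed in $F^0$.

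The central step is the inclusion $V^\circ \cap \overline{s_F(F^1)} \subseteq V \cap s_F(F^1)$. I would establish this via the standard identity that for any open set $U$ and any subset $A$ one has $U \cap \overline{A} \subseteq \overline{U \cap A}$; applying this with $U = V^\circ$ and $A = s_F(F^1)$, and combining with the closedness of $V \cap s_F(F^1)$, gives the claim. Since $v \notin F^0_{\rm sink}$ means exactly $v \in \overline{s_F(F^1)}$, and since $v \in V^\circ$, we conclude $v \in s_F(F^1)$.

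Preopenness then enters: $v \in s_F(F^1) \subseteq \mathrm{Int}(\overline{s_F(F^1)})$. But $\mathrm{Int}(\overline{s_F(F^1)})$ is an open set contained in $\overline{s_F(F^1)} = F^0 \setminus F^0_{\rm sink}$, hence disjoint from $F^0_{\rm sink}$; and any open set disjoint from $F^0_{\rm sink}$ is automatically disjoint from $\overline{F^0_{\rm sink}}$ (its complement is closed and contains $F^0_{\rm sink}$). This contradicts $v \in \overline{F^0_{\rm sink}}$. The whole argument amounts to combining local finiteness at $v$, which yields local closedness of $s_F(F^1)$ near $v$, with preopenness, which upgrades $v \in s_F(F^1)$ to an interior condition inside $\overline{s_F(F^1)}$; the only mildly subtle point is the topological identity in the central step, but it is a routine unpacking of the definition of closure.
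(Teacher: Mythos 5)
Your proof is correct and takes essentially the same route as the paper's: first show that a vertex in $F^0_{\rm fin}\setminus F^0_{\rm sink}$ actually lies in $s_F(F^1)$, then use preopenness to place it in $\mathrm{Int}(\overline{s_F(F^1)})=F^0\setminus\overline{F^0_{\rm sink}}$, contradicting $v\in\overline{F^0_{\rm sink}}$. The only difference is that the paper obtains the inclusion $F^0_{\rm fin}\setminus F^0_{\rm sink}\subseteq s_F(F^1)$ by citing Katsura's Lemma~1.22, whereas you prove it directly via the compactness/closure argument, which is a correct inlining of that lemma.
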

\begin{proof}
Suppose that $(\overline{F^0_{\rm sink}}\setminus F^0_{\rm sink})\cap F^0_{\rm fin}$ is not empty. From~\cite[Lemma~1.22]{katsura1}, we conclude that
\[
\left(\overline{F^0_{\rm sink}}\setminus F^0_{\rm sink}\right)\cap F^0_{\rm fin}\subseteq F^0_{\rm fin}\setminus F^0_{\rm sink}\subseteq s_F(F^1).
\]
Since $s_F(F^1)$ is preopen, we infer that $s_F(F^1)\subseteq F^0_{\rm reg}$, so we get a contradiction.
\end{proof}


\section{Pullbacks of topological graph C*-algebras}

The aim of this section is to prove that under suitable conditions the topological graph C*-algebra of a regular adjunction graph is a pullback of the C*-algebras of the topological graphs being glued. To this end, we study the admissible pairs coming from kernels of $*$-homomorphisms induced by factor maps. Then we discuss how our result generalizes pushout-to-pullback theorems proved in the context of directed graphs.

\subsection{Main results}

First, we study kernels of $*$-homomorphisms induced by regular factor maps.
\begin{lemma}\label{factker}
Let $G$ be a regular closed subgraph of a topological graph $E$ and let $\mu:C^*(E)\to C^*(G)$ be the $*$-homo\-mor\-phism induced by the inclusion $G\subseteq E$ and given by Proposition~\ref{functc*}. Then  $\ker\mu=I_\rho$ for the admissible pair $\rho=(G^0,G^0_{\rm sing})$.
	\end{lemma}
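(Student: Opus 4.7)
The approach is to invoke the ideal classification in \Cref{idealclass}: since $\mu$ is gauge-equivariant by \Cref{functc*}, the ideal $\ker\mu$ is gauge-invariant, and the plan is to verify that its associated admissible pair $\rho_{\ker\mu}$ coincides with $(G^0,G^0_{\rm sing})$. First I would check admissibility of $(G^0,G^0_{\rm sing})$. Closedness and invariance of $G^0$ are encoded in $G$ being a regular closed subgraph, and the proof of \Cref{injreg} in fact delivers the equality $G^1 = r_E^{-1}(G^0)$. The only subtle inclusion is $G^0_{\rm sing} \subseteq E^0_{\rm sing}\cap G^0$, equivalently $G^0 \cap E^0_{\rm reg} \subseteq G^0_{\rm reg}$. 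Given $v \in G^0 \cap E^0_{\rm reg}$, I would take a compact $E^0$-neighborhood $W$ of $v$ contained in $E^0_{\rm reg}$ with $s_E^{-1}(W)$ compact; the intersection $W \cap G^0$ then witnesses $v \in G^0_{\rm fin}$ (using closedness of $G^1$ in $E^1$), while negative invariance of $G^0$ in $E^0$ forces $W \cap G^0 \subseteq s_G(G^1)$, placing $v$ in the $G^0$-interior of $\overline{s_G(G^1)}$ and hence outside $\overline{G^0_{\rm sink}}$.

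Next I would compute $Y^0_{\ker\mu} = G^0$. The intertwining $\mu \circ t^0_E = t^0_G \circ \mu^0$ with $\mu^0(f) = f|_{G^0}$, together with the injectivity of $t^0_G$ (a standard Cuntz--Pimsner fact), gives $t^0_E(f) \in \ker\mu$ if and only if $f|_{G^0} = 0$. Urysohn separation in $E^0$ then yields $Y^0_{\ker\mu} = G^0$ at once. Admissibility of $\rho_{\ker\mu}$ immediately sandwiches $G^0_{\rm sing} \subseteq Z^0_{\ker\mu} \subseteq E^0_{\rm sing}\cap G^0$, so only the upper inclusion $Z^0_{\ker\mu} \subseteq G^0_{\rm sing}$ has real content.

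For that last inclusion I would show that every $v \in G^0_{\rm reg}$ admits $f \in C_0(E^0)$ with $f(v) \neq 0$ and $t^0_E(f) \in \ker\mu + (t^1_E)^{(1)}(\mathcal{K}(X_E))$. The recipe is: pick $g \in C_c(G^0_{\rm reg})$ with $g(v) \neq 0$ (possible since $G^0_{\rm reg}$ is open in $G^0$), Tietze-extend $g$ to $f \in C_0(E^0)$, and lift $\varphi_{X_G}(g) \in \mathcal{K}(X_G)$ to some $T \in \mathcal{K}(X_E)$ mapping to $\varphi_{X_G}(g)$ under the map $\mathcal{K}(X_E) \to \mathcal{K}(X_G)$ induced by $\mu^1$. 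Covariance on $G$ then yields
\[
\mu\bigl(t^0_E(f) - (t^1_E)^{(1)}(T)\bigr) = t^0_G(g) - (t^1_G)^{(1)}(\varphi_{X_G}(g)) = 0,
\]
so $t^0_E(f) - (t^1_E)^{(1)}(T) \in \ker\mu$, as desired.

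The hard part is precisely this lifting, and it is the very subtlety flagged in the discussion preceding \Cref{pprop}: a vertex regular in $G$ may well be singular in $E$, so the naive candidate $T = \varphi_{X_E}(f)$ need not lie in $\mathcal{K}(X_E)$ because $f$ need not belong to the Katsura ideal $J_{X_E}$. To sidestep this I would write $\varphi_{X_G}(g)$ as a finite sum of rank-one operators $\theta_{\xi_i,\eta_i}$ with $\xi_i,\eta_i$ having small compact support in $G^1$, trivializing the local-homeomorphism $r_G$ over a compact neighborhood of $v$ in $G^0_{\rm reg}$ where $g$ is supported. Each $\xi_i,\eta_i$ is then extended to a compactly supported element of $C_c(E^1) \subseteq X_E$ using the closed inclusion $G^1 \subseteq E^1$, and the required lift is $T = \sum_i \theta_{\xi_i',\eta_i'} \in \mathcal{K}(X_E)$.
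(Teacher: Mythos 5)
Your proof is correct, but it takes a genuinely different route from the paper's. The paper, after identifying $Y^0_{\ker\mu}=G^0$ exactly as you do, determines $Z^0$ indirectly: it forms Katsura's auxiliary graph $E_\rho$ from \eqref{katspush} for $\rho=(G^0,Z^0)$, factors $\mu$ as $\kappa\circ\nu$ through $C^*(E_\rho)$ using \cite[Remark~3.14]{katsura3} (which gives $\ker\nu=I_\rho$), deduces that $\kappa$ is injective, and then invokes \cite[Proposition~2.9]{katsura2} to force $W^0_\rho=Z^0\cap G^0_{\rm reg}=\emptyset$, i.e.\ $Z^0=G^0_{\rm sing}$. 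You instead compute the pair $(Y^0_{\ker\mu},Z^0_{\ker\mu})$ of \eqref{admideal} head-on, and the content of the upper bound $Z^0_{\ker\mu}\subseteq G^0_{\rm sing}$ is your explicit lift of $\varphi_{X_G}(g)$, $g\in C_c(G^0_{\rm reg})$, to a finite sum of rank-one operators in $\mathcal{K}(X_E)$ by trivializing $r_G$ over a compact neighborhood and Tietze-extending the compactly supported vectors along the closed inclusion $G^1\subseteq E^1$; this finite-rank decomposition is legitimate (it is essentially \cite[Lemma~1.16 and Proposition~1.24]{katsura1}), and the covariance computation $\mu\bigl(t^0_E(f)-(t^1_E)^{(1)}(T)\bigr)=t^0_G(g)-(t^1_G)^{(1)}(\varphi_{X_G}(g))=0$ then closes the argument. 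What the paper's route buys is brevity, at the cost of leaning on several of Katsura's structural results about $E_\rho$; what yours buys is a self-contained argument that makes visible exactly where regularity of the subgraph enters. Two minor economies: your opening verification that $(G^0,G^0_{\rm sing})$ is admissible is precisely condition (F3) for the inclusion $G\hookrightarrow E$, so it can be replaced by a citation of \Cref{injreg} together with \cite[Proposition~2.2]{katsura3}; and admissibility of the target pair also follows a posteriori once you have shown $\rho_{\ker\mu}=(G^0,G^0_{\rm sing})$, since pairs of the form $\rho_I$ are admissible by \cite[Proposition~2.8]{katsura3}.
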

	\begin{proof}
	Since $\mu$ is gauge-equivariant, $\ker\mu$ is a gauge-invariant ideal. Therefore, $\ker\mu$ has to be of the form $I_\rho$ for some admissible pair $\rho=(Y^0,Z^0)$ by Theorem~\ref{idealclass}. 
First, recall that
	\[
	t^0_E(C_0(E^0\setminus Y^0))=\ker\mu\cap t^0_E(C_0(E^0)).
	\]
For any $f\in C_0(E^0)$, $t^0_E(f)\in\ker\mu$ if and only if $f(v)=0$ for all $v\in G^0$ which implies that $Y^0=G^0$. Consider the topological graph $E_\rho$ defined by~\eqref{katspush}, where $\rho=(G^0,Z^0)$. There is a~regular factor map $n:E_\rho\to E$ given by the inclusions $G\subseteq E$, $W_\rho^0\subseteq E$, and $r_{G}^{-1}(W_\rho)\subseteq E^1$, which induces a~surjective $*$-homomorphism $\nu:C^*(E)\to C^*(E_\rho)$ whose kernel is $I_\rho=\ker\mu$ (see~\cite[Remark~3.14]{katsura3}). Next, one can prove that $G$ is a regular closed subgraph of $E_\rho$ (see~\cite[Remark~3.24]{katsura2}). Hence, we obtain an injective regular factor map $k:G\to E_\rho$ inducing a $*$-homomorphism $\kappa:C^*(E_\rho)\to C^*(G)$. From the definitions of the maps involved, we get a~commutative diagram of $*$-homomorphisms:
\[
\begin{tikzcd}
	{C^*(E)} \arrow[rd,"\mu"] \arrow[d,"\nu"']& \\
    C^*(E_\rho) \arrow[r,"\kappa"] &  C^*(G).
\end{tikzcd}
\]
Indeed, it is clear that we have the equality $m=n\circ k$ of regular factor maps. Since $\ker\nu=\ker\mu$, we conclude that $\ker\kappa=\{0\}$. It follows from~\cite[Proposition~2.9]{katsura2} that $k$ is surjective which can only happen when $W_\rho=Z^0\cap G^0_{\rm reg}=\emptyset$. In turn, we infer that $Z^0=G^0_{\rm sing}$, which ends the proof. 
	\end{proof}

Now, let $E\cup_m F$ be a regular adjunction graph.  Then we have a commutative diagram
\begin{equation}\label{factdiag}
 \begin{tikzcd}[ampersand replacement=\&]
	\& {E\cup_mF} \\
	{E} \&\& {F} \\
	\& {G}
	\arrow[, right hook->, from=2-1, to=1-2]
	\arrow["m", <-, from=2-1, to=3-2]
	\arrow["p", <-, from=1-2, to=2-3]
	\arrow[, right hook->, from=3-2, to=2-3]
\end{tikzcd}
\end{equation}
inducing, by Proposition~\ref{functc*}, a commutative diagram of $\mathbb{T}$-equivariant $*$-homomorphisms
\begin{equation}\label{c*diag}
    \begin{tikzcd}[ampersand replacement=\&]
	\& {C^*(E\cup_mF)} \\
	{C^*(E)} \&\& {C^*(F).} \\
	\& {C^*(G)}
	\arrow["\pi_E"',->>, from=1-2, to=2-1]
	\arrow["\mu_E"', ->, from=2-1, to=3-2]
	\arrow["\pi_F", ->, from=1-2, to=2-3]
	\arrow["\mu_F", ->>, from=2-3, to=3-2]
\end{tikzcd}
\end{equation}

Before proceeding further, we recall the notion of a pullback from category theory. Let $\mathcal{C}$ be any category and let $f_X:X\to Z$ and $f_Y:Y\to Z$ be morphisms in $\mathcal{C}$. A {\em pullback} of $f_X$ and $f_Y$ is an object $P$ in $\mathcal{C}$ together with morphisms $p_X:P\to X$ and $p_Y:P\to Y$ making the diagram
\begin{equation}\label{pulldiag}
\begin{tikzcd}[ampersand replacement=\&]
	\& P \\
	X \& \& Y \\
	\& Z
	\arrow["p_X"',->, from=1-2, to=2-1]
	\arrow["f_X"', ->, from=2-1, to=3-2]
	\arrow["p_Y", ->, from=1-2, to=2-3]
	\arrow["f_Y", ->, from=2-3, to=3-2]
\end{tikzcd}
\end{equation}
commute. Moreover, the triple $(P,p_X,p_Y)$ satisfies the following universal property: if there is $D\in\mathcal{C}$ and morphisms $g_X:D\to X$ and $g_Y:D\to Y$ such that $f_X\circ g_X=f_Y\circ g_Y$, then there is a~unique morphism $u:P\to D$ such that $g_X\circ u=p_X$ and $g_Y\circ u=p_Y$. It follows that $P$ is unique up to an isomorphism. We say that the diagram~\eqref{pulldiag} is a {\em pullback diagram} in the category $\mathcal{C}$. 

It is well known that in the category of C*-algebras and $*$-homomorphisms, the pullback of $\mu_1:A\to C$ and $\mu_2:B\to C$ is
\[
A\oplus_CB:=\{(a,b)\in A\oplus B~:~\mu_1(a)=\mu_2(b)\}
\]
together with the projections onto $A$ and $B$. If the $*$-homomorphisms $\mu_1$ and $\mu_2$ are $\mathbb{T}$-equivariant, then the same C*-algebra with the natural diagonal action and the same projections defines the pullback in the category of $\mathbb{T}$-C*-algebras and $\mathbb{T}$-equivariant $*$-homomorphisms.

Now, we turn our attention to the case of regular unions of topological graphs (see Definition~\ref{adjgraph}) which are more tractable due to Proposition~\ref{adjfinal1}. In this case, we have the following commutative diagram
\begin{equation}\label{c*diaginj}
    \begin{tikzcd}[ampersand replacement=\&]
	\& {C^*(E\cup F)} \\
	{C^*(E)} \&\& {C^*(F).} \\
	\& {C^*(E\cap F)}
	\arrow["\pi_E"',->>, from=1-2, to=2-1]
	\arrow["\mu_E"', ->>, from=2-1, to=3-2]
	\arrow["\pi_F", ->>, from=1-2, to=2-3]
	\arrow["\mu_F", ->>, from=2-3, to=3-2]
\end{tikzcd}
\end{equation} 
We have the following result:
\begin{theorem} \label{mainthm}
Let $E\cup F$ be a regular union graph such that
\begin{equation}\label{maincond}
E^0_{\rm sing}\cap (E\cap F)^0_{\rm reg}\subseteq F^0_{\rm reg}.
\end{equation}
Then, the diagram~\eqref{c*diaginj} is a pullback diagram in the category of $\mathbb{T}$-C*-algebras and $\mathbb{T}$-$*$-ho\-mo\-mor\-phisms. In other words, there is a $\mathbb{T}$-equivariant $*$-isomorphism
\[
C^*(E\cup F)\cong C^*(E)\oplus_{C^*(E\cap F)} C^*(F).
\]
\end{theorem}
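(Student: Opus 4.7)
The plan is to use the universal property of pullbacks in the category of $\mathbb{T}$-C*-algebras to produce a gauge-equivariant $*$-homomorphism $\Phi \colon C^*(E\cup F) \to C^*(E) \oplus_{C^*(E\cap F)} C^*(F)$, $x \mapsto (\pi_E(x), \pi_F(x))$, and then verify that $\Phi$ is an isomorphism by separately establishing injectivity and surjectivity. Both will rest on the ideal classification in Theorem~\ref{idealclass}.

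For injectivity, $\ker\Phi = \ker\pi_E \cap \ker\pi_F$ is gauge-invariant. Since $E$ and $F$ are regular closed subgraphs of $E\cup F$ by Proposition~\ref{adjfinal1}, we have $\pi_E(t^0_{E\cup F}(f)) = t^0_E(f|_{E^0})$ and $\pi_F(t^0_{E\cup F}(f)) = t^0_F(f|_{F^0})$, so $t^0_{E\cup F}(f) \in \ker\Phi$ iff $f$ vanishes on $E^0\cup F^0 = (E\cup F)^0$. Thus the admissible pair of $\ker\Phi$ has $Y^0 = (E\cup F)^0$, admissibility forces $Z^0 = (E\cup F)^0_{\rm sing}$, and Theorem~\ref{idealclass} gives $\ker\Phi = 0$.

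Since all four maps in~\eqref{c*diaginj} are surjective, surjectivity of $\Phi$ follows from the standard pullback criterion once we show $\ker\pi_E + \ker\pi_F = \ker(\mu_E \circ \pi_E)$. The composition $E\cap F \hookrightarrow E \hookrightarrow E\cup F$ is a composition of regular factor maps (using the hypothesis together with Proposition~\ref{adjfinal1}), hence a regular factor map, so $E\cap F$ is a regular closed subgraph of $E\cup F$ and induces $\mu_E\circ\pi_E$. Applying Lemma~\ref{factker} three times yields admissible pairs $\rho_E = (E^0, E^0_{\rm sing})$, $\rho_F = (F^0, F^0_{\rm sing})$, and $\sigma = ((E\cap F)^0, (E\cap F)^0_{\rm sing})$ for $\ker\pi_E$, $\ker\pi_F$, and $\ker(\mu_E\circ\pi_E) = I_\sigma$, respectively. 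The inclusion $\ker\pi_E + \ker\pi_F \subseteq I_\sigma$ is automatic from the commutativity of~\eqref{c*diaginj}.

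The main obstacle is the reverse inclusion. Let $\rho' = (Y'^0, Z'^0)$ be the admissible pair of $\ker\pi_E + \ker\pi_F$. The inclusion-reversing correspondence converts the chain $\ker\pi_E,\ker\pi_F \subseteq \ker\pi_E + \ker\pi_F \subseteq I_\sigma$ into $\sigma \subseteq \rho' \subseteq \rho_E \cap \rho_F$ componentwise, which immediately pins down $Y'^0 = (E\cap F)^0$ and constrains $(E\cap F)^0_{\rm sing} \subseteq Z'^0 \subseteq E^0_{\rm sing}\cap F^0_{\rm sing}$. The last step is exactly where~\eqref{maincond} enters: it yields the reverse inclusion $E^0_{\rm sing}\cap F^0_{\rm sing} \subseteq (E\cap F)^0_{\rm sing}$, equivalently $(E\cap F)^0_{\rm reg} \subseteq E^0_{\rm reg}\cup F^0_{\rm reg}$, because any $v \in (E\cap F)^0_{\rm reg}$ either lies in $E^0_{\rm reg}$ or else lies in $E^0_{\rm sing}\cap (E\cap F)^0_{\rm reg}$, which by~\eqref{maincond} forces $v \in F^0_{\rm reg}$. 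This pins down $Z'^0 = (E\cap F)^0_{\rm sing}$, so $\rho' = \sigma$, the ideals coincide, and $\Phi$ is a $\mathbb{T}$-equivariant isomorphism.
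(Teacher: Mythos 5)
Your proof is correct, but it organizes the argument differently from the paper, which verifies the three conditions of Pedersen's pullback criterion: (P1) $\ker\pi_E\cap\ker\pi_F=\{0\}$, (P2) $\mu_F^{-1}(\mu_E(C^*(E)))=\pi_F(C^*(E\cup F))$, and (P3) $\pi_E(\ker\pi_F)=\ker\mu_E$. Your injectivity step coincides with (P1), though you prove it more directly: the paper's \Cref{capker} invokes Katsura's union-of-admissible-pairs result together with the identity $(E\cup F)^0_{\rm sing}=E^0_{\rm sing}\cup F^0_{\rm sing}$, whereas you compute $Y^0_{\ker\Phi}$ from the formula \cref{admideal} and let admissibility force the $Z^0$-component; both are fine. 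For surjectivity, your condition $\ker\pi_E+\ker\pi_F=\ker(\mu_E\circ\pi_E)$ is equivalent to (P3) (apply $\pi_E$ one way and $\pi_E^{-1}$ the other, using surjectivity of $\pi_E$), but you establish it entirely inside $C^*(E\cup F)$ by sandwiching the admissible pair of $\ker\pi_E+\ker\pi_F$ between $\sigma=((E\cap F)^0,(E\cap F)^0_{\rm sing})$ and the componentwise intersection of $\rho_E$ and $\rho_F$; the paper's \Cref{reglemma} instead descends to $C^*(E)$ and computes the pair of $\pi_E(\ker\pi_F)$ by hand, using Tietze extensions and a Urysohn function for the $Z^0$-component. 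Your order-theoretic sandwich is arguably cleaner and isolates exactly where \cref{maincond} enters (pinning $Z'^0$ down to $(E\cap F)^0_{\rm sing}$), at the cost of two small extra verifications you should make explicit: that a composite of regular factor maps is again a regular factor map (so that $E\cap F$ is a regular closed subgraph of $E\cup F$ and $\mu_E\circ\pi_E$ is the induced homomorphism, letting you apply \Cref{factker} one level up), and the elementary diagram chase showing that surjectivity of $\pi_E,\pi_F$ plus $\ker(\mu_E\circ\pi_E)\subseteq\ker\pi_E+\ker\pi_F$ yields surjectivity of $\Phi$. Neither is a gap; both are routine.
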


\noindent The following two lemmas are crucial for proving Theorem~\ref{mainthm}.

\begin{lemma}\label{capker}
Let $E\cup F$ be a regular union graph. Then
\[
\ker\pi_E\cap\ker\pi_F=\{0\},
\]
where $\pi_E:C^*(E\cup F)\to C^*(E)$ and $\pi_F:C^*(E\cup F)\to C^*(F)$ are $*$-homomorphisms in the diagram~\eqref{c*diaginj}.
\end{lemma}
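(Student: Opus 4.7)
The plan is to identify the kernels $\ker\pi_E$ and $\ker\pi_F$ with ideals attached to explicit admissible pairs via~\cref{factker}, and then show that their intersection corresponds to the admissible pair of the zero ideal, so that the bijection in~\cref{idealclass} delivers the conclusion. Since $E\cup F$ is a regular union,~\cref{adjfinal1} ensures that both $E$ and $F$ are regular closed subgraphs of $E\cup F$, and \cref{factker} applied to the inclusions $E\hookrightarrow E\cup F$ and $F\hookrightarrow E\cup F$ yields
\[
\ker\pi_E=I_{(E^0,\,E^0_{\rm sing})}\qquad\text{and}\qquad \ker\pi_F=I_{(F^0,\,F^0_{\rm sing})},
\]
where the admissible pairs are taken inside $(E\cup F)^0$. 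The intersection $\ker\pi_E\cap\ker\pi_F$ is gauge-invariant as the intersection of two gauge-invariant ideals, so by~\cref{idealclass} it equals $I_{(Y^0,Z^0)}$ for a unique admissible pair $(Y^0,Z^0)$ in $(E\cup F)^0$ that I would determine.

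The second step is to compute $(Y^0,Z^0)$ explicitly from~\eqref{admideal}. Using~\cref{functc*} together with the formula~\eqref{precomp}, the compositions $\pi_E\circ t^0$ and $\pi_F\circ t^0$ are, up to $t^0_E$ and $t^0_F$, the restriction maps $f\mapsto f|_{E^0}$ and $f\mapsto f|_{F^0}$, respectively. Combined with the standard injectivity of the vertex representations $t^0_E$ and $t^0_F$ of topological graph C*-algebras, this shows that $t^0(f)\in\ker\pi_E\cap\ker\pi_F$ is equivalent to $f$ vanishing on $E^0\cup F^0=(E\cup F)^0$, i.e., to $f=0$. Hence the defining condition of $Y^0$ becomes vacuous, yielding $Y^0=(E\cup F)^0$; the admissibility constraint $Y^0_{\rm sing}\subseteq Z^0\subseteq (E\cup F)^0_{\rm sing}\cap Y^0$ then forces $Z^0=(E\cup F)^0_{\rm sing}$.

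Finally, the pair $\bigl((E\cup F)^0,\,(E\cup F)^0_{\rm sing}\bigr)$ is exactly the admissible pair attached to the zero ideal, as the analogous application of~\eqref{admideal} (again via injectivity of $t^0$) shows, so the bijection in~\cref{idealclass} delivers $\ker\pi_E\cap\ker\pi_F=\{0\}$. The only potentially delicate point is the reduction of $t^0(f)\in\ker\pi_E\cap\ker\pi_F$ to $f=0$, which relies on the faithfulness of the vertex maps and on identifying the restriction homomorphisms $\mu^0_E$ and $\mu^0_F$ as the genuine pointwise restrictions to the closed subsets $E^0,F^0\subseteq(E\cup F)^0$; everything else is a formal consequence of~\cref{factker}, \cref{idealclass}, \cref{adjfinal1}, and \cref{functc*}.
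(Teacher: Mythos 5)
Your proof is correct and shares the paper's overall skeleton (gauge-invariance of $\ker\pi_E\cap\ker\pi_F$, \cref{factker}, and the bijection of \cref{idealclass}), but the key middle step is handled by a genuinely different mechanism. The paper identifies the admissible pair of the intersection as the \emph{union} of the pairs $(E^0,E^0_{\rm sing})$ and $(F^0,F^0_{\rm sing})$ by invoking Katsura's lattice-theoretic results (Propositions~2.10, 2.12 and~2.13 of the reference \texttt{katsura3}), and then checks that $E^0\cup F^0=(E\cup F)^0$ and $E^0_{\rm sing}\cup F^0_{\rm sing}=(E\cup F)^0_{\rm sing}$. You instead compute the $Y^0$-component directly from~\eqref{admideal}: since $\pi_E\circ t^0_\cup$ and $\pi_F\circ t^0_\cup$ are the restrictions to $E^0$ and $F^0$ followed by the injective maps $t^0_E$ and $t^0_F$, the only $f$ with $t^0_\cup(f)$ in the intersection is $f=0$, whence $Y^0=(E\cup F)^0$; the admissibility sandwich $Y^0_{\rm sing}\subseteq Z^0\subseteq (E\cup F)^0_{\rm sing}\cap Y^0$ then pins down $Z^0=(E\cup F)^0_{\rm sing}$, and injectivity of $I\mapsto\rho_I$ finishes the argument. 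Your route is more self-contained (it avoids the external propositions on unions of admissible pairs and, incidentally, makes the initial appeal to \cref{factker} largely redundant), at the cost of being an ad hoc computation rather than an instance of the general lattice correspondence. One small imprecision: for the zero ideal, injectivity of $t^0_\cup$ only gives the $Y^0$-component; the identification $Z^0_{\{0\}}=(E\cup F)^0_{\rm sing}$ should again be justified by the admissibility sandwich (or by the fact that $t^0_\cup(f)\in (t^1_\cup)^{(1)}(\mathcal{K}(X_{E\cup F}))$ exactly when $f\in C_0((E\cup F)^0_{\rm reg})$), not by injectivity of $t^0_\cup$ alone — but this does not affect the validity of the proof.
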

\begin{proof}
First, note that $\ker\pi_E\cap\ker\pi_F$ is a~gauge-invariant ideal. Hence, $\ker\pi_E\cap\ker\pi_F=I_\rho$ for some admissible pair $\rho=(Y^0,Z^0)$ due to Theorem~\ref{idealclass}.
From Lemma~\ref{factker}, we conclude that 
$\ker\pi_E=I_{\rho_E}$ and $\ker\pi_F=I_{\rho_F}$ for the admissible pairs $\rho_E=(E^0,E^0_{\rm sing})$ and $\rho_F=(F^0,F^0_{\rm sing})$. Now, \cite[Proposition~2.13]{katsura3} implies that \[
(Y^0,Z^0)=\rho=\rho_E\cup\rho_F=(E^0\cup F^0, E^0_{\rm sing}\cup F^0_{\rm sing}).
\]
Combining~\cite[Proposition~2.10]{katsura3} and~\cite[Proposition~2.12]{katsura3}, we infer that
\[
(E\cup F)^0_{\rm sing}\subseteq E^0_{\rm sing}\cup F^0_{\rm sing}\subseteq (E\cup F)^0\cap (E\cup F)^0_{\rm sing}=(E\cup F)^0_{\rm sing}.
\]
Hence, $\rho=((E\cup F)^0,(E\cup F)^0_{\rm sing})$. It follows that $\ker\pi_E\cap\ker\pi_F=\{0\}$.
\end{proof}

\begin{lemma}\label{reglemma}
Let $E\cup F$ be a regular union graph such that 
\begin{equation}\label{breaking}
E^0_{\rm sing}\cap (E\cap F)^0_{\rm reg}\subseteq  F^0_{\rm reg}. 
\end{equation}
Then
	$\ker\mu_E\subseteq \pi_E(\ker \pi_F)$, where $\mu_E$, $\pi_E$, and $\pi_F$ are the $*$-homomorphisms in the diagram~\eqref{c*diaginj}.
	\end{lemma}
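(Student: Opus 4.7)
The plan is to identify $\pi_E(\ker\pi_F)$ with its admissible pair and compare it to the admissible pair for $\ker\mu_E$ given by \Cref{factker}. Since $\pi_E$ is a gauge-equivariant surjection and $\ker\pi_F$ is gauge-invariant, $\pi_E(\ker\pi_F)$ is a closed gauge-invariant ideal of $C^*(E)$, so by \Cref{idealclass} it equals $I_\rho$ for some admissible pair $\rho=(Y^0,Z^0)$. By \Cref{factker} one has $\ker\mu_E=I_{((E\cap F)^0,(E\cap F)^0_{\rm sing})}$, and the inclusion-reversing bijection of \Cref{idealclass} reduces the lemma to verifying
\[
Y^0\subseteq (E\cap F)^0\qquad\text{and}\qquad Z^0\subseteq (E\cap F)^0_{\rm sing}.
\]

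For the first containment, given $v\in E^0\setminus(E\cap F)^0$ I would use Urysohn to pick $f\in C_c(E^0\setminus(E\cap F)^0)\subseteq C_0(E^0)$ with $f(v)\neq 0$, extend $f$ by zero across $F^0$ to $\tilde f\in C_0((E\cup F)^0)$ (well-defined since $f$ vanishes on the common intersection $(E\cap F)^0$ and $E^0,F^0$ are closed in the pushout), and observe that $t^0_{E\cup F}(\tilde f)\in\ker\pi_F$ while $\pi_E(t^0_{E\cup F}(\tilde f))=t^0_E(f)$, giving $v\notin Y^0$. For the second containment, pick $v\in E^0\setminus(E\cap F)^0_{\rm sing}$. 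If $v\notin(E\cap F)^0$ the previous step already yields $v\notin Y^0\supseteq Z^0$, so I assume $v\in(E\cap F)^0_{\rm reg}$. If $v\in E^0_{\rm reg}$, any $f\in C_c(E^0_{\rm reg})$ with $f(v)\neq 0$ lies in the Katsura ideal $J_{X_E}$, so the covariance relation gives $t^0_E(f)=t^{(1)}_E(\varphi_E(f))\in(t^1_E)^{(1)}(\mathcal{K}(X_E))$ and $v\notin Z^0$. The only remaining case is $v\in E^0_{\rm sing}$, where the hypothesis~\eqref{breaking} forces $v\in F^0_{\rm reg}$.

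In this last case I would choose $g\in C_c(F^0_{\rm reg})$ with $g(v)\neq 0$, so $t^0_F(g)=t^{(1)}_F(\varphi_F(g))$ in $C^*(F)$, Tietze-extend $g$ to $\tilde g\in C_c((E\cup F)^0)$ with $h:=\tilde g|_{E^0}\in C_c(E^0)$ satisfying $h(v)=g(v)\neq 0$, approximate $\varphi_F(g)\in\mathcal{K}(X_F)$ by finite sums $\sum_i\theta_{\xi_i,\eta_i}$ with $\xi_i,\eta_i\in C_c(F^1)$, and lift each $\xi_i,\eta_i$ to $\tilde\xi_i,\tilde\eta_i\in X_{E\cup F}$ by Tietze-extending $\xi_i|_{G^1},\eta_i|_{G^1}$ to compactly supported functions on $E^1$. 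Regularity of $G$ in both $E$ and $F$, which gives $r_E^{-1}(G^0)=r_F^{-1}(G^0)\subseteq G^1$, is what ensures the resulting $C_b$-functions on $(E\cup F)^1$ have $C_0$-valued inner product. Assembling $b=t^0_{E\cup F}(\tilde g)-\sum_i t^1_{E\cup F}(\tilde\xi_i)t^1_{E\cup F}(\tilde\eta_i)^*$, one has $\pi_F(b)=t^0_F(g)-t^{(1)}_F(\sum_i\theta_{\xi_i,\eta_i})$ (which is zero when the approximation is exact) and $\pi_E(b)=t^0_E(h)-\sum_i t^1_E(\tilde\xi_i|_{E^1})t^1_E(\tilde\eta_i|_{E^1})^*$, with the second term living in $(t^1_E)^{(1)}(\mathcal{K}(X_E))$. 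This produces $t^0_E(h)\in\pi_E(\ker\pi_F)+(t^1_E)^{(1)}(\mathcal{K}(X_E))$ and hence $v\notin Z^0$.

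The principal obstacle will be making the lifting in the last case exact rather than merely approximate: in general $\varphi_F(g)$ is only the norm limit of finite sums of rank-one operators, so one must either shrink the support of $g$ so that the local-homeomorphism behaviour of $r_F$ at the regular vertex $v$ supplies a genuinely finite rank-one expansion, or pass to a limit argument that exploits closedness of $\pi_E(\ker\pi_F)$ in $C^*(E)$ (as the image of a closed ideal under a surjective $*$-homomorphism) to absorb the tail of the approximation into the ambient ideal.
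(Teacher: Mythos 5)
Your overall strategy is the paper's: reduce to comparing admissible pairs via \Cref{idealclass}, and your arguments for $Y^0_{\pi_E(\ker\pi_F)}\subseteq (E\cap F)^0$ (extension by zero across $F^0$) and for the easy subcases of the $Z^0$-containment are sound and essentially match the paper. The divergence is in the only hard case, $v\in E^0_{\rm sing}\cap(E\cap F)^0_{\rm reg}\subseteq F^0_{\rm reg}$, where you try to build an explicit element of $\ker\pi_F+t^{(1)}_\cup(\mathcal{K}(X_{E\cup F}))$ by lifting a rank-one decomposition of $\varphi_F(g)$ from $X_F$ to $X_{E\cup F}$; as you note yourself, this leaves the exactness of the decomposition unresolved. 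The paper avoids the lifting entirely: since \Cref{factker} already identifies the admissible pair of $\ker\pi_F$ as $(F^0,F^0_{\rm sing})$, the defining property of the $Z^0$-component gives directly that
\[
t^0_\cup\bigl(C_0((E\cup F)^0\setminus F^0_{\rm sing})\bigr)\subseteq \ker\pi_F+t^{(1)}_\cup(\mathcal{K}(X_{E\cup F})),
\]
so one only needs to choose $f\in C_0((E\cup F)^0)$ vanishing on $F^0_{\rm sing}$ with $f(v)\neq 0$ and push forward with $\pi_E$, using $\pi_E(t^{(1)}_\cup(\mathcal{K}(X_{E\cup F})))\subseteq t_E^{(1)}(\mathcal{K}(X_E))$, to contradict $v\in Z^0_{\pi_E(\ker\pi_F)}$. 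In other words, the abstract ideal machinery you are already invoking for the reduction also does the constructive work for you. For the record, both of your proposed repairs are viable: for $g\in C_c(F^0_{\rm reg})$ Katsura's argument (cover $s_F^{-1}(\operatorname{supp}g)$ by finitely many open sets on which $r_F$ is injective and use a partition of unity) produces a genuinely \emph{finite} sum $\varphi_F(g)=\sum_i\theta_{\xi_i,\eta_i}$ with $\xi_i,\eta_i\in C_c(F^1)$, and alternatively $\pi_E(\ker\pi_F)+t_E^{(1)}(\mathcal{K}(X_E))$ is closed (sum of a closed ideal and a C*-subalgebra), so the approximation tail can be absorbed. But carrying either out is strictly more work than the paper's two-line use of $Z^0_{\ker\pi_F}=F^0_{\rm sing}$.
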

	\begin{proof}
	By Lemma~\ref{factker}, $\ker\mu_E$ and $\ker \pi_F$ are gauge-invariant ideals with the associated admissible pairs $((E\cap F)^0,(E\cap F)^0_{\rm sing})$ and $(F^0,F^0_{\rm sing})$, respectively. Since $\pi_E$ is surjective and gauge-equivariant, $\pi_E(\ker\pi_F)$ is a gauge-invariant ideal. Therefore, by the classification of gauge-invariant ideals (see~Theorem~\ref{idealclass}), it suffices to prove that
	\[
	(Y^0_{\pi_E(\ker \pi_F)},Z^0_{\pi_E(\ker \pi_F)})\subseteq ((E\cap F)^0,(E\cap F)^0_{\rm sing}),
	\]
where $Y^0_{\pi_E(\ker \pi_F)}$ and $Z^0_{\pi_E(\ker \pi_F)}$ are given by~\eqref{admideal}. 

First, take a vertex $v\in Y^0_{\pi_E(\ker\pi_F)}$. Since $Y^0_{\ker\mu}=(E\cap F)^0$ by Lemma~\ref{factker}, it suffices to show that $f(v)=0$ for all $f\in C_0(E^0)$ such that $t_E^0(f)\in\ker\mu_E$. To this end, take any $f\in C_0(E^0)$ satisfying $t_E^0(f)\in\ker\mu_E$. By Tietze extension theorem, there is $f'\in C_0((E\cup F)^0)$ such that $f'|_{E^0}=f$ and $f'|_{F^0\setminus (E\cap F)^0}=0$.
Let $w\in F^0$ and consider $f'(w)$. If $w\in F^0\setminus (E\cap F)^0$, then $f'(w)=0$. Furthermore, if $w\in (E\cap F)^0$, then $f'(w)=f(w)=0$ because $t_E^0(f)\in\ker\mu_E$. Therefore, $f'|_{F^0}=0$. Since $\pi_F(t_\cup^0(f'))=t_F^0(f'|_{F^0})$, we conclude that $t_\cup^0(f')\in\ker\pi_F$. In turn, we get that \[
t_E^0(f)=t_E^0(f'|_{E^0})=\pi_E(t_\cup^0(f'))\in\pi_E(\ker\pi_F).
\] 
Hence, $f(v)=0$ and we are done.

Next, we prove that 
	\[
	Z_{\pi_E(\ker \pi_F)}\subseteq (E\cap F)^0_{\rm sing}.
	\]
	By contradiction, suppose that there exists $v\in Z_{\pi_E(\ker \pi_F)}$ such that $v\notin (E\cap F)^0_{\rm sing}$. Since
	\[
	Z_{\pi_E(\ker \pi_F)}\subseteq E^0_{\rm sing}\cap Y_{\pi_E(\ker \pi_F)}\subseteq E^0_{\rm sing}\cap (E\cap F)^0,
	\]
	we conclude that 
	$v\in E^0_{\rm sing}\cap (E\cap F)^0_{\rm reg}$,
	which implies that $v\in F^0_{\rm reg}$ by the assumption~\eqref{breaking}. Consequently, $v\notin F^0_{\rm sing}$. 
Since $(E\cup F)^0$ is locally compact Hausdorff (and hence Tychonoff), $F^0_{\rm sing}$ is closed, and $v\notin F^0_{\rm sing}$, there exists a function $f\in C_0((E\cup F)^0)$ such that $f(w)=0$ for all $w\in F^0_{\rm sing}$ and $f(v)\neq 0$. Since $Z_{\ker \pi_F}=F^0_{\rm sing}$, we have that
	\[
	t^0_\cup(C_0((E\cup F)^0\setminus F^0_{\rm sing}))=(\ker \pi_F+t^{(1)}_\cup(\mathcal{K}(X_{E\cup F})))\cap t^0_\cup (C_0((E\cup F)^0)).
	\]
	Since $f\in C_0((E\cup F)^0\setminus F^0_{\rm sing})$, we infer that
	\[
	t^0_\cup(f)\in\ker \pi_F+t_\cup^{(1)}(\mathcal{K}(X_{E\cup F})).
	\]
	Now, notice that
	\[
	\pi_E (t_\cup^{(1)}(\mathcal{K}(X_{E\cup F})))=t_E^{(1)}(\pi_E^{(1)}(\mathcal{K}(X_{E\cup F})))\subseteq t_E^{(1)}(\mathcal{K}(X_E)).
	\]
	Subsequently, we obtain that
	\[
	t_E^0(f|_{E^0})=\pi_E(t^0_\cup(f))\in \pi_E(\ker \pi_F+t_\cup^{(1)}(\mathcal{K}(X_{E\cup_mF})))\subseteq \pi_E(\ker \pi_F)+t_E^{(1)}(\mathcal{K}(X_E)).
	\]
	However, $f(v)\neq 0$, which contradicts the fact that $v\in Z_{\pi_E(\ker\pi_F)}$.
	\end{proof}
	\begin{remark}\label{remarksing}
	Note that $E^0_{\rm sink}\cap (E\cap F)^0_{\rm reg}=\emptyset$ so it suffices to assume that \[
	\overline{E^0_{\rm sink}}\setminus E^0_{\rm sink}\cap E^0_{\rm inf}\cap (E\cap F)^0_{\rm reg}\subseteq F^0_{\rm reg}.\]
	\end{remark}

We are now ready to complete the proof of the main result.
\begin{proof}
(of Theorem~\ref{mainthm}). From~\cite[Proposition~3.1]{pedersen} it follows that the diagram~\eqref{c*diaginj} is a pullback if and only if
\begin{enumerate}
\item[(P1)] $\ker\pi_E\cap\ker\pi_F=\{0\}$,
\item[(P2)] $\mu_F^{-1}(\mu_E(C^*(E)))=\pi_F(C^*(E\cup F))$,
\item[(P3)] $\pi_E(\ker\pi_F)=\ker\mu_E$.
\end{enumerate}
The condition (P1) follows from Lemma~\ref{capker}. The condition (P2) is true due to the commutativity of the diagram~\eqref{c*diaginj} and the surjectivity of $\mu_E$ and $\pi_F$. Finally, the condition (P3) follows from the commutativity of the diagram and the condition~\eqref{maincond} combined with Lemma~\ref{reglemma}. 
\end{proof}

\begin{remark}
We will not pursue it here but one might prove that the C*-correspondence of a~regular union graph $E\cup F$ is always a pullback of the C*-correspondences of $E$ and $F$ over the C*-correspondence of $E\cap F$. Therefore, an alternative way of proving Theorem~\ref{mainthm} would be to use~\cite[Theorem~3.3]{rs-11} stating under which conditions two-surjective pullbacks of C*-correspondences give rise to two-surjective pullbacks of their Cuntz--Pimsner algebras. However, in the case of C*-correspondences coming from topological graphs, these conditions are rather restrictive, namely, one would have to assume that $E^0=E^0_{\rm fin}$, $F^0=F^0_{\rm fin}$, $\overline{s_E(E^1)}$ is clopen in $E^0$, and $\overline{s_F(F^1)}$ is clopen in $F^0$. These conditions ensure that the gauge-invariant ideals of $C^*(E)$ and $C^*(F)$ are in one-to-one correspondence with invariant subsets (cf.~\cite[Corollary~8.23]{mt-05}), i.e. the $Z^0$ component of admissible pairs $(Y^0,Z^0)$ is not needed to classify those ideals. Therefore, our approach gives a more general result for Cuntz--Pimsner algebras coming from topological graphs.
\end{remark}

We end this section, by comparing our main theorems to an analogous result proved by the third author and P.~M.~Hajac in~\cite[Theorem~6.5]{th-24} in the case of directed graphs. The aforementioned paper deals with one-injective pushouts of directed graphs, which are equivalent to adjunction graphs, and the theorem is formulated using breaking vertices. For reader's convenience, we recall this notion here. Let $E$ be a directed graph and let $H$ be a~subset of $E^0$. The set of {\em breaking vertices} with respect to $H$ is defined to be
\[
B_H:=\{v\in E^0\setminus H~:~|s^{-1}_E(v)|=\infty~\text{and}~0<|s_E^{-1}(v)\cap r_E^{-1}(E^0\setminus H)|<\infty\}.
\]
Note that if $F$ is a regular subgraph of $E$, then $B_{E^0\setminus F^0}=E^0_{\rm inf}\cap F^0_{\rm reg}$. Using our notation, \cite[Theorem~6.5]{th-24} reads: Let $E\cup_mF$ be a regular adjunction graph of (discrete) directed graphs such that
\begin{enumerate}
\item[{\rm (D1)}] $m^0$ restricted to $B_{F^0\setminus G^0}$ is injective, and
\item[{\rm (D2)}] $p^0(B_{F^0\setminus G^0})\subseteq B_{(E\cup_mF)^0\setminus E^0}$.
\end{enumerate}
Then, the diagram~\eqref{c*diag} exists and is a pullback diagram in the category of $\mathbb{T}$-C*-algebras and $\mathbb{T}$-$*$-homo\-mor\-phisms, i.e. we have a $\mathbb{T}$-equivariant $*$-isomorphism
\[
C^*(E\cup_mF)\cong C^*(E)\oplus_{C^*(G)}C^*(F).
\]
Note that, the following conditions are equivalent:
\begin{enumerate}
\item[{\rm (1)}] $p^0(B_{F^0\setminus G^0})\subseteq B_{(E\cup_mF)^0\setminus E^0}$,
\item[{\rm (2)}] $E^0_{\rm sing}\cap m(G)^0_{\rm reg}\subseteq p(F)^0_{\rm reg}$.
\end{enumerate}
Therefore, a natural next step is to find a pushout-to-pullback theorem generalizing Theorem~\ref{mainthm} to the case when $m$ might not be injective but assuming a set of conditions analogous to (D1) and (D2).

\section{Examples and applications}

\subsection{Homeomorphism C*-algebras}

A~{\em topological dynamical system} $(X,\sigma)$ consits of a locally compact Hausdorff topological space $X$ and a homeomorphism $\sigma : X \xrightarrow{} X$. The map $\sigma$ defines an automorphism $\alpha$ of $C_0(X)$ given by $\alpha(f)(x) := f(\sigma^{-1}(x))$. The resulting crossed product C*-algebra $C^*(X) \rtimes_{\alpha} \mathbb{Z}$ is called the {\em homoeomorphism C*-algebra}. By Example 2 of \cite{katsura1}, this C*-algebra can be realised as a topological graph algebra with the topological graph given by $E^0 = E^1:= X, \ s := \sigma , \ r :={\rm id}_X $. Note that, since $\sigma$ is a~homeomorphism, the topological graph $E$ is row-finite. In fact, $E^0_{sg} = \emptyset$ and $s(E^1) = E^0 = E^0_{rg}$. First, let use identify regular factor maps between topological graphs coming from topological dynamical systems.

\begin{proposition}
Let $(X,\sigma_X)$ and $(Y,\sigma_Y)$ be topological dynamical systems and let $\phi:X\to Y$ be a continuous map. Then $(\phi,\phi)$ is a regular factor map if and only if
\begin{enumerate}
\item[{\rm (1)}] $\phi$ is proper,
\item[{\rm (2)}] $\phi$ is injective, and
\item[{\rm (3)}] $\phi\circ\sigma_X=\sigma_Y\circ \phi$.
\end{enumerate} 
\end{proposition}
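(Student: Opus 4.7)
My plan is to translate each defining clause of a regular factor map into a condition on $\phi$, exploiting the explicit form of the topological graphs $E$ and $F$ coming from $(X,\sigma_X)$ and $(Y,\sigma_Y)$: namely, $E^0 = E^1 = X$ with $r_E = \mathrm{id}_X$ and $s_E = \sigma_X$, and analogously for $F$. A first key observation is that, since $\sigma_X$ and $\sigma_Y$ are homeomorphisms, $E^0_{\rm sing} = \emptyset = F^0_{\rm sing}$, so in particular every vertex is regular and condition (F3) becomes vacuous.

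For the implication from (1)--(3) to the factor map property, properness of $m^0 = m^1 = \phi$ is exactly (1); the range clause of (F1) holds tautologically because both range maps are identities; and the source clause of (F1) reads $\sigma_Y \circ \phi = \phi \circ \sigma_X$, which is (3). For (F2), given $x\in Y$ and $v\in X$ with $r_Y(x) = \phi(v)$, i.e.\ $x=\phi(v)$, the element $e := v \in X = E^1$ is a lift, since $\phi(e) = \phi(v) = x$ and $r_X(e) = e = v$. Uniqueness is immediate from the constraint $r_X(e) = v$, which pins $e = v$ regardless of whether $\phi$ is injective; the role of (2) here is to ensure that any other candidate edge with the correct range is already excluded.

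Conversely, starting from the assumption that $(\phi,\phi)$ is a regular factor map, properness (1) is part of the definition, and the source clause of (F1) gives (3). The delicate step is extracting injectivity (2) from (F2). My plan is to suppose $\phi(v_1) = \phi(v_2) = x$ with $v_1 \ne v_2$ and apply (F2) to each compatible pair $(x, v_i)$: these produce unique lifts $e_i = v_i$ of $x$, both realized by the single global map $m^1 = \phi$. The coherent realization of the fiberwise bijections $m^1 : r_E^{-1}(v_i) \to r_F^{-1}(m^0(v_i))$ by one function $\phi$, combined with the fact that $r_E, r_F$ are identities so that these fibers are singletons forced to map into the same $\{x\}$, will be shown to force $v_1 = v_2$. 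The cleanest execution is probably to invoke Katsura's equivalent formulation of regularity from \cite[Lemma~2.7]{katsura2}, which phrases (F2) as a genuine lifting/bijection property on edges rather than a bare pointwise statement.

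The main obstacle will be this last step, because a purely set-theoretic reading of (F2) is already trivially satisfied in the dynamical setting (the constraint $r_X(e)=v$ alone pins the lift), so injectivity of $\phi$ has to be drawn out of the finer topological content of the regularity condition and the globality of $m^1 = \phi$ as a single function on $X$, rather than from a naive pointwise argument.
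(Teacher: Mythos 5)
Your forward direction is correct and is essentially the paper's argument: properness is (1), the range half of (F1) is automatic since both range maps are identities, the source half is exactly the intertwining relation (3), (F3) is vacuous because every vertex of a homeomorphism graph is regular, and for (F2) the constraint $r_E(e)=v$ pins the lift to $e=v$. But you have already put your finger on the problem with the converse, and your proposed fix cannot work. There is no ``finer topological content'' to extract from (F2): as stated (and in Katsura's reformulation) it is exactly the assertion that $m^1$ restricts to a bijection $r_E^{-1}(v)\to r_F^{-1}(m^0(v))$ for each $v\in E^0$, and when both range maps are identities every such fiber is a singleton, so the condition holds for \emph{any} proper continuous equivariant $\phi$, injective or not. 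Concretely, take $X=\{a,b\}$ and $Y=\{c\}$ with identity dynamics and $\phi$ the constant map: $(\phi,\phi)$ is proper and satisfies (F1)--(F3), yet $\phi$ is not injective. So the ``only if'' half of item (2) is not provable; the gap in your write-up reflects a gap in the statement itself rather than a missing idea on your part.

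For comparison, the paper's own proof of this step reads: if $(\phi,\phi)$ satisfies (F2) then ``for every $y\in Y$, there is a unique $x\in X$ such that $\phi(x)=\mathrm{id}_Y(y)=y$.'' This drops the second constraint $r_E(e)=v$ from the uniqueness clause of (F2) (and also quantifies over all $y\in Y$ rather than only over those of the form $\phi(v)$, which would yield bijectivity rather than injectivity). With the constraint retained, uniqueness is forced by $r_E=\mathrm{id}_X$ alone and says nothing about $\phi$; this is also consistent with functoriality, since a non-injective proper equivariant $\phi$ still induces a perfectly good $*$-homomorphism $C_0(Y)\rtimes\mathbb{Z}\to C_0(X)\rtimes\mathbb{Z}$. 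So your instinct that a naive pointwise argument cannot produce injectivity is right; the correct conclusion is that $(\phi,\phi)$ is a regular factor map if and only if $\phi$ is proper, continuous and equivariant, and injectivity must be imposed as a separate hypothesis where it is actually needed (as the paper does when forming union graphs).
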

\begin{proof}
Regular factor maps are proper by definition which is equivalent to the condition (1). Next, the condition (F1) is equivalent to the condition (3). Furthermore, (F2) is tantamount to injectivity of $\phi$. Indeed, if $\phi$ is injective, then $(\phi,\phi)$ clearly satisfies the target-bijectivity condition. On the other hand, if $(\phi,\phi)$ is target bijective then, for every $y\in Y$, there is a unique $x\in X$ such that $\phi(x)={\rm id}_Y(y)=y$. Finally, all topological graphs in sight have only regular vertices, so the condition (F3) is automatic.
\end{proof}
\begin{remark}
Notice that in topological dynamics factor maps are assumed to be surjective (see, e.g.~\cite[p.~38]{dv-14}).
\end{remark}

Now, let $(X,\sigma_X)$ and $(Y,\sigma_Y)$ be two dynamical systems, and $m: G \subseteq Y \xrightarrow[]{} X$ be such that $(m,m)$ is a regular factor map. Let $X \cup Y$ be the corresponding union and let $\sigma_{\cup}$ be a~homeomorphism of $X \cup Y$ defined by $\sigma_X$ on $X$ and $\sigma_Y$ on $Y$, respectively. Since $\sigma_X$ and $\sigma_Y$ agree on $G$, $\sigma_\cup$ is well-defined. We have the following commutative diagram of topological spaces:
\[\begin{tikzcd}
	& {X\cup Y} \\
	{X} && {Y} \\
	& {G}
	\arrow[from=2-1, to=1-2]
	\arrow[from=2-3, to=1-2]
	\arrow[from=3-2, to=2-1]
	\arrow[from=3-2, to=2-3]
\end{tikzcd}.\]
Dually, we obtain a pullback diagram in the category of commutative $\mathbb{Z}$-C*-algebras and $\mathbb{Z}$-$*$-homomorphisms:
\[\begin{tikzcd}
	& {C_0(X\cup Y)} \\
	{C_0(X)} && {C_0(Y)} \\
	& {C_0(G)}
	\arrow[from=1-2, to=2-1]
	\arrow[from=1-2, to=2-3]
	\arrow[from=2-1, to=3-2]
	\arrow[from=2-3, to=3-2]
\end{tikzcd}\]
Clearly, the above union $X\cup Y$ defines a regular union topological graph $(X \cup Y, X \cup Y,\sigma_{\cup}, {\rm id}_{X\cup Y})$.
Therefore, due to Theorem~\ref{mainthm}, we have the following pullback of C*-algebras:
\[\begin{tikzcd}
	& {C_0(X\cup_mY)\rtimes_{\alpha_{\cup}}\mathbb{Z}} \\
	{C_0(X)\rtimes_{\alpha_{X}}\mathbb{Z}} && {C_0(Y)\rtimes_{\alpha_{Y}}\mathbb{Z}} \\
	& {C_0(G)\rtimes_{\alpha_{G}}\mathbb{Z}}
	\arrow[from=1-2, to=2-1]
	\arrow[from=1-2, to=2-3]
	\arrow[from=2-1, to=3-2]
	\arrow[from=2-3, to=3-2]
\end{tikzcd}\]
Note that our result recovers a special case of a well-known theorem connecting pullbacks of C*-algebras and the crossed product construction~\cite[Theorem~6.3]{pedersen}.

\subsection{Quantum double suspension, quantum spheres, and quantum balls}

The aim of this section is to prove that C*-algebras associated to odd-dimensional quantum spheres and quantum balls are topological graph C*-algebras. Next, using our main theorem, we will recover the quantum CW-complex structure of odd-dimensional quantum spheres. The case of odd-dimensional spheres was missing in~\cite{hrt-20} and~\cite{th-24} because these papers were limited to the theory of graph C*-algebras. First, we generalize the construction of a quantum double suspension to topological graphs. To this end, we recall the presentation of the Toeplitz algebra as a graph C*-algebra.

Let $\mathcal{T}$ denote the Toeplitz algebra, i.e. the universal C*-algebra generated by an isometry~$S$. Recall that $\mathcal{T}$ is isomorphic with the C*-algebra of the graph $E$ consisting of two vertices $\{v_1,v_2\}$ and two edges $\{e_1,e_2\}$ such that $s_E(e_1)=r_E(e_1)=v_1$,  $s_E(e_2)=v_1$, and $r_E(e_2)=v_2$. We denote the respective projections by $P_1$ and $P_2$ and partial isometries by $S_1$ and $S_2$. The aforementioned isomorphism $\mathcal{T}\to C^*(E)$ is given by the assignment $S\mapsto S_1+S_2$.

The {\em quantum double suspension} $\Sigma^2A$ of a unital C*-algebra $A$ was defined in~\cite[Definition~6.1]{hs-02} using a certain essential extension. Here we only need the following characterization which is equivalent to~\cite[Definition~4.1]{cs-11} (cf.~\cite[Proposition~2.1]{hs-08}): $\Sigma^2 A$ is isomorphic with the C*-subalgebra of $A\otimes\mathcal{T}$ generated by $\{a\otimes P_2~:~a\in A\}$, $1\otimes P_1$, $1\otimes S_1$, and $1\otimes S_2$. The C*-algebra $\mathcal{T}$ is nuclear so there is no need to specify the tensor product. If $A$ is a unital graph C*-algebra $C^*(E)$, then the quantum double suspension is isomorphic to the C*-algebra of a graph $\Sigma^2E$ constructed from $E$~(see~\cite[Proposition~6.2]{hs-02}). We will show that a similar result is true for topological graphs.
\begin{definition}
Let $E$ be a topological graph with a compact vertex space $E^0$. We define the {\em quantum double suspension graph} $\Sigma^2E$ as follows
\[
(\Sigma^2E)^0:=\{w\}\sqcup E^0,\qquad (\Sigma^2E)^1:=\{x\}\sqcup E^0\sqcup E^1,\qquad s_{\Sigma}(x):=w=:r_\Sigma(x),
\]
\[
s_\Sigma(v):=w,\quad r_\Sigma(v):=v, \quad v\in E^0,\qquad s_\Sigma(e):=s_E(e),\quad r_\Sigma(e):=r_E(e),\quad e\in E^1.
\]
\end{definition}
Notice that we chose $E^0$ to be compact to ensure that $C^*(E)$ is unital (see, e.g.~\cite[Proposition~7.1]{katsura2}).

\begin{proposition}
Let $E$ be a topological graph with compact $E^0$. Then there is an isomorphism of $\mathbb{T}$-C*-algebras $C^*(\Sigma^2E)\cong \Sigma^2C^*(E)$.
\end{proposition}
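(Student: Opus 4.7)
The plan is to build a gauge-equivariant $*$-isomorphism $\Psi\colon C^*(\Sigma^2 E) \to \Sigma^2 C^*(E)$ by exhibiting a covariant representation of the C*-correspondence $(X_{\Sigma^2 E}, A_{\Sigma^2 E})$ inside $C^*(E)\otimes\mathcal{T}$, showing that its image is exactly $\Sigma^2 C^*(E)$, and then applying Katsura's gauge-invariant uniqueness theorem.

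First, since $w$ is isolated in $(\Sigma^2 E)^0$ and $E^0$ is compact, $(\Sigma^2 E)^0 = \{w\}\sqcup E^0$ and $(\Sigma^2 E)^1 = \{x\}\sqcup E^0 \sqcup E^1$ are disjoint unions of clopen subsets, which yields direct-sum decompositions
\[
A_{\Sigma^2 E} = \mathbb{C}\delta_w \oplus A_E, \qquad X_{\Sigma^2 E} = \mathbb{C}\chi_x \oplus A_E \oplus X_E,
\]
where the middle summand of $X_{\Sigma^2 E}$ consists of continuous functions on the edges from $w$ to the vertices of $E$ (naturally parametrised by the compact space $E^0$), and the last summand lives on $E^1$. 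Writing elements of $X_{\Sigma^2 E}$ accordingly as $c\chi_x + \xi_Y + \xi_{E^1}$, I define
\[
\psi^0(c\delta_w + f) := c(1\otimes P_1) + t_E^0(f)\otimes P_2,
\]
\[
\psi^1(c\chi_x + \xi_Y + \xi_{E^1}) := c(1\otimes S_1) + t_E^0(\xi_Y)\otimes S_2 + t_E^1(\xi_{E^1})\otimes P_2,
\]
for $c\in\mathbb{C}$, $f,\xi_Y\in A_E$, and $\xi_{E^1}\in X_E$.

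Next, I verify (R1), (R2), and covariance by direct computation. Using the Toeplitz identities $S_i^*S_j = \delta_{ij}P_j$, $P_iS_j = \delta_{i1}S_j$, $P_1P_2=0$, and $P_2^2=P_2$, every cross-term in the expansions of $\psi^1(\xi)^*\psi^1(\eta)$ and of $\psi^0(a)\psi^1(\xi)$ vanishes, and the surviving terms recover $\psi^0(\langle\xi,\eta\rangle_{\Sigma^2 E})$ and $\psi^1(a\cdot\xi)$ after one reads off the range-based inner product on $(\Sigma^2 E)^1$. For covariance, a direct inspection of the above decomposition identifies the Katsura ideal as
\[
J_{X_{\Sigma^2 E}} = \mathbb{C}\delta_w \oplus J_{X_E}.
\]
Compactness of $E^0$ ensures that the indicator $\chi_{E^0}$ lies in $X_{\Sigma^2 E}$, and a short calculation gives
\[
\varphi_{\Sigma^2 E}(\delta_w) = \theta_{\chi_x,\chi_x} + \theta_{\chi_{E^0},\chi_{E^0}}.
\]
Applying $\psi^{(1)}$ produces $1\otimes(S_1S_1^* + S_2S_2^*) = 1\otimes P_1 = \psi^0(\delta_w)$, the key equality $S_1S_1^* + S_2S_2^* = P_1$ being precisely the Cuntz--Pimsner covariance relation for the Toeplitz graph at its (unique) regular vertex. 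For $f\in J_{X_E}$, the covariance for $(\psi^1,\psi^0)$ reduces, after factoring out $P_2$ in the second tensor slot, to the covariance of $(t_E^1,t_E^0)$. Hence $(\psi^1,\psi^0)$ is a covariant representation and induces a gauge-equivariant $*$-homomorphism $\Psi\colon C^*(\Sigma^2 E) \to C^*(E)\otimes\mathcal{T}$.

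Finally, the image of $\Psi$ contains $1\otimes P_1 = \psi^0(\delta_w)$, $1\otimes S_1 = \psi^1(\chi_x)$, $1\otimes S_2 = \psi^1(\chi_{E^0})$ (using $t_E^0(1_{E^0})=1$), and all $t_E^0(f)\otimes P_2$ and $t_E^1(\xi)\otimes P_2$; these generate $\Sigma^2 C^*(E)$, so $\Psi$ maps onto $\Sigma^2 C^*(E)$. For injectivity, equip $\Sigma^2 C^*(E)$ with the $\mathbb{T}$-action inherited from the Toeplitz gauge action on the second tensor factor, with respect to which $\Psi$ is clearly equivariant. Multiplying the vanishing relation $c(1\otimes P_1) + t_E^0(f)\otimes P_2 = 0$ by $1\otimes P_1$ and $1\otimes P_2$ respectively forces $c=0$ and $t_E^0(f)=0$, so faithfulness of $t_E^0$ yields faithfulness of $\psi^0 = \Psi\circ t_{\Sigma^2 E}^0$; Katsura's gauge-invariant uniqueness theorem then yields injectivity of $\Psi$. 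The most delicate step, and the one I expect to require the most care, is the covariance computation at the new vertex $w$: one must recognise $\varphi_{\Sigma^2 E}(\delta_w)$ as the sum of the two rank-one projections $\theta_{\chi_x,\chi_x}$ and $\theta_{\chi_{E^0},\chi_{E^0}}$, so that its image under $\psi^{(1)}$ reproduces exactly the Toeplitz covariance relation $S_1S_1^* + S_2S_2^* = P_1$.
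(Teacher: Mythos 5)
Your construction is the same as the paper's: the same decompositions $A_{\Sigma^2E}\cong\mathbb{C}\delta_w\oplus A_E$ and $X_{\Sigma^2E}\cong\mathbb{C}\chi_x\oplus A_E\oplus X_E$, the same covariant representation into $C^*(E)\otimes\mathcal{T}$ (with $\delta_w\mapsto 1\otimes P_1$, $\chi_x\mapsto 1\otimes S_1$, the middle summand mapping to $t_E^0(\cdot)\otimes S_2$, and $X_E$ mapping to $t_E^1(\cdot)\otimes P_2$), the same covariance check at $w$ via $\varphi_{\Sigma^2E}(\delta_w)=\theta_{\chi_x,\chi_x}+\theta_{\chi_{E^0},\chi_{E^0}}$ and $S_1S_1^*+S_2S_2^*=P_1$, and the same conclusion via surjectivity onto the generators of $\Sigma^2C^*(E)$ plus the gauge-invariant uniqueness theorem applied to the injective $\psi^0$.

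One point needs correcting: the $\mathbb{T}$-action on $\Sigma^2C^*(E)$ cannot be taken to be $\mathrm{id}\otimes\beta$ coming only from the Toeplitz gauge action on the second tensor factor, because that action fixes $t_E^1(\xi)\otimes P_2=\psi^1(0,0,\xi)$, whereas equivariance of $\Psi$ with the gauge action on $C^*(\Sigma^2E)$ requires this element to be scaled by $z$. You must use the restriction to $\Sigma^2C^*(E)$ of the tensor product of the gauge actions on \emph{both} factors $C^*(E)$ and $\mathcal{T}$ (as the paper does); with that action every generator transforms correctly, $\Psi$ is equivariant, and the gauge-invariant uniqueness argument goes through as you describe.
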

\begin{proof}
First, by definition, $A_{\Sigma^2E}\cong \delta_w\mathbb{C}\oplus C(E^0)$ as C*-algebras and $X_{\Sigma^2E}\cong \delta_x\mathbb{C}\oplus X_{E^0}\oplus X_E$ as right Hilbert $C(E^0)$-modules, where $X_{E^0}:=\{\xi|_{E^0}:\xi\in X_{\Sigma^2E}\}\cong C(E^0)$. Next, let $(t^0_E,t^1_E)$ be the universal covariant representation of $(X_E,A_E)$. We define the maps $\psi^0:A_{\Sigma^2E}\to \Sigma^2C^*(E)$ and $\psi^1:X_{\Sigma^2E}\to \Sigma^2C^*(E)$ by (the linear extension of) the formulas
\[
\psi^0(\delta_w,0):=1\otimes P_1,\qquad \psi^0(0,f):=t_E^0(f)\otimes P_2,
\]
\[
\psi^1(\delta_x,0,0):=1\otimes S_1,\qquad \psi^1(0,f,0):=t_E^0(f)\otimes S_2,\qquad \psi^1(0,0,\xi):=t_E^1(\xi)\otimes P_2.
\]
By our definition of $\Sigma^2C^*(E)$, the maps $\psi^0$ and $\psi^1$ are well defined. Since $P_1$ and $P_2$ are orthogonal and selfadjoint, $\psi^0$ is \mbox{a~$*$-homo}\-morphism. Also, $\psi^1$ is clearly linear. Furthermore, the fact that $(t^0_E,t^1_E)$ is a representation of $(X_E,A_E)$ on $C^*(E)$ combined with the relations between the generators of $\mathcal{T}$ implies that $(\psi^1,\psi^0)$ is a representation of $(X_{\Sigma^2E},A_{\Sigma^2E})$ on $\Sigma^2C^*(E)$. 

Note that $w$ is regular in $\Sigma^2E$ because $s_\Sigma^{-1}(w)=\{x\}\sqcup E^0$ is compact and nonempty. Now, observe that
\begin{align*}
\psi^0(\delta_w,0)&=1\otimes P_1=1\otimes S_1S_1^*+1\otimes S_2S_2^*=\psi^1(\delta_x,0)\psi^1(\delta_w,0)^*+\psi^1(0,1,0)\psi^1(0,1,0)^*\\
&=\psi^{(1)}(\theta_{(\delta_x,0),(\delta_x,0)}+\theta_{(0,1,0),(0,1,0)}),
\end{align*}
which implies that $\psi^0(\delta_w,0)\in\psi^{(1)}(\mathcal{K}(X_{\Sigma^2E}))$. It follows from~\cite[Proposition~2.11]{katsura1} that $\psi^0(\delta_w,0)=\psi^{(1)}(\varphi_{\Sigma^2E}(\delta_w,0))$. Let $f\in C_0(E^0_{\rm reg})$ and let $\varphi_E(f)\approx \sum_{i=1}^n\theta_{\xi_i,\eta_i}$. Then
\[
\psi^0(0,f)=t_E^{(1)}(\varphi_E(f))\otimes P_2
\approx \sum_{i=1}^n (t_E^1(\xi_i)\otimes P_2)(t_E^1(\eta_i)\otimes P_2)^*=\sum_{i=1}^n \psi^{(1)}(0,0,\xi_i) \psi^{(1)}(0,0,\eta_i)^*.
\]
Again, by~\cite[Proposition~2.11]{katsura1}, we conclude that $\psi^0(0,f)=\psi^{(1)}(\varphi_{\Sigma^2E}(0,f))$. Hence, the representation $(\psi^1,\psi^0)$ is covariant. 

By universality, we obtain a $*$-homomorphism $\psi:C^*(\Sigma^2E)\to \Sigma^2C^*(E)$. From the definition of $(\psi^1,\psi^0)$, we see that it is surjective. Finally, the  $\mathbb{T}$-action on $\Sigma^2C^*(E)\subseteq C^*(E)\otimes\mathcal{T}$ given by the restriction of the tensor product of the gauge actions on $C^*(E)$ and $\mathcal{T}$ makes $\psi$ a $\mathbb{T}$-equivariant $*$-homomorphism. Since $\psi^0$ is injective, the gauge-invariant uniqueness theorem~\cite[Theorem~4.5]{katsura1} implies that $\psi$ is injective.
\end{proof}

Given a regular factor map $m:G\to E$, we define $\Sigma^2m:\Sigma^2G\to \Sigma^2E$ as follows
\begin{align}
(\Sigma^2m)^0(w_G):=w_E,\quad (\Sigma^2m)^0(h)&:=m^0(h),\qquad h\in G^0,\\
(\Sigma^2m)^1(x_G):=x_E,\quad (\Sigma^2m)^1(h):=m^0(h),&\quad (\Sigma^2m)^1(g):=m^1(g),\qquad h\in G^0,~g\in G^1.\nonumber
\end{align}
Here $w_E$ and $w_F$ are the additional vertices in $\Sigma^2E$ and $\Sigma^2F$, respectively. Notice that $\Sigma^2m$ is a~regular factor map. In particular, if $G$ is a regular closed subgraph of $F$, then $\Sigma^2G$ is a regular closed subgraph of $\Sigma^2F$. A {\em graph isomorphism} of topological graphs is a pair of homeomorphisms between of vertex and edges spaces satisfying the condition (F1) from the definition of a regular factor map.
\begin{proposition}\label{itsus}
Let $E$ and $F$ be topological graphs with compact vertex and edge spaces, let $G$ be a regular closed subgraph of $F$, and let $m:G\to E$ be a regular factor map. Then we have the following isomorphism of topological graphs
\[
\Sigma^2E\cup_{\Sigma^2m}\Sigma^2F\cong\Sigma^2(E\cup_mF).
\]
\end{proposition}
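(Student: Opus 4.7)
The plan is to construct the isomorphism directly by exhibiting mutually inverse bijections on vertex and edge spaces and verifying that they are homeomorphisms intertwining source and range maps. The conceptual point is that both sides arise from two pushout constructions---the suspension $\Sigma^2$ and the adjunction $\cup_m$---performed on the same collection of compact Hausdorff spaces in opposite orders, so the result ultimately follows from the fact that these two pushouts commute.

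Concretely, I would first observe that $(\Sigma^2 E\cup_{\Sigma^2 m}\Sigma^2 F)^0$ is the quotient of the disjoint union $(\{w_E\}\sqcup E^0)\sqcup(\{w_F\}\sqcup F^0)$ by the relations $w_F\sim w_E$ (coming from $(\Sigma^2 m)^0(w_G)=w_E$ combined with the identification $w_G=w_F$ inside $\Sigma^2 G\subseteq\Sigma^2 F$) and $h\sim m^0(h)$ for $h\in G^0$. On the other side, $(\Sigma^2(E\cup_m F))^0=\{w\}\sqcup(E^0\cup_{m^0} F^0)$ is the quotient of the same disjoint union by exactly these relations, imposed in the opposite order. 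Since all constituent spaces are compact Hausdorff and both procedures produce the same partition, the canonical set-theoretic bijection is a homeomorphism. An analogous argument handles the edge spaces, which are quotients of $(\{x_E\}\sqcup E^0\sqcup E^1)\sqcup(\{x_F\}\sqcup F^0\sqcup F^1)$ by the identifications $x_F\sim x_E$, $h\sim m^0(h)$ for $h\in G^0$, and $g\sim m^1(g)$ for $g\in G^1$.

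It then remains to verify that source and range maps agree under this bijection. The loop $x$ at $w$ is sent to itself; each suspension edge $v\in E^0\cup_{m^0} F^0$ has source $w$ and range $v$ on both sides; and each original edge in $E^1\cup_{m^1} F^1$ inherits its source and range from the formulas of Definition~\ref{adjgraph} on both sides. The one point requiring care is that $s_\cup$ on edges in $\Sigma^2 F\setminus\Sigma^2 G$ involves composition with the canonical quotient map $p^0$ into $(\Sigma^2 E\cup_{\Sigma^2 m}\Sigma^2 F)^0$, and one must confirm that this coincides with the analogous composition used to define $s_\cup$ in $\Sigma^2(E\cup_m F)$; this is immediate from the commutativity of the two pushout operations established above.

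The main obstacle is notational rather than conceptual: one must keep careful track of how the distinguished vertices $w_G,w_E,w_F$ (and their edge counterparts $x_G,x_E,x_F$) are successively collapsed, and of the fact that elements of $G^0$ appear simultaneously as vertices and as suspension edges, so that the single identification $h\sim m^0(h)$ has to be imposed in two different places at once. Once this combinatorial picture is laid out cleanly, the isomorphism of topological graphs follows directly from the universal property of pushouts applied in both orders.
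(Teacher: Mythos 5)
Your proposal is correct and follows essentially the same route as the paper: both identify the two graphs via the canonical bijections on vertex and edge spaces induced by the compatible maps out of the constituent pieces, check the source/range compatibility (including the $p^0$ subtlety for edges outside the subgraph), and conclude the maps are homeomorphisms using compactness of all spaces involved. The only cosmetic difference is that the paper builds the comparison map through the universal property of the adjunction pushout and then checks bijectivity, whereas you describe both sides directly as quotients of the same disjoint union by the same partition; these are two phrasings of the same argument.
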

\begin{proof}
Define the maps $c_E^0:(\Sigma^2E)^0\to \Sigma^2(E\cup_mF)^0$ and $c^1_E:(\Sigma^2E)^1\to (\Sigma^2(E\cup_mF))^1$ as follows:
\[
c_E^0(w_E):=w_\cup,\quad c^0_E(v):=v,\qquad v\in E^0,
\]
\[
c^1_E(x_E):=x_\cup,\quad c^1_E(v):=v,\quad c^1_E(e)=e,\qquad v\in E^0,~e\in E^1.
\]
Note that we view $E^0$ and $E^1$ as closed subsets of $E^0\cup_{m^0}F^0$ and $E^1\cup_{m^1}F^1$, respectively. Similarly, we define the maps $c_F^0:(\Sigma^2F)^0\to \Sigma^2(E\cup_mF)$ and $c^1_F:(\Sigma^2F)^1\to (\Sigma^2(E\cup_mF))^1$ by the formulas
\[
c^0_F(w_G)=c_F^0(w_F):=w_\cup,\quad c^0_F(q):=p^0_F(q),\qquad q\in F^0,
\]
\[
c^1_F(x_F):=x_\cup,\quad c^1_F(q):=p_F^0(q),\quad c^1_F(f)=p_F^1(f),\qquad q\in F^0,~f\in F^1.
\]
Note that $c^0_E((\Sigma^2m)^0(w_G))=w_\cup=c^0_F(w_G)$ and
\[ c^0_E((\Sigma^2m)^0(h))=c^0_E(m^0(h))=m^0(h)=p_F(h)=
c_F^0(h),\qquad h\in G^0.
\]
Much in the same way, one shows that $c^1_E\circ(\Sigma^2m)^1=c^1_F$. Therefore, we have two continuous maps 
\[
c^0:(\Sigma^2E\cup_{\Sigma^2m}\Sigma^2F)^0\longrightarrow (\Sigma^2(E\cup_mF))^0
\]
\[
c^1:(\Sigma^2E\cup_{\Sigma^2m}\Sigma^2F)^1\longrightarrow (\Sigma^2(E\cup_mF))^1,
\]
such that $c^i|_{(\Sigma^2E)^0}=c_E^i$ and $c^i\circ p_{\Sigma^2F}^i=c_F^i$, $i=0,1$. It is clear that $c^0$ and $c^1$ are bijections and that $c=(c^1,c^0)$ is a graph homomorphism. By compactness of all spaces in sight, we obtain that $c^0$ and $c^1$ are homeomorphisms and that $c$ is a graph isomorphism.
\end{proof}

\subsubsection{Odd-dimensional quantum spheres and balls}

The C*-algebras $C(B^n_q)$, $n\geq 0$, of continuous functions on noncommutative (quantum) balls were defined by Hong and Szyma\'nski in \cite{hs-02} and studied in~\cite{hs-08}. In the original definition, the parameter $q$ is not explicitly involved but one can show that these C*-algebras are $q$-deformations, $q\in[0,1]$, of the commutative C*-algebras $C(B^n)$ of continuous functions on $n$-dimensional balls (see~\cite{hs-08} for details). For $q\in[0,1)$, these algebras are isomorphic.

The C*-algebras $C(B_q^{2n})$, $n\geq 0$, of even-dimensional quantum balls are defined inductively as follows:
\begin{equation*}
    C(B_q^0) = \mathbb{C}, \hspace{1cm} C(B_q^{2n}) = \Sigma^2C(B_q^{2n-2}).
\end{equation*}
$C(B_q^{2n})$ are all graph algebras with the graphs given in~\cite[p.~611]{hs-08}. Similarly, the C*-algebras $C(B_q^{2n+1})$, $n\geq 0$, of odd-dimensional quantum balls are defined as follows:
\begin{equation*}
    C(B_q^1) = C([-1,1]), \hspace{1cm} C(B_q^{2n+1}) = \Sigma^2C(B_q^{2n-1}).
\end{equation*}
However, $C(B_q^{2n+1})$ are known not to be graph C*-algebras~\cite[p.~616]{hs-08}. Here, we will show that they are topological graph C*-algebras.

To begin with, all commutative C*-algebras $C(X)$, where $X$ is a compact Hausdorff space, can be realized as topological graph C*-algebras of a topological graph $E$ with the edge space $E^1=\emptyset$, the vertex space $E^0=X$, and the range and source maps given by the empty maps. Now, using the previous definition of the quantum double suspension and the quantum double suspension graph, $C(B_q^{2n+1})$ can be realized as topological graph C*-algebras whose topological graph is given by iterating the quantum double suspension procedure. For instance, a topological graph for $C(B_q^3)$ is $([-1,1]\sqcup \{v\},[-1,1] \sqcup \{ e \}, s, r) $, where $s(y) = v$ for all $y\in [-1,1]\sqcup \{e\}$, $r(x) = x$ for $x \in [-1,1]$, and $r(e) = v$. A neat way to visualize this is given below.
\begin{center}
\begin{figure}[h!]
    \centering
    \includegraphics[width=0.25\textwidth]{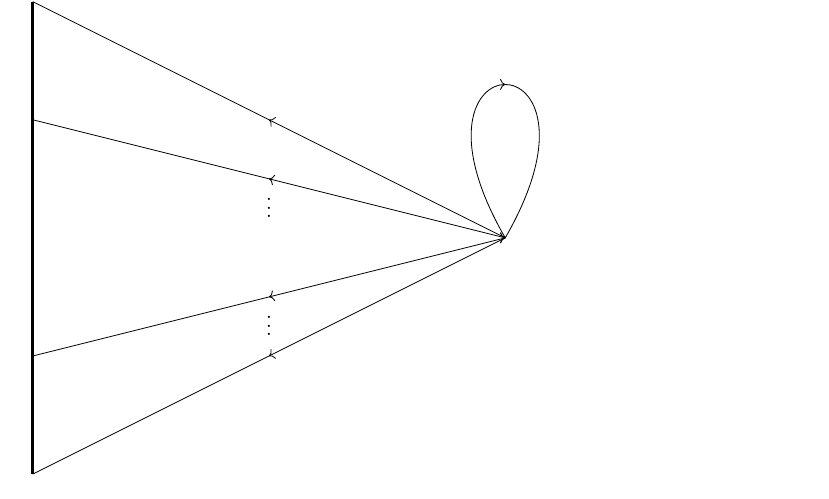}
    \captionsetup{labelformat=empty}
    \caption{Visualization of a topological graph for $C(B_q^3)$}
    \label{Bq3}
\end{figure}
\end{center}
Furthermore, the C*-algebras $C(S^{2n+1}_q)$ of odd-dimensional quantum spheres defined in \cite{hs-02} also can be realised inductively as:
\begin{equation*}
    C(S_q^0) = C(S^0) \hspace{1cm} C(S_q^1) = C(S^1) \hspace{1cm} C(S_q^n) = \Sigma^2C(S_q^{n-2}) 
\end{equation*}
Analogously to the case of $C(B_q^{3})$, a topological graph for $C(S_q^3)$ is shown below.
\begin{center} 
\begin{figure}[htp]
    \centering
    \includegraphics[width=0.25\textwidth]{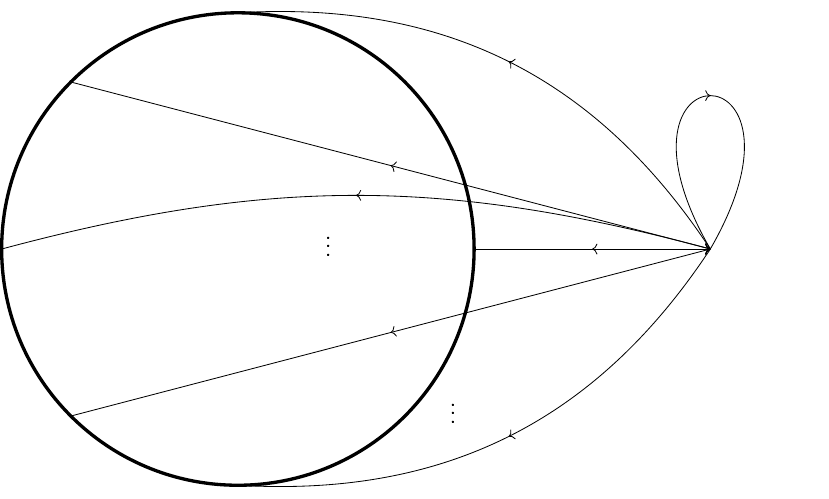}
    \captionsetup{labelformat=empty}
    \caption{Visualization of a topological graph for $C(S_q^3)$}
    \label{Sq3}
\end{figure}
\end{center}
Notice that the topological graph for the even-dimensional noncommutative balls and the even-dimensional quantum spheres agree with their corresponding directed-graph realizations.

With this characterization of the odd-dimensional noncommutative balls and quantum spheres we can construct the following pushout of the topological graphs:
    \[
    \begin{tikzcd}[ampersand replacement=\&]
	\& {\includegraphics[width=0.15\textwidth]{Bq3-2-pages-1.pdf}} \\
	{\includegraphics[width=0.15\textwidth]{Bq3-2-pages-2.pdf}} \&\& {\includegraphics[width=0.15\textwidth]{Bq3-2-pages-2.pdf}} \\
	\& {\includegraphics[width=0.15\textwidth]{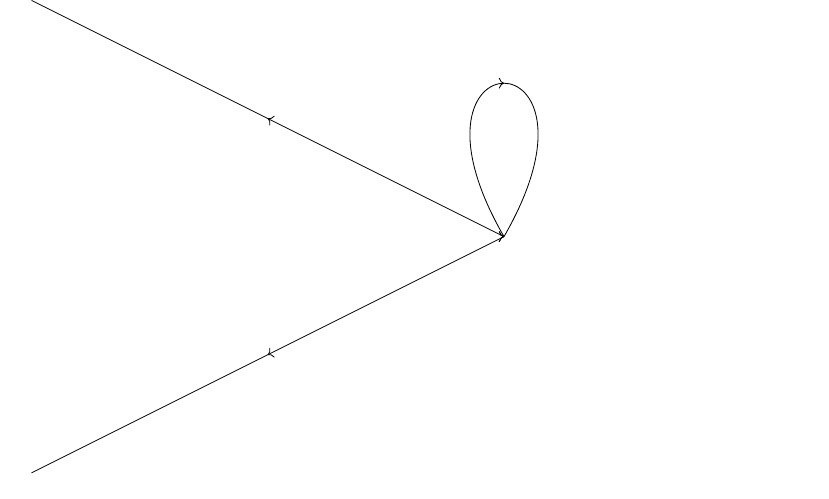}}
	\arrow[hook, from=2-1, to=1-2]
	\arrow[hook', from=2-3, to=1-2]
	\arrow[hook', from=3-2, to=2-1]
	\arrow[hook, from=3-2, to=2-3]
\end{tikzcd}.
\]
Here, all the inclusions and gluings are exactly as they are in the usual pushout construction of $S^1$ with additional vertices and edges remaining as they are. 

In general, one can iterate the above construction and use Proposition~\ref{itsus} to build an adjunction graph for $C(S^{2n+1}_q)$ for every $n\geq 0$. It is straightforward to check that this adjunction graph is regular and satisfies the condition~\ref{maincond} of Theorem~\ref{mainthm}. Therefore, we have the following pullback diagram in the category of $\mathbb{T}$-C*-algebras and $\mathbb{T}$-$*$-homomorphisms:
 \[
 \begin{tikzcd}[ampersand replacement=\&]
	\& {C(S_q^{2n+1})} \\
	{C(B_q^{2n+1})} \&\& {C(B_q^{2n+1})} \\
	\& {C(S_q^{2n+1})}
	\arrow[two heads, from=1-2, to=2-1]
	\arrow[two heads, from=1-2, to=2-3]
	\arrow[two heads, from=2-1, to=3-2]
	\arrow[two heads, from=2-3, to=3-2]
\end{tikzcd}.
\]


\section*{Acknowledgements} 
\noindent
This work is part of the project “Applications of graph algebras and higher-rank graph algebras
in noncommutative geometry” partially supported by NCN grant UMO-2021/41/B/ST1/03387.

The authors are grateful to Piotr M.~Hajac for numerous conversations regarding graph C*-algebras and pushout-to-pullback theorems. A.~Gothe and M.~Tobolski would like to thank the Arizona State University for the for excellent working conditions and great hospitality. A.~Gothe would also like to extend his gratitude to the University of Wroc\l{}aw for their kind hospitality.


\bibliographystyle{plain}
\bibliography{topgraph4}

\begin{thebibliography}{10}

\bibitem{bprs00}
Teresa Bates, David Pask, Iain Raeburn, and Wojciech Szyma\'{n}ski.
\newblock The {$C^*$}-algebras of row-finite graphs.
\newblock {\em New York J. Math.}, 6:307--324, 2000.

\bibitem{bs-24}
Samantha Brooker and Jack Spielberg.
\newblock Relative graphs and pullbacks of relative {T}oeplitz graph algebras.
\newblock {\em J. Operator Theory}, 92(1):25--47, 2024.

\bibitem{cs-11}
Partha~Sarathi Chakraborty and S.~Sundar.
\newblock Quantum double suspension and spectral triples.
\newblock {\em J. Funct. Anal.}, 260(9):2716--2741, 2011.

\bibitem{dhmsz-20}
Francesco D'Andrea, Piotr~M. Hajac, Tomasz Maszczyk, Albert Sheu, and Bartosz
  Zielinski.
\newblock The k-theory type of quantum cw-complexes, 2020.

\bibitem{dv-14}
Jan de~Vries.
\newblock {\em Topological dynamical systems}, volume~59 of {\em De Gruyter
  Studies in Mathematics}.
\newblock De Gruyter, Berlin, 2014.
\newblock An introduction to the dynamics of continuous mappings.

\bibitem{enchilada}
Siegfried Echterhoff, S.~Kaliszewski, John Quigg, and Iain Raeburn.
\newblock A categorical approach to imprimitivity theorems for
  {$C^*$}-dynamical systems.
\newblock {\em Mem. Amer. Math. Soc.}, 180(850):viii+169, 2006.

\bibitem{ek-79}
Hartmut Ehrig and Hans-J\"{o}rg Kreowski.
\newblock Pushout-properties: an analysis of gluing constructions for graphs.
\newblock {\em Math. Nachr.}, 91:135--149, 1979.

\bibitem{hrt-20}
Piotr~M. Hajac, Sarah Reznikoff, and Mariusz Tobolski.
\newblock Pullbacks of graph {$\rm C^*$}-algebras from admissible pushouts of
  graphs.
\newblock In {\em Quantum dynamics---dedicated to {P}rofessor {P}aul {B}aum},
  volume 120 of {\em Banach Center Publ.}, pages 169--178. Polish Acad. Sci.
  Inst. Math., Warsaw, 2020.

\bibitem{th-24}
Piotr~M. Hajac and Mariusz Tobolski.
\newblock From length-preserving pushouts of graphs to one-surjective pullbacks
  of graph algebras.
\newblock {\em J. Noncommut. Geom.}, 2025.
\newblock To appear.

\bibitem{hs-02}
Jeong~Hee Hong and Wojciech Szyma\'{n}ski.
\newblock Quantum spheres and projective spaces as graph algebras.
\newblock {\em Comm. Math. Phys.}, 232(1):157--188, 2002.

\bibitem{hs-08}
Jeong~Hee Hong and Wojciech Szyma\'{n}ski.
\newblock Noncommutative balls and mirror quantum spheres.
\newblock {\em J. Lond. Math. Soc. (2)}, 77(3):607--626, 2008.

\bibitem{katsura1}
Takeshi Katsura.
\newblock A class of {$C^\ast$}-algebras generalizing both graph algebras and
  homeomorphism {$C^\ast$}-algebras. {I}. {F}undamental results.
\newblock {\em Trans. Amer. Math. Soc.}, 356(11):4287--4322, 2004.

\bibitem{katsura0}
Takeshi Katsura.
\newblock On {$C^*$}-algebras associated with {$C^*$}-correspondences.
\newblock {\em J. Funct. Anal.}, 217(2):366--401, 2004.

\bibitem{katsura2}
Takeshi Katsura.
\newblock A class of {$C^*$}-algebras generalizing both graph algebras and
  homeomorphism {$C^*$}-algebras. {II}. {E}xamples.
\newblock {\em Internat. J. Math.}, 17(7):791--833, 2006.

\bibitem{katsura3}
Takeshi Katsura.
\newblock A class of {$C^*$}-algebras generalizing both graph algebras and
  homeomorphism {$C^*$}-algebras. {III}. {I}deal structures.
\newblock {\em Ergodic Theory Dynam. Systems}, 26(6):1805--1854, 2006.

\bibitem{kprr-97}
Alex Kumjian, David Pask, Iain Raeburn, and Jean Renault.
\newblock Graphs, groupoids, and {C}untz-{K}rieger algebras.
\newblock {\em J. Funct. Anal.}, 144(2):505--541, 1997.

\bibitem{kpsw-16}
Alex Kumjian, David Pask, Aidan Sims, and Michael~F. Whittaker.
\newblock Topological spaces associated to higher-rank graphs.
\newblock {\em J. Combin. Theory Ser. A}, 143:19--41, 2016.

\bibitem{mt-05}
Paul~S. Muhly and Mark Tomforde.
\newblock Topological quivers.
\newblock {\em Internat. J. Math.}, 16(7):693--755, 2005.

\bibitem{pedersen}
Gert~K. Pedersen.
\newblock Pullback and pushout constructions in {$C^*$}-algebra theory.
\newblock {\em J. Funct. Anal.}, 167(2):243--344, 1999.

\bibitem{pimsner}
Michael~V. Pimsner.
\newblock A class of {$C^*$}-algebras generalizing both {C}untz-{K}rieger
  algebras and crossed products by {${\bf Z}$}.
\newblock In {\em Free probability theory ({W}aterloo, {ON}, 1995)}, volume~12
  of {\em Fields Inst. Commun.}, pages 189--212. Amer. Math. Soc., Providence,
  RI, 1997.

\bibitem{rsz-04}
Iain Raeburn and Wojciech Szyma\'{n}ski.
\newblock Cuntz-{K}rieger algebras of infinite graphs and matrices.
\newblock {\em Trans. Amer. Math. Soc.}, 356(1):39--59, 2004.

\bibitem{rs-11}
David Robertson and Wojciech Szyma\'{n}ski.
\newblock {$C^\ast$}-algebras associated to {$C^\ast$}-correspondences and
  applications to mirror quantum spheres.
\newblock {\em Illinois J. Math.}, 55(3):845--870 (2013), 2011.

\bibitem{t-td08}
Tammo tom Dieck.
\newblock {\em Algebraic topology}.
\newblock EMS Textbooks in Mathematics. European Mathematical Society (EMS),
  Z\"{u}rich, 2008.

\end{thebibliography}

\Addresses

\end{document}